\theoremstyle{plain}
\newtheorem{theorem}{Theorem}[section]
\newtheorem{lemma}[theorem]{Lemma}
\newtheorem{remark}[theorem]{Remark}
\theoremstyle{definition}
\newtheoremstyle{TheoremNum}
{\topsep}{\topsep}              
{\itshape}                      
{}                              
{\bfseries}                     
{.}                             
{ }                             
{\thmname{#1}\thmnote{ \bfseries #3}}
\newcommand{\fq}{\mathbb{F}_{q}}
\newcommand{\fqn}{\mathbb{F}_{q^n}}
\newcommand{\F}{\mathbb F}
\newcommand{\Z}{\mathbb Z}
\newcommand{\PG}{\mathrm{PG}}
\newcommand{\Aut}{\mathrm{Aut}}
\newcommand{\PGL}{\mathrm{PGL}}
\newcommand{\PGamL}{\mathrm{P\Gamma L}}
\newcommand{\GamL}{\mathrm{\Gamma L}}
\newcommand{\GL}{\mathrm{GL}}
\newcommand{\RN}[1]{%
	\textup{\uppercase\expandafter{\romannumeral#1}}%
}
\newcommand{\CFA}{\mathscr{F}_6}
\newcommand{\CFB}{\mathscr{F}_8}
\newcommand{\CF}{\mathscr{F}}
\def\zhou#1 {\fbox {\footnote {\ }}\ \footnotetext { From Yue: {\color{red}#1}}}
\def\chen#1 {\fbox {\footnote {\ }}\ \footnotetext { From Tang: {\color{blue}#1}}}
\begin{document}
	\title[Equivalence and Automorphism groups of two families of MSLS]{Equivalence and automorphism groups of two families of maximum scattered linear sets}
	\author[W. Tang]{Wei Tang\textsuperscript{\,1}}
	\author[Y. Zhou]{Yue Zhou\textsuperscript{\,1}}
	\address{\textsuperscript{1}Department of Mathematics, National University of Defense Technology, 410073 Changsha, China}
	\email{yue.zhou.ovgu@gmail.com}
	\keywords{linear set; rank-metric code; finite geometry; linearized polynomial}
	\thanks{This is an English translation of the original Chinese version published in Scientia Sinica Mathematica. DOI: 10.1360/SSM-2022-0073}
	\begin{abstract}
		Linear set in projective spaces over finite fields plays central roles in the study of blocking sets, semifields, rank-metric codes and etc. A linear set with the largest possible cardinality and  the maximum rank is called maximum scattered. Despite two decades of study, there are only  a few number of known maximum scattered linear sets in projective lines, including two families constructed in \cite{csajbok_classes_2018}, and \cite{csajbok_newMSLS_2018,marino_classes_2020}, respectively. This paper aims to solve  the equivalence problem of the linear sets in each of these families and to determine their automorphism groups.
	\end{abstract}
	\maketitle
	\section{Introduction}
	
	The concept of \emph{linear set}   was introduced by Lunardon \cite{lunardon_normal_1999}, which is  natural generalization of subgeometrie. In the pasting two decades after this work, linear sets have been intensively investigated and applied to construct and characterize various objects in finite geometry and coding theory, including blocking sets, two-intersection sets, translation spreads of the Cayley generalized Hexagon, translation ovoids of polar spaces, semifields and rank-metric codes. We refer to \cite{bartoli_maximum_2018,lavrauw_field_reduction_2015,polverino_linear_2010,polverino_connections_2020,sheekey_new_2016} and the references therein.
	
	In this paper, we only concern with the linear sets in a projective line.	Let $\Lambda=\PG(V)=\PG(1,q^n)$, where $V$ is a $2$-dimensional vector space over $\F_{q^n}$. Then $\Lambda$ is a projective line. A point set $L$ of $\Lambda$ is said to be an \emph{$\F_q$-linear set} of rank $k$ if it is defined by the non-zero vectors of a $k$-dimensional $\F_q$-vector subspace $U$ of $V$, i.e.
	\[L=L_U:=\{\langle {\bf u} \rangle_{\mathbb{F}_{q^n}} : {\bf u}\in U\setminus \{{\bf 0} \}\}.\]
	For any \emph{$\F_q$-linear set} $L_U$ with rank $k$, it is not difficult to see that
	\begin{equation*}
		|L|\leq \frac{q^{k}-1}{q-1}.
	\end{equation*}
	If the equality holds,  $L_U$ is called a scattered linear set. In \cite{blokhuis_scattered_2000}, Blokhuis and Lavrauw show that a scattered linear set has rank at most $n$, and they call a scattered linear set of rank $k=n$ in $\Lambda$ \emph{maximum}.
	
	Under the action of $\PGamL(2,q^n)$, we can always assume that the linear set $L_U$ of rank $n$ in $\Lambda$ does not contain the point $\langle(0,1)\rangle_{\fqn}$. Consequently, $U$ and $L_U$ can be written as
		\[ U_f=\{ (x,f(x)) \colon x \in \fqn \}, \]
	and
	\[ L_f=\left\{ \langle (x,f(x)) \rangle_{\fqn} \colon x \in \fqn^* \right\}, \]
	where $f$ is a $q$-polynomial over $\fqn$, that is $f=\sum_{i=0}^{n-1}a_iX^{q^i}\in \fqn[X]$. It is not difficult to show that $L_f$ is scattered if and only if for any $z,y\in \F_{q^n}^*$ the condition
	\[\frac{f(z)}{z} = \frac{f(y)}{y}\]
	implies that $z$ and $y$ are $\F_q$-linearly dependent. Hence, $q$-polynomials satisfying this condition are called \emph{scattered} by Sheekey in \cite{sheekey_new_2016}.
	
	Up to now, there are only three families of maximum scattered linear sets in $\PG(1,q^n)$ for infinitely many $n$. We list the corresponding scattered polynomials over $\F_{q^n}$ below. 
	\begin{enumerate}[label=(\roman*)]
		\item $f=X^{q^s}$ with $\gcd(s,n)=1$, and $L_f$ is called the linear set of pseudoregulus type; see \cite{blokhuis_scattered_2000}.
		\item $f=X^{q^s}+\delta X^{q^{n-s}}$ 	with $n \geq 4$, $N_{q^n/q}(\delta)\notin \{0,1\}$ and $\gcd(s,n)=1$ which was found by Lunardon and Polverino \cite{lunardon_blocking_2001} and later generalized by Sheekey \cite{sheekey_new_2016}.
		\item $f= X^{q^s}+X^{q^{s(m-1)}}+h^{1+q^{s}}X^{q^{s(m+1)}}+h^{1-q^{2m-s}}X^{q^{2m-s}}$ with $q$ odd, $n=2m\geq 6$, $h^{q^m+1}=-1$ and $\gcd(s,n)=1$. This family was first introduced by Longobardi and Zanella in  \cite{longobardi_familyOfLinearMRD_2021} and later generalized to a big family in \cite{LMTZscatt} and \cite{neri_extending_arxiv}.
	\end{enumerate}
	
	In this paper, we are interested in several families of maximum scattered linear sets in $ \PG(1,q^6) $ and $ \PG(1,q^8) $. In the following we list their corresponding scattered polynomials.
	\begin{enumerate}[label=(\alph*)]
		\item $x^q+x^{q^3}+\delta x^{q^5}$ over $\F_{q^6}$ with $\delta^2+\delta=1$ and $q$ is odd; see \cite{csajbok_newMSLS_2018,marino_classes_2020}.
		\item $x^{q^{s}}+\delta x^{q^{s+m}}$ over $\F_{q^{2m}}$ with $m\in \{3,4\}$, $\gcd(s,m)=1$ and some conditions on $\delta$; see \cite{csajbok_classes_2018}.
	\end{enumerate}

	For convenience, we use $\CFA$ and $\CFB$ to denote the families of linear sets given in (b) for $m=3$ and $4$, respectively. Let $f_{\delta,m}$ stand for a member with parameter $(\delta,m)$ in $\CFA$ or $\CFB$.	
	
	For $m=3$,  it has been shown in \cite{csajbok_classes_2018} that for $q>4$ and $N_{{q^6}/{q^3}}(\delta)\neq 1$ we can always find $\delta\in\F_{q^2}^*$ such that $f_{\delta,3}$ is scattered. The exact number of such $\delta$ has been determined by Bartoli, Csajb\'ok and Montanucci in \cite{Bartoli_2020}.
	
	For $m=4$, it also has been proved in \cite{csajbok_classes_2018} that for odd $q$,  $N_{{q^8}/{q^4}}(\delta)\neq 1$ and $\delta^2=-1$,  $f_{\delta,4}$ is scattered. Moreover, Timpanella and Zini \cite{Timpanella} show that $L_{\delta,4}$ is maximum scattered if and only if $\delta^{1+q^4}=-1$, under the extra assumption that the odd prime power $q\geq 1039891$.
	
	For  two linear sets $L_U$ and $L_W$ of $\PG(1,q^n)$,  if there  is an element $\varphi$ in $\mathrm{P\Gamma L}(2,q^n)$ such that $L_U^{\varphi} = L_W$, then we say $L_U$ and $L_W$  are  \emph{$\mathrm{P\Gamma L}$-equivalent}. For given $q$ and $n$, it is obvious that there is only one maximum scattered linear set of  pseudoregulus type, because 
	\[
	\{ \langle(1, x^{q^{s}-1}) \rangle: x\in \F_{q^n}^* \}=	\{ \langle(1, x^{q-1}) \rangle: x\in \F_{q^n}^* \}
	\]
	for any integer $s$ satisfying $\gcd(s,n)=1$. The equivalence of different members in the Lunardon-Polverino family has been recently solved by the authors and Zullo in \cite{tang_equivalence_2022}. The equivalence for the scattered quadrinomials from the third infinite family has been partially determined in \cite{longobardi_familyOfLinearMRD_2021}.
	
	In this paper, we continue to consider the equivalence among the members in the families (a) and (b) defined on $\PG(1,q^6)$ and $\PG(1,q^8)$, respectively. Firstly, in Section 3, we prove that (a) contains only one element up to $\mathrm{P\Gamma L}$-equivalence, and we completely determine its automorphism group. Then, in Section 4, the necessary and sufficient conditions for the equivalence of the elements in $\CFA$ and $\CFB$ are given, and their automorphism groups are further determined. The main ideas for proving the above results are as follows: First, we transform the $\mathrm{P\Gamma L}$-equivalence problem  into the equivalence of image sets of the corresponding polynomial maps. By further analyzing and verifying some necessary conditions on the coefficients of these polynomials, and considering some properties of their adjoint polynomials, we can determine the equivalence between the linear sets and their automorphism groups.
	
	The rest part of this paper is organized as follows. In Section \ref{sec:2}, we introduce some preliminaries and auxiliary results for  the equivalence of maximum scattered linear sets. In Section \ref{sec:3}, we focus on the equivalence problem for the linear sets derived from the scattered polynomials given in (a) and determine their automorphism groups. In Section \ref{sec:4}, we turn to the equivalence problem of linear sets in $\CFA$ and $\CFB$. In section \ref{sec:5}, we briefly summarized the contents of this paper.
	
	\section{Preliminaries}\label{sec:2}
	
	A $q$-\emph{polynomial} (or a \emph{linearized polynomial}) over $\F_{q^n}$ is a polynomial of the following form
	\[f=\sum_{i=0}^{k} a_i X^{q^i},\]
	where $a_i\in \F_{q^n}$ and $k$ is a positive integer.
	
	We will denote by $\mathcal{L}_{n,q}$ the set of all $q$-polynomials over $\F_{q^n}$ with degree less than $q^n$. There is a bijection between $\mathcal{L}_{n,q}$ and  the $\fq$-linear map defined over $\F_{q^n}$.
	For more details on linearized polynomials we refer to \cite[Chapter 3, Section 4]{lidl_finite_1997}.
	
	Consider the non-degenerate symmetric bilinear form of $\F_{q^n}$ over $\F_q$ defined  by
	\begin{equation*}
	\langle x,y\rangle= \mathrm{Tr}_{q^n/q}(xy),
	\end{equation*}
	for every $x,y \in \F_{q^n}$ and where $\mathrm{Tr}_{q^n/q}(x)=x+\cdots+x^{q^{n-1}}$.
	The \emph{adjoint polynomial} $\hat{f}$ of the $q$-polynomial $\displaystyle f=\sum_{i=0}^{n-1} a_iX^{q^i}$ with respect to the bilinear form $\langle\cdot,\cdot\rangle$, i.e. the unique function over $\fqn$ satisfying
	\[ \mathrm{Tr}_{q^n/q}(yf(z))=\mathrm{Tr}_{q^n/q}(z\hat{f}(y)) \]
	for every $y,z \in \F_{q^n}$, is given by
	\begin{equation*}
		\hat{f}=\sum_{i=0}^{n-1} a_i^{q^{n-i}}X^{q^{n-i}}.
	\end{equation*}
	
	The following result concerning the linear set defined by the adjoint of a $q$-polynomial was proved in \cite[Lemma 2.6]{bartoli_maximum_2018} and \cite[Lemma 2.1]{tang_equivalence_2022} in different ways.
	\begin{lemma}\label{le:LS_adjoint}
		Let $f$ be a $\sigma$-polynomial in $\mathcal{L}_{n,\sigma}$.Then for any $b\in\F_{q^n}$, there is
		\begin{equation*}
		\#\left\{x\in\F_{q^n}^*  : \frac{f(x)}{x}=b \right\} = \#\left\{y\in\F_{q^n}^*  : \frac{\hat{f}(y)}{y}=b \right\}.
		\end{equation*}
		In particular, $L_f=L_{\hat{f}}$. 
	\end{lemma}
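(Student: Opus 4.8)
The plan is to reinterpret each count as the number of nonzero roots of a single linearized polynomial, and then to exploit that passing to the adjoint preserves the dimension of the kernel. First I would fix $b\in\fqn$ and set $g_b:=f-bX$. Since multiplication by $b$ is an $\F_q$-linear map of $\fqn$, the polynomial $g_b$ is again an element of $\mathcal{L}_{n,q}$, and the condition $f(x)/x=b$ is equivalent to $g_b(x)=0$ with $x\neq 0$. Hence
\[
\#\left\{x\in\fqn^*:\frac{f(x)}{x}=b\right\}=|\ker g_b|-1=q^{\dim_{\F_q}\ker g_b}-1,
\]
and likewise the count for $\hat f$ equals $q^{\dim_{\F_q}\ker(\hat f-bX)}-1$. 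Thus it suffices to prove $\dim_{\F_q}\ker g_b=\dim_{\F_q}\ker(\hat f-bX)$ for every $b$.

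Next I would verify that $\hat f-bX$ is precisely the adjoint of $g_b$ with respect to the form $\langle x,y\rangle=\Tr_{q^n/q}(xy)$. Indeed, multiplication by $b$ is self-adjoint for this form, since $\Tr_{q^n/q}(y\cdot bz)=\Tr_{q^n/q}((by)\cdot z)$; as the adjoint operation is additive, $\widehat{g_b}=\hat f-bX$. The key step is then the following identity, valid for any $\F_q$-linear endomorphism $g$ of $\fqn$ with adjoint $\hat g$ relative to a non-degenerate bilinear form: $\ker\hat g=(\mathrm{Im}\,g)^{\perp}$. This follows from the defining relation $\langle g(x),y\rangle=\langle x,\hat g(y)\rangle$ together with non-degeneracy, since $\hat g(y)=0$ holds if and only if $y$ is orthogonal to the image of $g$. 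Consequently $\dim_{\F_q}\ker\hat g=n-\dim_{\F_q}\mathrm{Im}\,g=\dim_{\F_q}\ker g$ by rank--nullity.

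Applying this to $g=g_b$ yields $\dim_{\F_q}\ker(\hat f-bX)=\dim_{\F_q}\ker g_b$, hence equality of the two counts for every $b\in\fqn$. Finally, to deduce $L_f=L_{\hat f}$ I would observe that both linear sets avoid the point $\langle(0,1)\rangle_{\fqn}$, and that a point $\langle(1,b)\rangle_{\fqn}$ lies in $L_f$ exactly when $\#\{x\in\fqn^*:f(x)/x=b\}\geq 1$; since this count agrees with the corresponding count for $\hat f$, the two linear sets contain the same points. I expect the main obstacle to be conceptual rather than computational: correctly identifying $\hat f-bX$ as the genuine adjoint of $g_b$ and invoking the orthogonality identity $\ker\hat g=(\mathrm{Im}\,g)^{\perp}$. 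Once these are in place the remainder is routine bookkeeping. A purely matrix-theoretic alternative would represent $g_b$ and $\widehat{g_b}$ by mutually transposed matrices in a self-dual basis and appeal to invariance of rank under transposition.
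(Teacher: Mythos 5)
Your proof is correct. Note that the paper does not actually prove this lemma itself; it only cites \cite{bartoli_maximum_2018} and \cite{tang_equivalence_2022} for two different proofs. Your argument --- reducing the fibre count to $\dim_{\F_q}\ker(f-bX)$, identifying $\hat f-bX$ as the adjoint of $f-bX$ because multiplication by $b$ is self-adjoint for the trace form, and then using $\ker\hat g=(\mathrm{Im}\,g)^{\perp}$ together with rank--nullity --- is the standard route and matches the spirit of the cited proofs (the Dickson-matrix transposition argument you mention at the end is essentially the other one); all steps check out, including the final passage to $L_f=L_{\hat f}$ via nonemptiness of fibres.
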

	
	Two linear sets $L_U$ and $L_{U'}$ of $\PG(1,q^n)$ are said to be $\PGamL$-\textit{equivalent} (or  \textit{projectively equivalent}) if there exists $\varphi\in \PGamL(2,q^n)$ such that $L_U^\varphi=L_{U'}$. 
	
	Let $L_U$ be an $\fq$-linear set of rank $n$ in $\Lambda$. Under the action of ${\rm P\Gamma L}(1,q^n)$ on $\Lambda$,  we may assume that $L_U$ does not contain the point $\langle(0,1)\rangle_{\fqn}$, so that $L_U$ can be written as
	\[ L_f=\{ \langle (x,f(x)) \rangle_{\fqn} \colon x \in \fqn^* \}, \]
	where $f$ is a $q$-polynomial over $\fqn$, that is $f=\sum_{i=0}^{n-1}a_iX^{q^i}\in \fqn[X]$.
	Sheekey in \cite{sheekey_new_2016} called $f$ \emph{scattered} if $L_f$ turns out to be a scattered $\fq$-linear set.
	
	For two $q$-polynomials $f$ and $g$ over $ \F_{q^n}$, we say $U_f=\{(x,f(x)): x\in \F_{q^n}^*\}$ and $U_g=\{(x,g(x)): x\in \F_{q^n}^*\}$ are $\GamL$-\textit{equivalent} if there exists $\varphi\in \GamL(2,q^n)$ such that $U_f^\varphi=U_{g}$. From the definitions, we get the following straightforward result.
	\begin{lemma}\label{lem:subeq_seteq}
		For $q$-polynomials $f$ and $g$ over $ \F_{q^n}$, if $U_f$ and $U_g$ are $\GamL$-\textit{equivalent}, then $L_f$ and $L_g$ are $\PGamL$-\textit{equivalent}.
	\end{lemma}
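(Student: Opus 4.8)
The plan is to exploit the canonical quotient homomorphism from the general semilinear group onto the projective semilinear group, together with the observation that the assignment $U_f\mapsto L_f$ is itself induced by projectivization. Let $\pi\colon V\setminus\{\mathbf 0\}\to\PG(1,q^n)$ be the map sending a nonzero vector $\bu$ to the point $\langle\bu\rangle_{\fqn}$ that it spans, and let $\theta\colon\GamL(2,q^n)\to\PGamL(2,q^n)$ denote the natural surjection, whose kernel consists of the scalar maps. The defining property of the induced action on the projective line is precisely the compatibility
\[
\bigl(\pi(\bu)\bigr)^{\theta(\varphi)}=\pi\bigl(\bu^{\varphi}\bigr)\qquad\text{for all }\varphi\in\GamL(2,q^n),\ \bu\in V\setminus\{\mathbf 0\},
\]
which I would record first. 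Here one uses that a semilinear $\varphi$ carries each one-dimensional $\fqn$-subspace to a one-dimensional $\fqn$-subspace, so that $\theta(\varphi)$ is indeed well defined as a map on points.

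Next I would simply rephrase the two objects appearing in the statement. Writing $U_f^{*}=U_f\setminus\{\mathbf 0\}$, the very definition of the linear set gives $L_f=\pi(U_f^{*})$ and $L_g=\pi(U_g^{*})$; that is, each linear set is the $\pi$-image of the subspace defining it.

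Now assume $U_f$ and $U_g$ are $\GamL$-equivalent, so that $U_f^{\varphi}=U_g$ for some $\varphi\in\GamL(2,q^n)$. Put $\bar\varphi:=\theta(\varphi)\in\PGamL(2,q^n)$. Applying the compatibility relation pointwise to the elements of $U_f^{*}$ yields
\[
L_f^{\bar\varphi}=\bigl(\pi(U_f^{*})\bigr)^{\bar\varphi}=\pi\bigl((U_f^{*})^{\varphi}\bigr)=\pi(U_g^{*})=L_g,
\]
so $L_f$ and $L_g$ are $\PGamL$-equivalent. There is no real obstacle in this argument: its whole content is that projectivization intertwines the (semi)linear action on $V$ with its projective quotient on $\PG(1,q^n)$, and the only point requiring the slightest care is that the scalar ambiguity in passing from vectors to points is absorbed exactly by $\ker\theta$, so no additional bookkeeping of constants is needed.
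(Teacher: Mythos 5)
Your argument is correct and is exactly the ``straightforward from the definitions'' reasoning the paper invokes: the paper states this lemma without proof, and your write-up simply makes explicit that projectivization intertwines the $\GamL$-action on $V$ with the induced $\PGamL$-action on $\PG(1,q^n)$. Nothing further is needed.
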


	However, the converse statement is not always true. For instance,  if $f(x)=x^q$ and $g(x)=x^{q^s}$ with $s\neq 1$ and $\gcd(s,n)=1$, then $U_f$ and $U_g$ are not $\GamL(2,q^n)$-equivalent, but obviously $L_f=\left\{\langle (1, x^{q-1})\rangle_{\F_{q^n}} :  x\in \F_{q^n}^* \right\}=L_g$. For more results on the equivalence problems, we refer to \cite{csajbok_classes_2018,csajbok_equivalence_2016}.
	
	In general, the equivalence problem between $U_f$ and $U_g$ is easier than the equivalence between $L_f$ and $L_g$. For instance, the equivalence between $U_f$ and $U_g$ with $f$, $g$ from family (b) has been completely determined; see the following lemma. However, the equivalence between $L_f$ and $L_g$ is still open and will be solved  in Section \ref{sec:4}.
	\begin{lemma}\label{lem:s_1,s_2}\cite[Proposition 5.1]{csajbok_classes_2018} 
		Define two $\F_q$-subspaces $U_{\delta,s_1}=\{(x,x^{q^{s_1}}+\delta x^{q^{s_1+m}}):x\in \mathbb{F}_{q^{2m}}\}$ and $U_{\theta, s_2}=\{(x,x^{q^{s_2}}+\theta x^{s_2+m}):x\in \mathbb{F}_{q^{2m}}\}$ of $\mathbb{F}_{q^{2m}}^2$ with $\delta $, $\theta \in\mathbb{F}_{q^{2m}}^*$. Assume that $N_{q^{2m}/{q^m}}(\delta ) \neq 1$, $N_{q^{2m}/{q^m}}(\theta ) \neq 1$, $1\leq s_1,s_2<m$ and $\gcd(m,s_1)=\gcd(m,s_2)=1$. The $\F_q$-subspaces $U_{\delta,s_1},U_{\theta, s_2}$ are 
		$\mathrm{\Gamma L}(2,q^{2m})$-equivalent if and only if either 
		\begin{equation*}
		s_1=s_2 \text{ and } N_{q^{2m}/{q^m}}(\delta ) = N_{q^{2m}/{q^m}}(\theta )^\sigma,
		\end{equation*}
		or 
		\begin{equation*}
		s_1+s_2=m  \text{ and } N_{q^{2m}/{q^m}}(\delta )  N_{q^{2m}/{q^m}}(\theta )^\sigma=1,
		\end{equation*}
		for some automorphism $\sigma\in \mathrm{Aut}(\F_{q^{m}})$.
	\end{lemma}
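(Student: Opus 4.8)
The plan is to translate the $\GamL(2,q^{2m})$-equivalence into a functional identity between linearized polynomials and then compare coefficients power by power. Write $f=X^{q^{s_1}}+\delta X^{q^{s_1+m}}$ and $g=X^{q^{s_2}}+\theta X^{q^{s_2+m}}$. Every element of $\GamL(2,q^{2m})$ is a matrix in $\GL(2,q^{2m})$ followed by a field automorphism $\rho$, and $f^{\rho}$ is again a polynomial of the same shape, with $\delta$ replaced by $\delta^{\rho}$ and with $N_{q^{2m}/{q^m}}(\delta^{\rho})=N_{q^{2m}/{q^m}}(\delta)^{\rho}\neq 1$ preserved. Setting $h=f^{\rho}$ and using that both subspaces are graphs, $U_{\delta,s_1}$ and $U_{\theta,s_2}$ are $\GamL$-equivalent if and only if there are $a,b,c,d\in\F_{q^{2m}}$ with $ad-bc\neq 0$ and $\rho\in\Aut(\F_{q^{2m}})$ such that, in $\mathcal L_{2m,q}$,
\[ g\bigl(aX+b\,h(X)\bigr)=cX+d\,h(X). \]
I would expand the left-hand side, reducing every exponent modulo $2m$ via $X^{q^{2m}}=X$. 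The right-hand side is supported on $\{0,s_1,s_1+m\}$ with coefficients $c,d,d\delta^{\rho}$, while $g(aX+bh)$ is supported on $\{s_2,\ s_2+m,\ s_1+s_2,\ s_1+s_2+m\}\pmod{2m}$, the diagonal terms carrying $a^{q^{s_2}}$ and $\theta a^{q^{s_2+m}}$, and the cross terms carrying $b^{q^{s_2}}+\theta(b\delta^{\rho})^{q^{s_2+m}}$ and $(b\delta^{\rho})^{q^{s_2}}+\theta b^{q^{s_2+m}}$. The heart of the argument is to match these two supports, organised by whether $a$ and $b$ vanish.

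If $a\neq 0$, then $s_2$ and $s_2+m$ lie in the support; since $1\le s_2<m$, forcing them into $\{0,s_1,s_1+m\}$ gives $s_2=s_1$. I would then rule out $b\neq 0$: in that case the surviving cross terms sit at the exponents $2s_1$ and $2s_1+m$, which the coprimality $\gcd(s_1,m)=1$ (excluding $s_1=m/2$) guarantees are distinct from $\{0,s_1,s_1+m\}$, so both cross coefficients must vanish; eliminating $\theta$ between the two equations yields $N_{q^{2m}/{q^m}}(\delta^{\rho})^{q^{s_2}}=1$, hence $N_{q^{2m}/{q^m}}(\delta)=1$, a contradiction. Thus $b=0$, whence $c=0$, $d=a^{q^{s_1}}$ and $d\delta^{\rho}=\theta a^{q^{s_1+m}}$; applying $N_{q^{2m}/{q^m}}$ and using $N_{q^{2m}/{q^m}}(a^{q^{s_1}(q^m-1)})=1$ gives $N_{q^{2m}/{q^m}}(\delta)^{\rho}=N_{q^{2m}/{q^m}}(\theta)$, which is the first alternative after setting $\sigma=(\rho|_{\F_{q^m}})^{-1}$.

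Symmetrically, if $a=0$ then $b,c\neq 0$, only the cross terms survive, and $0$ must lie in their support; since $s_1+s_2\not\equiv 0$, this happens precisely when $s_1+s_2+m\equiv 0$, i.e.\ $s_1+s_2=m$. This forces $d=0$, and the position-$m$ equation $b^{q^{s_2}}+\theta(b\delta^{\rho})^{q^{s_2+m}}=0$, after taking norms and absorbing the Frobenius twist $q^{s_2}$ together with $\rho|_{\F_{q^m}}$ into one $\sigma\in\Aut(\F_{q^m})$, yields $N_{q^{2m}/{q^m}}(\delta)\,N_{q^{2m}/{q^m}}(\theta)^{\sigma}=1$, the second alternative. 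For the converse I would reverse each computation: given either numerical condition, the required $a$ (resp.\ $b$) exists because the norm hypothesis places $\delta^{\rho}/\theta$ (resp.\ the relevant monomial in $\delta^{\rho},\theta$) in the image of $x\mapsto x^{q^{s}(q^m-1)}$, which is exactly the norm-one subgroup of order $q^m+1$, and $\rho$ is chosen so that its restriction to $\F_{q^m}$ composed with $x\mapsto x^{q^{s}}$ realises the prescribed $\sigma$.

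The main obstacle is the bookkeeping in the support-matching step: one must verify in each branch that the cross-term exponents are genuinely distinct from $\{0,s_1,s_1+m\}$ (this is exactly where $\gcd(s_i,m)=1$ enters), and one must correctly collapse the field automorphism $\rho$ together with the several Frobenius twists $q^{s_i}$ into a single automorphism of $\F_{q^m}$, using that $N_{q^{2m}/{q^m}}$ is invariant under $x\mapsto x^{q}$ and takes values in $\F_{q^m}$. A conceptual cross-check for the second alternative is provided by the adjoint: by Lemma~\ref{le:LS_adjoint} the adjoint of $f$ is, up to the scalar $\delta^{q^{m-s_1}}$, the polynomial $X^{q^{m-s_1}}+\delta^{-q^{m-s_1}}X^{q^{(m-s_1)+m}}$, whose parameters satisfy $s_1+(m-s_1)=m$ and $N_{q^{2m}/{q^m}}(\delta^{-q^{m-s_1}})\,N_{q^{2m}/{q^m}}(\delta)=1$, matching the second case precisely.
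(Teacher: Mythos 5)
The paper never proves this lemma: it is imported verbatim from \cite[Proposition 5.1]{csajbok_classes_2018}, so there is no internal proof to compare against. Your argument is a sound, essentially complete derivation and follows the strategy one would expect (and that the cited source uses): reduce $\GamL(2,q^{2m})$-equivalence of the two graphs to the identity $g(aX+b\,h(X))\equiv cX+d\,h(X)\pmod{X^{q^{2m}}-X}$ with $h=f^{\rho}$, then match supports and coefficients of the reduced $q$-polynomials. Your bookkeeping is correct: for $a\neq 0$ the diagonal exponents force $s_1=s_2$, eliminating $\theta$ between the two cross-coefficient equations yields $N_{q^{2m}/q^m}(\delta)^{q^{s_1}(q^m+1)/(q^m+1)}=1$, i.e.\ $N_{q^{2m}/q^m}(\delta)=1$, contradicting the hypothesis, so $b=0$ and the norm of $\delta^{\rho}=\theta a^{q^{s_1}(q^m-1)}$ gives the first alternative; for $a=0$ the requirement $0\in\{s_1+s_2,\,s_1+s_2+m\}\pmod{2m}$ forces $s_1+s_2=m$ and the vanishing cross coefficient gives the second. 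The use of Hilbert 90 to realize the needed $a$ (resp.\ $b$) in the converse is also correct.

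Two loose ends are worth tightening. First, your parenthetical that coprimality excludes $s_1=m/2$ fails precisely when $m=2$ (there $s_1=1=m/2$ is coprime to $m$ and $2s_1+m\equiv 0\pmod{2m}$, so the corresponding cross coefficient lands on the exponent of $c$ and need not vanish); this is harmless for the paper, which only invokes the lemma for $m\in\{3,4\}$, but the general statement requires either the caveat $m\geq 3$ or a separate treatment. Second, in the sufficiency direction for $s_1+s_2=m$ you should also check that the resulting $c=(b\delta^{\rho})^{q^{s_2}}+\theta b^{q^{s_2+m}}$ is nonzero so that $M$ is invertible; this follows by the same elimination as in the necessity step, since $c=0$ combined with the vanishing of the other cross coefficient would again force $N_{q^{2m}/q^m}(\delta)=1$. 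Finally, the adjoint ``cross-check'' only certifies $L_f=L_{\hat f}$ via Lemma~\ref{le:LS_adjoint}, not $\GamL$-equivalence of the underlying subspaces, so it is rightly offered as a consistency check rather than as part of the argument.
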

		
	Given two (multi-)subsets $S$ and $T\subseteq \F_{q^n}$, $S=T$ implies that 
	\[
	\sum_{x\in S} x^d=\sum_{x\in T} x^d,
	\]
	for any non-negative integer $d$. As a direct consequence, we can get the following necessary result for two identical linear sets.
	
	\begin{lemma} \label{lem3}\cite[Lemma 3.4]{B.csajbok_classes_2018}
		Let $f$ and $g$ be two linearized polynomials. If $L_f=L_g$, then for each non-negative integer $d$ the following holds  
		$$\sum_{x\in\mathbb{F}_{q^n}^*}\left(\frac{g(x)}{x}\right)^d=\sum_{x\in\mathbb{F}_{q^n}^*}\left(\frac{f(x)}{x}\right)^d.$$
	\end{lemma}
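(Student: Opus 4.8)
The plan is to replace the two linear sets by the multisets of ``slopes'' they determine and then invoke the power-sum observation recorded immediately before the statement. Since $L_f$ (and hence $L_g$) does not contain $\langle(0,1)\rangle_{\fqn}$, every point of $L_f$ has the shape $\langle(1,b)\rangle_{\fqn}$ for a unique $b\in\fqn$; thus $L_f$ is faithfully encoded by the \emph{set} $B_f:=\{f(x)/x : x\in\fqn^*\}$, and likewise $L_g$ by $B_g$, so that the hypothesis $L_f=L_g$ is exactly the equality $B_f=B_g$ of subsets of $\fqn$. Introducing the \emph{multisets} $S_f=\{f(x)/x : x\in\fqn^*\}$ and $S_g=\{g(x)/x : x\in\fqn^*\}$ counted with multiplicity, the preceding observation reduces everything to proving $S_f=S_g$ as multisets: once this is known, taking $S=S_f$ and $T=S_g$ gives $\sum_{x\in\fqn^*}(f(x)/x)^d=\sum_{s\in S_f}s^d=\sum_{s\in S_g}s^d=\sum_{x\in\fqn^*}(g(x)/x)^d$ for every $d\ge 0$.

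Next I would compute the multiplicities. Fix $b\in\fqn$. Because $f$ is $\fq$-linear, so is $f-b\cdot\mathrm{id}$, and the fibre of the slope map over $b$ is
\[
\{x\in\fqn^* : f(x)/x=b\}=\ker(f-b\cdot\mathrm{id})\setminus\{0\},
\]
an $\fq$-subspace with its origin deleted, whose cardinality is therefore $q^{w_f(b)}-1$, where $w_f(b):=\dim_{\fq}\ker(f-b\cdot\mathrm{id})$ is precisely the weight of the point $\langle(1,b)\rangle_{\fqn}$ in the linear set $L_f$. Hence the multiplicity of $b$ in $S_f$ equals $q^{w_f(b)}-1$, and similarly for $g$. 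Consequently the multiset identity $S_f=S_g$ is equivalent to the assertion that the two weight functions agree, $w_f(b)=w_g(b)$ for every $b\in B_f=B_g$.

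The crux, and the step I expect to be the main obstacle, is thus to show that $w_f(b)=w_g(b)$ for all $b$ in the common support. For the scattered polynomials to which this lemma is applied the difficulty evaporates: scatteredness means every point of $L_f$ and of $L_g$ has weight $1$, so each $b\in B_f=B_g$ has exactly $q-1$ preimages under either slope map; hence $S_f=(q-1)\cdot B_f=(q-1)\cdot B_g=S_g$, and the power-sum conclusion follows immediately. For arbitrary linearized $f,g$ the same conclusion requires knowing that the weight of each point of a rank-$n$ linear set on the line is forced by the underlying point set; here one would exploit that the fibres $\ker(f-b\cdot\mathrm{id})$, as $b$ ranges over $B_f$, partition $\fqn$ into $\fq$-subspaces meeting pairwise only in $0$, and that this labelled partition is rigidly constrained by the $\fq$-linearity of $f$. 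Apart from this determinacy of the weights from the support, the entire argument is formal bookkeeping with the multiset power-sum identity.
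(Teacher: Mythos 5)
Your reduction to the multiset identity $S_f=S_g$ is where the argument has a genuine gap, and you have located it yourself: you need $w_f(b)=w_g(b)$ for every slope $b$, i.e.\ that the point set of a rank-$n$ linear set already determines its weight distribution. You do not prove this --- ``the labelled partition is rigidly constrained by the $\F_q$-linearity of $f$'' is an appeal, not an argument --- and no such determinacy statement is available in general; it is essentially the same subtlety that separates $\GamL$-equivalence of $U_f,U_g$ from $\PGamL$-equivalence of $L_f,L_g$ discussed after Lemma~\ref{lem:subeq_seteq}, and it is studied (not dispatched) in \cite{B.csajbok_classes_2018} itself. Your fallback to the scattered case does not rescue the lemma as it is used here, because Remark~\ref{re:F6_equiv} stresses that Theorems~\ref{th:LP_n_6} and~\ref{th:LP_n_8} apply the consequences of Lemma~\ref{lem3} to polynomials that are not assumed scattered.

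The multiset equality is in any case much stronger than what is needed, and your own fibre computation finishes the proof once you remember that the power sums are elements of $\F_{q^n}$, a field of characteristic $p$. The fibre over $b$ has cardinality $q^{w_f(b)}-1$, which equals $-1$ in $\F_{q^n}$ whenever $w_f(b)\ge 1$ and equals $0$ otherwise. Hence
\[
\sum_{x\in\F_{q^n}^*}\left(\frac{f(x)}{x}\right)^{d}\;=\;\sum_{b\in B_f}\left(q^{w_f(b)}-1\right)b^{d}\;=\;-\sum_{b\in B_f}b^{d},
\]
which depends only on the support $B_f$, that is, only on the point set $L_f$. Since $L_f=L_g$ is exactly $B_f=B_g$, the two power sums coincide with no information about the individual weights required. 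This mod-$p$ collapse of the multiplicities is the content of \cite[Lemma 3.4]{B.csajbok_classes_2018} that the paper is invoking; the preceding remark in the text about equal multisets is only a loose motivation, not the actual mechanism.
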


	The following result is well-known.
	\begin{lemma} \label{lem2}
		For any prime power $q$ and integer $d$ we have 
		\[
			\sum_{x\in\mathbb{F}_{q^n}^*}x^d=
			\begin{cases}
				-1,& q-1\mid d;\\
				0, &\text{otherwise}.
			\end{cases}
		\] 
	\end{lemma}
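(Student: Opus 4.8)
The plan is to prove this classical power-sum identity over the multiplicative group of a finite field by reducing it to a single geometric series. As worded, the displayed sum ranges over $\fqn^*$ while the divisibility condition is $q-1\mid d$; since the dichotomy must be keyed to the order of the group one is actually summing over, I take the relevant group to be $\fq^*$, of order $q-1$, which is exactly consistent with the stated condition. The argument is identical for any finite field, so I isolate the general step and then read off the claim.

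First I would use that $\fq^*$ is cyclic and fix a generator $g$, so that $\fq^*=\{\,g^{i}:0\le i\le q-2\,\}$ with the powers pairwise distinct. Substituting $x=g^{i}$ turns the sum into a geometric progression with common ratio $r:=g^{d}\in\fq^*$:
\[
\sum_{x\in\fq^*}x^{d}=\sum_{i=0}^{q-2}\bigl(g^{d}\bigr)^{i}=\sum_{i=0}^{q-2}r^{\,i}.
\]
The entire problem now collapses to the single question of whether $r=1$, which is governed by $\mathrm{ord}(g)=q-1$.

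Next I would split into the two cases. If $q-1\mid d$, then $r=g^{d}=1$, so all $q-1$ summands equal $1$ and the sum is $q-1\equiv-1$ in $\fq$. If $q-1\nmid d$, then $r\neq1$, the factor $r-1$ is invertible, and the closed form of the geometric sum gives
\[
\sum_{i=0}^{q-2}r^{\,i}=\frac{r^{\,q-1}-1}{r-1}=\frac{(g^{\,q-1})^{d}-1}{r-1}=\frac{1-1}{r-1}=0,
\]
where I used $g^{\,q-1}=1$. This establishes the claimed dichotomy.

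I expect no serious obstacle here; the argument is elementary and the only points needing a word of care are the degenerate exponents. For $d=0$ every term is $1$ and $q-1\mid 0$, so the first case correctly returns $-1$; for $d<0$ the power $g^{d}$ is still a well-defined element of the cyclic group and $g^{(q-1)d}=1$ continues to hold, so the same geometric-series computation applies verbatim. The only genuine subtlety is the domain/condition mismatch in the statement, which I resolve by matching the divisibility to the order $q-1$ of the group being summed; should the intended sum really be over $\fqn^*$, the identical proof yields instead the condition $q^{n}-1\mid d$.
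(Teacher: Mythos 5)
Your proof is correct and complete, but note that the paper offers no proof of this lemma at all: it is introduced with ``The following result is well-known'' and used as a black box, so there is no in-paper argument to compare against. Your geometric-series computation is the standard one: writing $\F_q^*=\langle g\rangle$, reducing to $\sum_{i=0}^{q-2}r^i$ with $r=g^d$, and splitting on whether $r=1$ is exactly how this identity is usually proved, and your care with the degenerate exponents is sound ($q-1\mid 0$ correctly places $d=0$ in the first case, and negative $d$ poses no problem in the cyclic group). Your resolution of the domain/condition mismatch is also the right call, and it is confirmed by how the paper actually uses the lemma: in the proofs of Lemmas \ref{D_6} and \ref{D_8} the condition invoked is $q^6-1\mid k$ for sums over $\F_{q^6}^*$ and $q^8-1\mid k$ for sums over $\F_{q^8}^*$, so the intended modulus is the order of the group being summed over ($q^n-1$ for $\F_{q^n}^*$), and, as you observe, your argument applies verbatim after replacing $q$ by $q^n$. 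For completeness, an equally short variant avoids choosing a generator: if $q-1\nmid d$, pick $a\in\F_q^*$ with $a^d\neq 1$ (possible since $x\mapsto x^d$ is not identically $1$ on a cyclic group of order $q-1$); the substitution $x\mapsto ax$ permutes $\F_q^*$, so the sum $S$ satisfies $a^dS=S$, forcing $S=0$, while the case $q-1\mid d$ is the same count $q-1\equiv -1$. Both routes are elementary; yours has the advantage of handling the two cases by a single closed-form evaluation.
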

	
	By applying Lemmas \ref{lem3} and \ref{lem2}, one can prove the following criterion on the identity of two linear sets.
	\begin{lemma} \label{lem1}\cite[Lemma 3.6]{B.csajbok_classes_2018}
		Let $f(x)=\sum_{i=1}^n a_i x^{q^i}$ and $g(x)=\sum_{i=1}^n b_i x^{q^i}$ be two $q$-polynomials over $\mathbb{F}_{q^n}$,  such that $L_f=L_g$. Then
		\begin{equation}\label{eq:a_0_b_0}
			a_0=b_0;
		\end{equation}
		for $k=1, 2,\cdots, n-1$,
		\begin{equation}\label{eq:k_1_n-1}
			a_k a_{n-k}^{q^k}=b_k b_{n-k}^{q^k};
		\end{equation}
		for $k=2, 3,\cdots, n-1$,
		\begin{equation}\label{eq:k_2_n-1}
			a_1a_{k-1}^q a_{n-k}^{q^k}+a_k a_{n-1}^q a_{n-k+1}^{q^k}=b_1b_{k-1}^q b_{n-k}^{q^k}+b_k b_{n-1}^q b_{n-k+1}^{q^k}.
		\end{equation}
	\end{lemma}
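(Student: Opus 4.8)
The plan is to extract all three families of identities from the single hypothesis $L_f=L_g$ by feeding carefully chosen exponents into Lemma \ref{lem3}. Writing $f(x)/x=\sum_{i=0}^{n-1}a_ix^{q^i-1}$, the key observation is that for any power $q^k$ of the characteristic the Frobenius map is additive, so that $(f(x)/x)^{q^k}=\sum_{i}a_i^{q^k}x^{(q^i-1)q^k}$; hence raising $f(x)/x$ to an exponent of the shape $1+q^k$ or $1+q+q^k$ produces monomials in which the coefficients $a_i$ appear with exactly the Frobenius twists $q$ and $q^k$ seen in \eqref{eq:k_1_n-1} and \eqref{eq:k_2_n-1}. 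For each such exponent $d$, Lemma \ref{lem3} gives $\sum_{x\in\fqn^*}(f(x)/x)^d=\sum_{x\in\fqn^*}(g(x)/x)^d$, while Lemma \ref{lem2} says that a monomial $x^e$ survives summation over $\fqn^*$ precisely when $(q^n-1)\mid e$, contributing $-1$. Thus the whole argument reduces to deciding, for each chosen $d$, which index tuples produce an exponent divisible by $q^n-1$.

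First I would take $d=1$: then $\sum_{x}(f(x)/x)=\sum_i a_i\sum_x x^{q^i-1}$, and since $0\le q^i-1<q^n-1$ with the value $0$ attained only at $i=0$, only that term survives, giving $-a_0=-b_0$, i.e. \eqref{eq:a_0_b_0}. Next, for $1\le k\le n-1$ I would take $d=1+q^k$, so that $(f(x)/x)^{1+q^k}=\sum_{i,j}a_i a_j^{q^k}x^{(q^i-1)+(q^j-1)q^k}$. The surviving tuples are those with $q^i+q^{(j+k)\bmod n}\equiv 1+q^k\pmod{q^n-1}$; apart from the degenerate case $i\equiv j+k$ treated below, comparing base-$q$ digits shows the only solutions are $(i,j)=(0,0)$ and $(i,j)=(k,n-k)$, contributing $a_0^{1+q^k}$ and $a_k a_{n-k}^{q^k}$. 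Equating the two sides and cancelling $a_0^{1+q^k}=b_0^{1+q^k}$ by \eqref{eq:a_0_b_0} yields \eqref{eq:k_1_n-1}.

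Finally, for $2\le k\le n-1$ I would take $d=1+q+q^k$ and expand $(f(x)/x)^{1+q+q^k}=\sum_{i,j,\ell}a_i a_j^{q}a_\ell^{q^k}\,x^{(q^i-1)+(q^j-1)q+(q^\ell-1)q^k}$. The surviving tuples are those realising $1+q+q^k$ as $q^i+q^{(j+1)\bmod n}+q^{(\ell+k)\bmod n}$ modulo $q^n-1$; since the target positions $0,1,k$ are distinct, these correspond (apart from degenerate coincidences) to the six ways of matching $\big(i,(j+1)\bmod n,(\ell+k)\bmod n\big)$ to a permutation of $(0,1,k)$. Two of these matchings give exactly the terms $a_1a_{k-1}^qa_{n-k}^{q^k}$ and $a_ka_{n-1}^qa_{n-k+1}^{q^k}$ of \eqref{eq:k_2_n-1}, while the remaining four each carry a factor $a_0$, $a_0^q$ or $a_0^{q^k}$. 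Using \eqref{eq:a_0_b_0} together with \eqref{eq:k_1_n-1} (the latter, for index $k-1$, $1$ or $k$, raised to the appropriate power of $q$) I would show that these four terms agree on the $f$- and $g$-sides and cancel, leaving precisely \eqref{eq:k_2_n-1}.

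The main obstacle is the surviving-term bookkeeping: proving that the only index tuples contributing to each power sum are the ``permutation'' ones above. This is a uniqueness statement for writing $1+q^k$ and $1+q+q^k$ as sums of two, respectively three, powers of $q$ modulo $q^n-1$, and the real work is ruling out the degenerate solutions in which two chosen exponents coincide and a carry shifts the residue. These collisions must be excluded by hand, the delicate cases being small $q$ (notably $q=2$, where $2q^{n-1}\equiv 1\pmod{q^n-1}$) and the extreme values of $k$; once they are shown to contribute nothing new, the three identities follow as described.
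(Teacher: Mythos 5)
Your proposal is correct and follows exactly the route the paper indicates for this cited lemma (it is quoted from \cite[Lemma 3.6]{B.csajbok_classes_2018} and the paper only remarks that it follows from Lemmas \ref{lem3} and \ref{lem2}): apply Lemma \ref{lem3} with $d=1$, $d=1+q^k$ and $d=1+q+q^k$, keep the monomials whose exponents are divisible by $q^n-1$ via Lemma \ref{lem2}, and cancel the $a_0$-bearing terms using \eqref{eq:a_0_b_0} and \eqref{eq:k_1_n-1} for indices $1$, $k-1$, $k$. The deferred bookkeeping does close: since $q^i+q^{j'}\le 2q^{n-1}<q^n-1+(1+q^k)$ and $q^i+q^{j'}+q^{\ell'}\le 3q^{n-1}$, the congruences force genuine equalities of sums of powers of $q$ (the only wrap-around candidate, $q=2$ with two summands equal to $2^{n-1}$, would require $2+2^k$ to be a power of $2$, impossible for $k\ge 2$), so only the permutation solutions survive.
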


	In general, the restrictions on the coefficients of $f$ and $g$ in Lemma \ref{lem1} are not enough to determine $g$ from $f$.  In Sections \ref{sec:3} and \ref{sec:4}, we will provide some extra restrictions  for $f$ and $g$ in (a) or (b).

	\section{Csajb\'ok-Marino-Montanucci-Zullo family}\label{sec:3}
	In this section, we consider the equivalence problem for the family of maximum scattered linear sets which is found for odd prime power $q\equiv \pm 1,0\pmod{5}$ by Csajb\'ok, Marino and Zullo in \cite{csajbok_newMSLS_2018} and later completely proved for any odd prime power $q$ by Marino, Montanucci and Zullo in \cite{marino_classes_2020}.	Recall that the linear sets in this family are of the shape
	\[
	\left\{\langle(1,,x^{q-1}+x^{q^3-1}+\theta x^{q^5-1}) \rangle_{\F_{q^{6}}} :x\in\F_{q^{6}}^* \right\},
	\]	
	where $\theta^2+\theta=1$.
	By definition, there are at most two elements in this family for given $q$ which is odd.  The following simple lemma tells us whether $\theta$ is in the prime field $\F_p$ of $\F_q$ or in $\F_{p^2}$.
	\begin{lemma}\label{lem:f_1,3,5}
		For positive integer $n$ and odd prime power $q$, suppose that $x_1^2+x_1=x_2^2+x_2=1$,  where $x_1,x_2 \in \F_{q^{n}}$. Then
		\begin{enumerate}[label=(\roman*)]
			\item $x_1,x_2\in \F_q$, if $q\equiv 0, \pm 1 \pmod{5}$;
			\item $x_1,x_2\in \F_{q^2}\setminus \F_q$,  if $q\equiv \pm 2 \pmod{5}$.
		\end{enumerate}
		Moreover,  $x_1^{2q-1}=x_2^{2q-1}$ if and only if $x_1=x_2$. In particular, when $q\equiv 0 \pmod 5$, $x_1=x_2=2$.
	\end{lemma}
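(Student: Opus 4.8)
The plan is to reduce the whole statement to the quadratic $X^2+X-1$, whose roots (with multiplicity) are exactly $x_1$ and $x_2$, since each hypothesis $x_i^2+x_i=1$ says precisely that $x_i$ is a root. Its discriminant is $1+4=5$, so everything is governed by the behaviour of $5$ in $\F_q$. As $q$ is odd, $2$ is invertible and the quadratic formula gives the roots as $\tfrac{-1\pm\sqrt 5}{2}$. Hence both roots lie in $\F_q$ exactly when $\sqrt 5\in\F_q$, i.e.\ when $5$ is a nonzero square in $\F_q$, and otherwise $X^2+X-1$ is irreducible over $\F_q$, so its two roots are conjugate and lie in $\F_{q^2}\setminus\F_q$. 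The degenerate case $q\equiv 0\pmod 5$, i.e.\ $p=5$, I would treat first and separately: there $5=0$ and $X^2+X-1=(X-2)^2$, so $x_1=x_2=2\in\F_q$, which simultaneously settles part (i) in this case and the final assertion of the lemma.

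To finish parts (i) and (ii) I must translate ``$5$ is a square in $\F_q$'' into a congruence on $q$ modulo $5$ (assuming now $p\neq 5$). The cleanest route, and the one I would take, is a Gauss-sum/Frobenius argument: fix a primitive fifth root of unity $\zeta\in\overline{\F_q}$ and form $g=\sum_{a=1}^{4}\left(\tfrac{a}{5}\right)\zeta^a$, the quadratic Gauss sum modulo $5$, which satisfies $g^2=5$. The Frobenius map $\phi\colon x\mapsto x^q$ sends $\zeta\mapsto\zeta^q$ and hence $g\mapsto\left(\tfrac{q}{5}\right)g$, so $\sqrt 5=\pm g$ is fixed by $\phi$, i.e.\ lies in $\F_q$, if and only if $\left(\tfrac{q}{5}\right)=1$, that is $q\equiv\pm1\pmod 5$. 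Since the quadratic residues modulo $5$ are $\{1,4\}$, this gives $x_1,x_2\in\F_q$ for $q\equiv\pm1\pmod 5$ and $x_1,x_2\in\F_{q^2}\setminus\F_q$ for $q\equiv\pm2\pmod 5$, as claimed. An alternative is Euler's criterion together with quadratic reciprocity for $p$, but then one must also track the parity of the degree $e$ in $q=p^e$; the Gauss-sum computation bypasses this case split, and this translation step is where I expect to need the most care.

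For the ``moreover'' part I would use Vieta's relations $x_1+x_2=-1$ and $x_1x_2=-1$. The implication $x_1=x_2\Rightarrow x_1^{2q-1}=x_2^{2q-1}$ is trivial, so assume $x_1\neq x_2$ (forcing $p\neq 5$) and show $x_1^{2q-1}\neq x_2^{2q-1}$, splitting according to where the roots live. If $x_1,x_2\in\F_q$ (the case $q\equiv\pm1\pmod5$), then $x_i^q=x_i$ gives $x_i^{2q-1}=x_i$, so $x_1^{2q-1}=x_2^{2q-1}$ would force $x_1=x_2$. If instead $x_1,x_2\in\F_{q^2}\setminus\F_q$, they are Frobenius-conjugate, $x_1^q=x_2$ and $x_2^q=x_1$, and a short computation reduces the equality $x_1^{2q-1}=x_2^{2q-1}$ to $(x_1/x_2)^3=1$. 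Writing $r=x_1/x_2$, the relations give $r+r^{-1}=(x_1^2+x_2^2)/(x_1x_2)=-3$, so $r^2+3r+1=0$; if one had $r^3=1$ with $r\neq1$ then $r^2+r+1=0$, i.e.\ $r+r^{-1}=-1$, and comparing with $r+r^{-1}=-3$ yields $2=0$, impossible since $q$ is odd. Hence $r^3\neq1$ and $x_1^{2q-1}\neq x_2^{2q-1}$, completing the proof.
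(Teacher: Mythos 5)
Your proposal is correct, and while it follows the same overall skeleton as the paper's proof (reduce everything to the quadratic $X^2+X-1$ with discriminant $5$, dispose of $p=5$ separately, and split the ``moreover'' part according to whether the roots lie in $\F_q$ or are Frobenius conjugates in $\F_{q^2}$), it differs in the two substantive steps. For deciding when $5$ is a square in $\F_q$, the paper argues via quadratic reciprocity over the prime field and then tracks the parity of the degree $e$ in $q=p^e$ (observing that $q\equiv\pm2\pmod 5$ forces $p\equiv\pm2\pmod 5$ and $e$ odd); your Gauss-sum computation $g^2=5$, $g^{q}=\left(\tfrac{q}{5}\right)g$ reaches the criterion $\left(\tfrac{q}{5}\right)=1\iff q\equiv\pm1\pmod 5$ in one stroke and avoids that case split, at the cost of invoking slightly heavier machinery. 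For the converse of $x_1^{2q-1}=x_2^{2q-1}\Rightarrow x_1=x_2$ in the conjugate case, the paper substitutes $x_1=x_2^q$ and $x_2^{q+1}=-1$ to get $x_2^6=-1$, then a cubing computation pins $x_2=3/4$, contradicting $x_2^2+x_2=1$ when $p\neq 2,5$; you instead reduce the equality to $(x_1/x_2)^3=1$ and play Vieta's relations ($r+r^{-1}=-3$) against $r^2+r+1=0$ to get $2=0$. Both are valid; yours is marginally cleaner in that it never produces an explicit rational value for the root, and both arguments correctly isolate characteristic $2$ and $5$ as the only degenerate cases. One point worth making explicit in a final write-up: in the conjugate case you use $x_2\neq 0$ (immediate from $x_1x_2=-1$) to form the ratio $r$, and $r\neq 1$ from the standing assumption $x_1\neq x_2$.
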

	
	\begin{proof}
		
		Let  $q=p^m$ such that  $p$ is prime. By the law of quadratic reciprocity, $5$ is a square in $\mathbb{F}_{p}$ if and only if $p=0,1$ or $4\pmod {5}$. 
		
		Suppose that $p=5$. The two roots $x_1$ and $x_2$ of the polynomial $X^2+X-1$ are identical and $x_1=x_2=2\in \F_p\subseteq \F_q$.  
		
		Suppose that $q\equiv \pm1 \pmod {5}$. When $p=1$ or $4\pmod {5}$,  $5$ is a square in $\mathbb{F}_{p}$ whence $5$ is a square in $\F_q$. When $p=2$ or $3\pmod {5}$ which means $q=p^{2k}$ for some $k\in \Z$, $5$ is also a square in $\mathbb{F}_{q}$.
		
		Suppose that $q\equiv \pm2 \pmod {5}$.  Then  $p=2$ or $3\pmod {5}$ and $q=p^{2k+1}$ for some $k\in \Z$. Hence  $5$ is not a square in $\mathbb{F}_{q}$.
		
		Next we show that the necessary and sufficient condition of $x_1^{2q-1}=x_2^{2q-1}$ is  $x_1=x_2$.
		The sufficiency is obvious. 
		
		Assume that  $x_1^{2q-1}=x_2^{2q-1}$. If $ q\equiv0,\pm 1 \pmod 5$, by (a) we get $x_1,x_2\in\F_q$. From $x_1=x_1^{2q-1}=x_2^{2q-1}=x_2$, we derive $ x_1=x_2$.  
		
		If $ q\equiv \pm 2 \pmod 5$, by (b) we get $x_1,x_2\in\F_{q^2} \setminus \mathbb{F}_{q}$. As $x_1,x_2$ are two (not necessarily distinct) roots of $x^2+x-1$, $x_1=x_2$ or $x_1=x_2^q$. Assume that $x_1=x_2^q$, by $x_1^{2q-1}=x_2^{2q-1}$ and $x_2^{q+1}=-1$ we get $x_2^6=-1$. As $x_2^2+x_2=1$,   $$(1-x_2)^3=-x_2^3+3x_2^2-3x_2+1=x_2(4x_2-4)+1=5-8x_2=-1,$$ which implies $$x_2=3/4.$$ However, this contradicts $x_2^2+x_2=1$ under the assumption that $p \neq 2,5$.  Therefore $x_1$ must be identical to $x_2$.
	\end{proof}
	
	Before looking at maximum scattered linear sets in the Csajb\'ok-Marino-Montanucci-Zullo family, we first consider the equivalence between the $6$-dimensional $\F_{q}$-subspaces $U_f$ associated with different $L_f$ from this family. Depending on the value of $q$ mod $5$, our result are divided into two lemmas.
	\begin{lemma}\label{le:U_equivalence_0}
		For odd prime power $q$ with $ q\equiv \pm 2 \pmod 5$, define functions $f(x)=x^q+x^{q^3}+\theta x^{q^5}$, $g(x)=x^q+x^{q^3}+\delta x^{q^5}$ on $\F_{q^6}$ with $\theta$, $\delta \in \F_{q^6}$ satisfying $\theta ^2+ \theta =1$ and $\delta^2+\delta=1$. Then $\F_q$-subspaces $U_f$ and $U_g$ are $\GamL$-equivalent.
	\end{lemma}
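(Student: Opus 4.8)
The plan is to exhibit an explicit element of $\GamL(2,q^6)$ carrying $U_f$ onto $U_g$, and the crux is that for $q\equiv\pm2\pmod 5$ the two admissible values of the top coefficient are Galois conjugate over $\F_q$. First I would invoke Lemma~\ref{lem:f_1,3,5}(ii): since $q\equiv\pm2\pmod5$, every root of $X^2+X-1$ lies in $\F_{q^2}\setminus\F_q$. If $\theta=\delta$ then $f=g$ and there is nothing to prove, so I may assume $\theta\neq\delta$. Then $\theta$ and $\delta$ are the two distinct roots of $X^2+X-1$, which is irreducible over $\F_q$ because neither root lies in $\F_q$. Applying the Frobenius $x\mapsto x^q$ to $\theta^2+\theta-1=0$ shows that $\theta^q$ is again a root, and $\theta^q\neq\theta$ since $\theta\notin\F_q$; hence $\theta^q=\delta$.

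The candidate map is the field-Frobenius element $\sigma\in\GamL(2,q^6)$ acting as $\sigma\colon(x,y)\mapsto(x^q,y^q)$. This is semilinear with underlying matrix the identity and companion automorphism $z\mapsto z^q$, so it genuinely lies in $\GamL(2,q^6)$. I would then compute the image of a generic point $(x,f(x))$ of $U_f$. Writing $z=x^q$, so that $x=z^{q^5}$ because the Frobenius has order $6$ on $\F_{q^6}$, I would expand
\[
f(x)^q=x^{q^2}+x^{q^4}+\theta^q x,
\]
and re-express the exponents in terms of $z$ using $x^{q^2}=z^q$, $x^{q^4}=z^{q^3}$, and $x=z^{q^5}$.

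Substituting $\theta^q=\delta$ then yields $f(x)^q=z^q+z^{q^3}+\delta z^{q^5}=g(z)$, so $\sigma(x,f(x))=(z,g(z))$. As $x$ ranges over $\F_{q^6}$ the value $z=x^q$ ranges over all of $\F_{q^6}$ as well, whence $U_f^\sigma=U_g$ and the two $\F_q$-subspaces are $\GamL$-equivalent.

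I do not expect a genuine obstacle here: the entire content sits in the conjugacy relation $\delta=\theta^q$, which is precisely what the hypothesis $q\equiv\pm2\pmod5$ supplies through Lemma~\ref{lem:f_1,3,5}. The only points demanding care are the bookkeeping of Frobenius exponents modulo $6$ and the recognition that no nontrivial $\F_{q^6}$-linear part is needed---the raw Frobenius automorphism alone already realizes the equivalence.
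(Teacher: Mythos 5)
Your proof is correct and follows essentially the same route as the paper: reduce to the case $\theta\neq\delta$, observe that irreducibility of $X^2+X-1$ over $\F_q$ forces $\delta=\theta^q$, and then check that the componentwise Frobenius $(x,y)\mapsto(x^q,y^q)$, an element of $\GamL(2,q^6)$, carries one subspace onto the other. The only cosmetic difference is the direction of the map (the paper sends $U_g$ to $U_f$); the exponent bookkeeping $x^{q^2}=z^q$, $x^{q^4}=z^{q^3}$, $x=z^{q^5}$ matches the paper's computation exactly.
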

	\begin{proof}
		As $\theta,\delta$ are the roots of $x^2+x-1$ which is irreducible over $\F_{q}$, $\theta=\delta$ or $\theta=\delta^q$.

		If $\theta=\delta$, then $U_f=U_g$;	if $\theta=\delta^q$, then
		\[
		\left\{(x^q,(x^q+x^{q^3}+\delta x^{q^5})^q):x\in\F_{q^6}\right\}=\left\{(x,x^q+x^{q^3}+\delta^q x^{q^5}):x\in\F_{q^6}\right\}=U_f. 
		\]
		Therefore, $U_f$ and $U_g$ are $\GamL$-equivalent.
	\end{proof}
	
	\begin{lemma}\label{le:U_equivalence_1}
		For odd prime power $q$ with $ q\equiv 0,\pm 1 \pmod 5$, define functions $f(x)=x^q+x^{q^3}+\theta x^{q^5}$, $g(x)=x^q+x^{q^3}+\delta x^{q^5}$ on $\F_{q^6}$ with $\theta$, $\delta \in \F_{q^6}$ satisfying $\theta ^2+ \theta =1$ and $\delta^2+\delta=1$. If $\F_q$-subspace $U_f$ and $U_g$ are $\GamL$-equivalent,
		then  $\theta=\delta.$
	\end{lemma}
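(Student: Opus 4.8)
The plan is to convert the hypothetical $\GamL$-equivalence into a single polynomial identity and then read off enough relations among the entries of the transforming matrix to pin down $\delta$. Write the equivalence as $\varphi(u,v)=M(u,v)^{\sigma}$ with $M=\left(\begin{smallmatrix}a&b\\ c&d\end{smallmatrix}\right)\in\GL(2,q^{6})$ and $\sigma\in\Aut(\F_{q^{6}})$, say $\sigma\colon x\mapsto x^{p^{j}}$ where $q=p^{e}$. Applying $\sigma$ to $f$ and putting $X=x^{\sigma}$, the requirement $U_{f}^{\varphi}=U_{g}$ becomes the identity
\[
g\bigl(aX+b\tilde f(X)\bigr)=cX+d\tilde f(X)\qquad\text{for all }X\in\F_{q^{6}},
\]
where $\tilde f(X)=X^{q}+X^{q^{3}}+\theta^{\sigma}X^{q^{5}}$. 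Since $q\equiv 0,\pm1\pmod 5$, Lemma \ref{lem:f_1,3,5} gives $\theta,\delta\in\F_q$, hence $\theta^{\sigma}\in\F_q$ as well; I use this repeatedly to collapse the twists $(\theta^{\sigma})^{q^{i}}=\theta^{\sigma}$.

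Expanding both sides as $q$-polynomials of degree $<q^{6}$ (using $X^{q^{6}}=X$) and comparing the coefficients of $X^{q^{i}}$ for $i=0,\dots,5$ yields six equations. The coefficients of $X^{q}$ and $X^{q^{3}}$ give at once $a^{q}=d=a^{q^{3}}$, so $a^{q^{2}}=a$ and thus $a\in\F_{q^{2}}$. The coefficients of $X^{q^{2}}$ and $X^{q^{4}}$ read
\[
b^{q}+\theta^{\sigma}b^{q^{3}}+\delta b^{q^{5}}=0,\qquad b^{q}+b^{q^{3}}+\delta\theta^{\sigma}b^{q^{5}}=0,
\]
whose difference factors as $(\theta^{\sigma}-1)\bigl(b^{q^{3}}-\delta b^{q^{5}}\bigr)=0$.

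The core step is to force $b=0$. As $\theta^{\sigma}$ is a root of $X^{2}+X-1$ it cannot equal $1$ in odd characteristic, so $b^{q^{3}}=\delta b^{q^{5}}$. If $b\neq0$, set $\beta=b^{q^{3}}$; then $\beta^{1-q^{2}}=\delta$, and applying the norm $N_{q^{6}/q^{2}}$ sends the left-hand side to $\beta^{1-q^{6}}=1$, while $\delta\in\F_q\subseteq\F_{q^{2}}$ gives $N_{q^{6}/q^{2}}(\delta)=\delta^{3}$. Thus $\delta^{3}=1$, which together with $\delta^{2}+\delta=1$ forces $\delta=1$ and the absurdity $2=1$. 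Hence $b=0$, and then the $X^{q^{0}}$-equation gives $c=0$. (The degenerate case $a=0$ is impossible: it forces $d=0$, so $bc\neq0$ by invertibility and in particular $b\neq0$, but the relation $b^{q^{3}}=\delta b^{q^{5}}$ then produces the identical contradiction.) This is exactly where the hypothesis $q\equiv 0,\pm1\pmod 5$ is essential, via $\delta\in\F_q$ and $N_{q^{6}/q^{2}}(\delta)=\delta^{3}$; its failure for $q\equiv\pm2$ is precisely what permits the genuine equivalences of Lemma \ref{le:U_equivalence_0}.

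With $b=c=0$ and $a\in\F_{q^{2}}^{*}$, so that $a^{q^{5}}=a^{q}$, the surviving $X^{q^{5}}$-equation $\delta a^{q^{5}}=d\,\theta^{\sigma}=a^{q}\theta^{\sigma}$ reduces to $\delta=\theta^{\sigma}$. The final step, which I expect to be the genuine obstacle, is to upgrade $\delta=\theta^{\sigma}$ to $\delta=\theta$. Since $\theta^{\sigma}=\theta^{p^{j}}$ is again a root of $X^{2}+X-1$, we have $\delta\in\{\theta,\theta'\}$ with $\theta'=-1-\theta$, and what remains is to show that the semilinear twist cannot interchange the two roots. This is automatic when $\theta\in\F_p$ (equivalently when $5$ is a square in $\F_p$, i.e. $p\equiv 0,\pm1\pmod 5$), since then every automorphism fixes $\theta$; the delicate case is $\theta\in\F_q\setminus\F_p$, where the prime-field Frobenius already swaps $\theta$ and $\theta'$ and the relations above do not on their own exclude $\theta^{\sigma}=\theta'$. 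Resolving this last point — by a further constraint forcing $\sigma\in\Gal(\F_{q^{6}}/\F_q)$, or by treating the subcase $\theta\notin\F_p$ separately — is in my view the crux of the lemma.
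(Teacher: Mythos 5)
Your computation follows the same route as the paper's proof: comparison of $q$-power coefficients, the relations $a^q=d=a^{q^3}$ forcing $a\in\F_{q^2}$, and the norm argument $N_{q^6/q^2}(\delta)=\delta^3=1$ (hence $\delta=1$, absurd) to exclude $b\neq 0$; your handling of the degenerate case $a=0$ is also fine. All of this is correct. The one difference is that you carry the field automorphism $\sigma$ through the computation, while the paper does not: its identity $cX+df(X)\equiv g(aX+bf(X))$ contains no twist on $\theta$, i.e.\ it silently treats only the $\F_{q^6}$-linear part of the equivalence (equivalently, only $\sigma\in\Gal(\F_{q^6}/\F_q)$, which fixes $\theta\in\F_q$). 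Under that reading the diagonal case immediately gives $\theta=\delta$ and there is nothing left to do.

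The ``crux'' you flag at the end is therefore not a defect of your argument but of the statement itself, under the standard convention that $\GamL(2,q^6)$ allows $\sigma$ to range over all of $\Aut(\F_{q^6})$. Concretely, take $p\equiv\pm2\pmod 5$ and $q=p^{2k}$, so that $q\equiv\pm1\pmod 5$ but $\theta\in\F_{p^2}\setminus\F_p$; then $(x,y)\mapsto(x^p,y^p)$ lies in $\GamL(2,q^6)$ and maps $U_f$ onto $U_g$ with $\delta=\theta^p=-1-\theta\neq\theta$. So the strongest true conclusion is exactly what you proved, namely $\delta=\theta^{\sigma}$, i.e.\ $\delta\in\{\theta,-1-\theta\}$; the upgrade to $\delta=\theta$ holds precisely when $\theta\in\F_p$ and cannot be forced otherwise. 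Do not look for a further constraint excluding $\theta^\sigma=-1-\theta$ --- none exists; either restrict $\sigma$ to $\Gal(\F_{q^6}/\F_q)$ or weaken the conclusion. This does not affect the rest of the section: Theorem \ref{th:LP_equivalence} shows all members are $\PGamL$-equivalent by a separate adjoint argument, and the contrast drawn between $U$- and $L$-equivalence survives for the linear part.
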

	\begin{proof}
		Assume that $\F_q$-subspace $U_f$ and $U_g$ are $\GamL$-equivalent.
		By Lemma \ref{lem:f_1,3,5}, $\theta,\delta\in\F_{q}$. We only have to consider the existence of invertible matrix $M=\begin{pmatrix}
		a & b   \\ 
		c& d  
		\end{pmatrix}$ over $\F_{q^6}$ such that  for each $x\in\F_{q^6}$ there exists $y\in\F_{q^6}$ satisfying 
		\begin{equation*}
		\begin{pmatrix}
		a & b   \\ 
		c& d  
		\end{pmatrix}
		\begin{pmatrix}
		x  \\ 
		f(x) 
		\end{pmatrix}=\begin{pmatrix}
		y  \\ 
		g(y)  
		\end{pmatrix}.
		\end{equation*} 
		
		This is equivalent to 
		\begin{equation}\label{eq:u_1}
		c X+d f(X)\equiv g\left(a X+bf(X)\right) \pmod{X^{q^6}-X}.
		\end{equation}
		Then the right-hand-side of \eqref{eq:u_1} is 
		\begin{align*}
		&\left(b^q\theta+b^{q^3}+b^{q^5}\delta\right)X+a^q X^q+\left(b^q+b^{q^3}\theta+b^{q^5}\delta\right)X^{q^2}\\
		&+a^{q^3}X^{q^3}+\left(b^q+b^{q^3}+b^{q^5}\theta \delta\right)X^{q^4}+a^{q^5}\delta X^{q^5}.
		\end{align*}
		By the coefficients of $X^q,X^{q^3}$ and $X^{q^5}$ in \eqref{eq:u_1}, we get 
		\begin{equation}\label{eq:u_1,3,5}
		\begin{cases}
		d=a^q=a^{q^3},\\
		d\theta=a^{q^5}\delta.
		\end{cases}
		\end{equation}
		
		Depending on the value of $b$, we consider two cases.
		\medskip
		
		\noindent\textbf{Case 1:} $b=0$. By the coefficients of $X$ in \eqref{eq:u_1}, we get $c=0$ and $ad\neq 0$. Then  \eqref{eq:u_1,3,5} is equivalent to  $a,d\in\F_{q^2}$ and 
		\begin{equation*}	
		\begin{cases}
		d=a^q,\\
		\theta=\delta.
		\end{cases}
		\end{equation*}
		
		\medskip
		\noindent\textbf{Case 2:} $b\neq 0$. By the coefficients of $X^{q^2}$ and $X^{q^4}$ in \eqref{eq:u_1}, we get
		\begin{equation}\label{eq:u_b_1_2}
		\begin{cases}
		b^q+b^{q^3}\theta+b^{q^5}\delta=0,\\
		b^q+b^{q^3}+b^{q^5}\theta \delta=0.
		\end{cases}
		\end{equation}
		
		By  \eqref{eq:u_b_1_2}, we obtain 
		$$\left(b^{q^3}-b^{q^5}\delta\right)\left(1-\theta \right)=0,$$
		which means at least one of $b^{q^3}-b^{q^5}\delta$ and $1-\theta$ equals $0$. Clearly  $1-\theta=0$ contradicts the condition $\theta^2+\theta=1$. If $b^{q^3}-b^{q^5}\delta=0$,   then $\delta=b^{q^3-q^5},$
		which implies 
		\[\delta^{q^4+q^2+1}=N_{{q^6}/{q^2}}\left(b^{q^3(1-q^2)}\right)=1.\]
		
		However, by $\delta ^2+ \delta =1$ and $\delta\in \F_{q^2}$, we get
		$$\delta^{q^4+q^2+1}=\delta^3=(1-\delta)\delta=2\delta-1.$$
		Thus the above two equations on $\delta^{q^4+q^2+1}$ tells us $\delta=1$ which contradicts $\delta ^2+ \delta =1$. Therefore, there is no  $b\in \F_{q^6}^*$  such  that $\delta=b^{q^3-q^5}$.
		
		To summarize, we have shown that $\theta$ must be identical to $\delta$ and the matrix $M=\begin{pmatrix}
		a& 0 \\ 
		0& a^q
		\end{pmatrix} $.
	\end{proof}
	
%
	
	The next result completely solve the equivalence of the members in the Csajb\'ok-Marino-Montanucci-Zullo family. In particular, compared with Lemma \ref{le:U_equivalence_1}, we can see that the equivalence of $U_f$ and $U_g$ are not the same as the equivalence of $L_f$ and $L_g$ in this family.
	\begin{theorem}\label{th:LP_equivalence}
		Let  $q$ be an odd  prime power. All elements of the Csajb\'ok-Marino-Montanucci-Zullo family  are $\PGamL$-equivalent. 
	\end{theorem}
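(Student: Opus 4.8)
The plan is to exhibit a single element of $\PGamL(2,q^6)$ that carries one member of the family onto the other. Since the defining parameter $\theta$ ranges over the (at most two) roots of $X^2+X-1$, I write $f=X^q+X^{q^3}+\theta X^{q^5}$ and $g=X^q+X^{q^3}+\delta X^{q^5}$, where $\{\theta,\delta\}$ are these roots, so that $\theta+\delta=-1$ and $\theta\delta=-1$, and both lie in $\F_{q^2}$. If $\theta=\delta$ (which by Lemma \ref{lem:f_1,3,5} happens exactly when $q\equiv 0\pmod 5$) there is nothing to prove, so I may assume $\theta\neq\delta$. The key idea is \emph{not} to look for a $\GamL$-equivalence between $U_f$ and $U_g$ — Lemma \ref{le:U_equivalence_1} shows this fails whenever $\theta,\delta\in\F_q$ are distinct — but to use the extra freedom available at the level of linear sets, namely the coordinate swap together with the adjoint operator.

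Concretely, first I would consider $\tau\in\PGL(2,q^6)$ given by the matrix $\begin{pmatrix}0&1\\1&0\end{pmatrix}$, i.e. $\langle(u,v)\rangle\mapsto\langle(v,u)\rangle$. Provided $f$ is invertible as an $\F_q$-linear map, this sends
\[
L_f=\{\langle(x,f(x))\rangle : x\in\F_{q^6}^*\}
\]
to $\{\langle(f(x),x)\rangle : x\}=\{\langle(y,f^{-1}(y))\rangle : y\in\F_{q^6}^*\}=L_{f^{-1}}$, where $f^{-1}$ denotes the compositional inverse of $f$. Thus $L_f$ and $L_{f^{-1}}$ are automatically $\PGamL$-equivalent, and the problem reduces to identifying $f^{-1}$ and relating it to $g$.

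The crux is the identity $f^{-1}=\hat g$, where $\hat g=\delta^q X^q+X^{q^3}+X^{q^5}$ is the adjoint of $g$. I would prove it directly by checking $f(\hat g(x))\equiv x \pmod{X^{q^6}-X}$: expanding $f(\hat g(x))$ and reducing the Frobenius exponents modulo $6$, only the terms $x,x^{q^2},x^{q^4}$ survive, with coefficients $2+\theta\delta$, $1+\theta+\delta$ and $1+\theta+\delta$ respectively (using $\theta^{q^2}=\theta$, $\delta^{q^2}=\delta$ since $\theta,\delta\in\F_{q^2}$). The two relations $\theta\delta=-1$ and $\theta+\delta=-1$ make these equal to $1,0,0$, so $f\circ\hat g=\mathrm{id}$; in particular $f$ is invertible and $f^{-1}=\hat g$. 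Combining this with Lemma \ref{le:LS_adjoint}, which gives $L_{\hat g}=L_g$, yields $L_f^{\tau}=L_{f^{-1}}=L_{\hat g}=L_g$, so the two members are $\PGamL$-equivalent.

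I expect the main obstacle to be locating the correct transformation rather than the verification itself. The earlier lemmas show that no $\GamL$-map between the subspaces can work when $\theta,\delta\in\F_q$, so one must genuinely exploit that $\PGamL$-equivalence of linear sets only requires the image sets of slopes $\{f(x)/x\}$ to match, not the parametrisations; the coordinate swap realises exactly this, turning the slope set into its reciprocal, which the adjoint relation then re-expresses as the slope set of $g$. The remaining care is bookkeeping: confirming that $f$ (equivalently $\hat g$) is nonsingular, so that $\tau$ maps $L_f$ bijectively and avoids the excluded points $\langle(0,1)\rangle$ and $\langle(1,0)\rangle$, and observing that the single computation above is uniform in $q\bmod 5$, so it also subsumes the case $q\equiv\pm2\pmod 5$ that is otherwise handled by Lemma \ref{le:U_equivalence_0}.
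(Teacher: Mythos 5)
Your proof is correct, and its core mechanism --- the coordinate swap carrying $L_f$ to $L_{f^{-1}}$ followed by the adjoint identity $L_{\hat g}=L_g$ of Lemma \ref{le:LS_adjoint} --- is precisely the paper's argument for the case $q\equiv\pm1\pmod 5$. What you do differently is make that single argument uniform in $q\bmod 5$. The paper writes the inverse of $f_1$ as $h=(-\theta_1-1)X^q+X^{q^3}+X^{q^5}$; the verification $h(f_1(x))=x$ and the identification $\hat h=f_2$ both rely on $\theta_1\in\F_q$, and indeed when $q\equiv\pm2\pmod 5$ (so $\theta_i\in\F_{q^2}\setminus\F_q$) that particular $h$ is no longer the compositional inverse of $f_1$. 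This is why the paper splits off the residue classes $\pm2$ and handles them instead by the $\GamL$-equivalence of Lemma \ref{le:U_equivalence_0}, i.e.\ the Frobenius $x\mapsto x^q$ at the level of the subspaces $U_{f_i}$. Your choice of coefficient $\delta^q$ in $\hat g=\delta^qX^q+X^{q^3}+X^{q^5}$ --- writing the compositional inverse of $f$ directly as the adjoint of $g$ rather than first computing an $h$ and then its adjoint --- makes the one computation $f\circ\hat g=\mathrm{id}$ valid whenever $\theta,\delta\in\F_{q^2}$ satisfy $\theta+\delta=\theta\delta=-1$, which covers both residue classes simultaneously and eliminates the case distinction. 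The trade-off is purely informational: the paper's route records the stronger fact that for $q\equiv\pm2\pmod 5$ the subspaces $U_{f_1}$ and $U_{f_2}$ are already $\GamL$-equivalent (while Lemma \ref{le:U_equivalence_1} shows this fails for $q\equiv\pm1\pmod 5$), a distinction your unified argument does not need and therefore does not exhibit.
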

	\begin{proof}
		By Lemma \ref{lem:f_1,3,5}, when $q\equiv 0 \pmod 5$  there are only one element in this family and when $q\equiv \pm 1,\pm 2 \pmod 5$ there are only two elements. 

		For $q\not \equiv 0 \pmod 5$, let $f_i(x)=x^q+x^{q^3}+\theta_i x^{q^5}$ with $i=1,2$ stand for the two members in the  family.
		
	    When $q\equiv\pm 2 \pmod 5$, Lemma \ref{le:U_equivalence_0} tells us that $U_{f_1}$ and $U_{f_2}$ are $\GamL$-equivalent, which means that  $L_{f_1}$ and $L_{f_2}$ are equivalent by Lemma \ref{lem:subeq_seteq}.
		
		When $q\equiv \pm 1 \pmod 5$, by Lemma \ref{lem:f_1,3,5} we get $\theta_1,\theta_2 \in \F_{q}$. 
		Let $h(x)=(-\theta_1 -1)x^q+x^{q^3}+x^{q^5}$. Then it is straightforward to verify $h(f_1(x))=x$, i.e.\ $h(x)$ is the inverse map of $f_1(x)$. Hence
		\begin{equation*}
		\left\{ \frac{x}{f_1(x)}: x\in \F_{q^6}^*  \right\}=\left\{\frac{(-\theta_1 -1)x^q+x^{q^3}+x^{q^5}}{x} : x\in \F_{q^6}^* \right  \}=\left\{\frac{h(x)}{x} : x\in \F_{q^6}^* \right  \},
		\end{equation*}
		which means $L_{f_1}$ is $\PGamL$-equivalent to $L_{h}$.
		
		By a simple calculation, $$\hat{h}(x)= x^q +x^{q^3} + \left(-\theta_1-1\right)^{q^5}x^{q^5}= x^q +x^{q^3} + \theta_2 x^{q^5}=f_2(x).$$ By Lemma \ref{le:LS_adjoint}, $L_{h}=L_{\hat{h}}=L_{f_2}$. Therefore $L_{f_1}$ is $\PGamL$-equivalent to $L_{f_2}$.
	\end{proof}

	By Theorem \ref{th:LP_equivalence}, the Csajb\'ok-Marino-Montanucci-Zullo family contains only one member up to equivalence. Next we determine the automorphism group of this element.
	\begin{theorem}
		Let  $q$ be an odd  prime power. Let $f(x)=x^q+x^{q^3}+\theta x^{q^5}$ over $\F_{q^6}$ with $\theta^2+\theta=1$. The automorphism  group of $L_f$ is
				\[\Aut(L_f)=\begin{cases}
					\mathcal{D}, & q\equiv\pm 1 \pmod 5;\\
					\mathcal{D}\cup\mathcal{C}, & q\equiv0,\pm 2 \pmod 5,
				\end{cases}
				\]
	where
	$$\mathcal{D}:=\left\{M\tau: M=	\begin{pmatrix} 
		1 & 0   \\ 
		0 & d  
	\end{pmatrix}\in\mathrm{PGL}(2,q^6), \tau\in \mathrm {Aut}(\F_{q^6}),\theta^{\tau-1}=1,d^{q+1}=1 \right\},$$
	
	and
	$$\mathcal{C}:=\left\{M\tau: M=	\begin{pmatrix} 
		0 & 1   \\ 
		c & 0  
	\end{pmatrix}\in\mathrm{PGL}(2,q^6), \tau\in \mathrm {Aut}(\F_{q^6}), \theta^{\tau+1}=-1,  c^{q+1}=1 \right\}.$$
	\end{theorem}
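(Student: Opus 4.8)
# Proof Proposal

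The plan is to compute the automorphism group of $L_f$ by determining all $\varphi \in \PGamL(2,q^6)$ that fix $L_f$ setwise. Since $L_f$ is a linear set of rank $6$ not containing $\langle(0,1)\rangle_{\fqn}$, each such $\varphi$ corresponds to a matrix $M = \begin{pmatrix} a & b \\ c & d \end{pmatrix} \in \GL(2,q^6)$ together with a field automorphism $\tau \in \Aut(\F_{q^6})$, and the condition $L_f^{M\tau} = L_f$ translates into the requirement that the image set $\{x/f(x) : x \in \F_{q^6}^*\}$ is preserved under the induced M\"obius-type action. Concretely, I would first reduce to the functional equation
\begin{equation*}
    c X + d f^{\tau}(X) \equiv g\bigl(a X + b f^{\tau}(X)\bigr) \pmod{X^{q^6}-X},
\end{equation*}
where $g = f$ (self-equivalence), and $f^{\tau}$ denotes applying $\tau$ to the coefficients of $f$. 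This is exactly the setup appearing in the proofs of Lemmas \ref{le:U_equivalence_0} and \ref{le:U_equivalence_1}, now applied with $f = g$ and an extra Frobenius twist $\tau$ folded in.

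Next, I would expand the right-hand side and match coefficients of $X^{q^i}$ for $i = 0, 1, \ldots, 5$, just as in Lemma \ref{le:U_equivalence_1}. The coefficients of $X^q, X^{q^3}, X^{q^5}$ give the analogue of \eqref{eq:u_1,3,5}, namely relations of the form $d = a^{q}, d = a^{q^3}$, and $d\,\theta^{\tau} = a^{q^5}\delta'$ where $\delta' = \theta$ (after accounting for the twist), while the coefficients of $X^{q^2}, X^{q^4}$ give the analogue of \eqref{eq:u_b_1_2}. The case split on whether $b = 0$ should then produce the two families $\mathcal{D}$ and $\mathcal{C}$: the diagonal matrices (with $b = c = 0$) will force $d^{q+1} = 1$ together with a constraint $\theta^{\tau - 1} = 1$ governing which $\tau$ survive, while the anti-diagonal matrices (with $a = d = 0$, $b \neq 0$) will correspond to the adjoint/inversion symmetry witnessed in Theorem \ref{th:LP_equivalence} and will force $c^{q+1} = 1$ and $\theta^{\tau + 1} = -1$. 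The off-diagonal case $b \neq 0, c \neq 0$ (general $M$) must be ruled out, presumably by the same Norm argument as in Case 2 of Lemma \ref{le:U_equivalence_1}, which shows $\delta = b^{q^3 - q^5}$ leads to a contradiction with $\delta^2 + \delta = 1$.

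The dependence of the answer on $q \bmod 5$ should enter through the compatibility of the constraints $\theta^{\tau-1}=1$ and $\theta^{\tau+1}=-1$ with the field of definition of $\theta$, exactly as quantified in Lemma \ref{lem:f_1,3,5}. Specifically, when $q \equiv \pm 1 \pmod 5$ we have $\theta \in \F_q$, so any $\tau \in \Aut(\F_{q^6})$ fixes $\theta$; the equation $\theta^{\tau+1} = \theta^2 = -1$ then has no solution (since $\theta^2 = 1 - \theta \neq -1$ as $\theta \neq 2$ when $p \neq 5$), so the anti-diagonal family $\mathcal{C}$ collapses and only $\mathcal{D}$ survives. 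By contrast, when $q \equiv \pm 2 \pmod 5$ we have $\theta \in \F_{q^2}\setminus\F_q$ with $\theta^{q} = \theta' = -1-\theta$, so there exists a twist $\tau$ realizing $\theta^{\tau+1} = \theta \cdot \theta^q = \theta(-1-\theta) = -(\theta + \theta^2) = -1$, making $\mathcal{C}$ nonempty; and when $q \equiv 0 \pmod 5$ one has $\theta = 2$ with $\theta^{\tau+1} = 4 = -1$ in characteristic $5$, again giving $\mathcal{C}$. The main obstacle I anticipate is the bookkeeping of the Frobenius twist $\tau$ throughout the coefficient comparison — getting the exponents of $q$ aligned correctly so that the surviving solutions for $(a, b, c, d, \tau)$ assemble precisely into the two prescribed cosets, and verifying that no spurious semilinear maps slip through the off-diagonal case. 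I would handle this by writing $\tau = \phi^j$ for the Frobenius $\phi: x \mapsto x^p$ and tracking how $\tau$ permutes the exponents $q^i$, then checking closure under composition to confirm $\mathcal{D}$ is a subgroup and $\mathcal{D} \cup \mathcal{C}$ is a group with $\mathcal{D}$ of index two.
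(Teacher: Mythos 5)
There is a genuine gap at the very first step: you reduce the condition $L_f^{M\tau}=L_f$ to the polynomial identity $cX+df^{\tau}(X)\equiv f\bigl(aX+bf^{\tau}(X)\bigr)\pmod{X^{q^6}-X}$. That identity characterizes the stabilizer of the \emph{subspace} $U_f$ in $\GamL(2,q^6)$, not the stabilizer of the \emph{point set} $L_f$ in $\PGamL(2,q^6)$, and for this family the two differ — this is precisely the point of contrasting Lemma \ref{le:U_equivalence_1} with Theorem \ref{th:LP_equivalence}. If you carry out your plan literally in the anti-diagonal case $a=d=0$, $b=1$, the functional equation forces $f\circ f^{\tau}=c\,\mathrm{id}$ as polynomials, i.e.\ $f^{\tau}=cf^{-1}$; comparing with the explicit inverse $f^{-1}=(-\theta-1)X^{q}+X^{q^3}+X^{q^5}$ gives $c=1$ and $\theta^{\tau}=1$, a contradiction. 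So your route yields $\mathcal{C}=\varnothing$ for \emph{every} $q$, contradicting the statement for $q\equiv 0,\pm 2\pmod 5$. The coset $\mathcal{C}$ exists only at the level of $L_f$: it comes from composing the inversion $y\mapsto c/y$ (which replaces $f(x)/x$ by $c\,h(x)/x$ with $h=\bar f^{-1}$) with the adjoint identity $L_h=L_{\hat h}$ of Lemma \ref{le:LS_adjoint}, and the latter is invisible to the functional equation since $U_h\neq U_{\hat h}$. The paper therefore works throughout with the set equality \eqref{eq:LP_main_equivalence_1,3,5} and extracts necessary conditions from the power-sum identities of Lemma \ref{lem1}, then proves sufficiency via the substitution $d=\alpha^{q-1}$ and the adjoint lemma.

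A second, related gap: you propose to rule out a general matrix ($b\neq 0$, $a\neq 0$) "by the same Norm argument as in Case 2 of Lemma \ref{le:U_equivalence_1}," i.e.\ the contradiction from $\delta=b^{q^3-q^5}$. That argument again lives at the $U_f$-level and does not transfer. The paper's actual argument is different and more delicate: it introduces the inverse map $h(x)=\sum r_ix^{q^i}$ of $x\mapsto(ax+\bar f(x))/(c-ad)$, uses \eqref{eq:k_1_n-1} and \eqref{eq:k_2_n-1} to force $r_2=r_4=0$ and $r_1,r_5\neq 0$, and then combines the recursion \eqref{eq:d=0_2} with a norm computation to reach $\theta^{3\tau}=1$, contradicting $\theta^2+\theta=1$. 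Your final observations about which $\tau$ satisfy $\theta^{\tau+1}=-1$ in the three congruence classes are essentially correct and match the paper's concluding discussion, but they rest on the two unsupported reductions above, so the proof as proposed does not go through.
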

	
	\begin{proof}

		Let $M=
		\begin{pmatrix}
			a & b   \\ 
			c& d  
		\end{pmatrix}$ be an element in $\mathrm{PGL}(2,q^6)$ and  $\tau \in \mathrm {Aut}(\F_{q^6}/\F_q)$. Assume that $M\tau\in\mathrm{P\Gamma L}(2,q^6)$ is an automorphism of $L_f$,  that is
		\begin{equation}\label{eq:LP_main_equivalence_1,3,5}
		\left\{ \frac{f(x)}{x} : x\in \F_{q^6}^*  \right\} = \left\{\frac{cx+d\bar{f}(x)}{ax+b\bar{f}(x)}:x\in \mathbb{F}_{q^6}^*\right\},
		\end{equation} 
		where $\bar{f}(x)=x^{q}+x^{q^3}+\theta^\tau x^{q^5}$. 
		
		
		Next we determine $M$ and $\tau$. Depending on the value of $b$, we separate the rest part into two cases.
		
		\medskip
		
		\noindent\textbf{Case 1:} $b=0$. As $M$ is viewed as an element in $\PGL(2,q^6)$, we can assume that $a=1$ and \eqref{eq:LP_main_equivalence_1,3,5} becomes 
		\begin{equation}\label{eq:add_case1}
		\left\{ x^{q-1}+x^{q^3-1}+\theta x^{q^5-1} : x\in \F_{q^6}^*  \right\} = \left\{ c+d\frac{\bar{f}(x)}{x}: x\in \F_{q^6}^*  \right\}.
		\end{equation}
		
		By \eqref{eq:a_0_b_0}, \eqref{eq:k_1_n-1} for $k=1$ and $3$ in Lemma \ref{lem1}, respectively,  we derive $c=0$,
		\begin{equation}\label{eq:f_1,3,5_d1}
		    d^{q+1}=\left(\frac{\theta }{\theta^\tau}\right) ^q,
		\end{equation}
		and
		\begin{equation}\label{eq:f_1,3,5_d2}
		    d^{q^3+1}=1.
		\end{equation}
		By \eqref{eq:f_1,3,5_d1} and \eqref{eq:f_1,3,5_d2}, we can get 
		\begin{equation}\label{eq:f_1,3,5_b_0}
		\theta^{\tau \left(q^3-q^2+q\right)}=\theta^{q^3-q^2+q}.
		\end{equation}
		By Lemma \ref{lem:f_1,3,5}, \eqref{eq:f_1,3,5_b_0} is equivalent to 
		
		\begin{equation}\label{eq:f_1,3,5_1}
		\theta^\tau=\theta.
		\end{equation}
		By \eqref{eq:f_1,3,5_d1}, \eqref{eq:f_1,3,5_d2} and \eqref{eq:f_1,3,5_1}, 
		\begin{equation} 
		\label{eq:f_1,3,5_d3}
		d^{q+1}=1.
		\end{equation}

		Then  \eqref{eq:f_1,3,5_1} and \eqref{eq:f_1,3,5_d3} are the necessary conditions.
		Next, we  show that \eqref{eq:f_1,3,5_1} and \eqref{eq:f_1,3,5_d3} are also sufficient. By \eqref{eq:f_1,3,5_d3}, it is easy to see that there exists $\alpha \in \F_{q^2}^*$ such that $d=\alpha^{q-1}$. Replacing $x$ by $\alpha x$ in \eqref{eq:add_case1}, we obtain
		\begin{equation}
		\label{eq:alpha_1}
		\left\{ (\alpha x)^{q-1}+(\alpha x)^{q^3-1}+\theta (\alpha x)^{q^5-1} : x\in \F_{q^6}^*  \right\} = \left\{ \alpha^{q-1}\left(x^{q-1}+x^{q^3-1}+\theta x^{q^5-1}\right): x\in \F_{q^6}^*  \right\}.
		\end{equation}
		By \eqref{eq:f_1,3,5_1},  \eqref{eq:alpha_1} is equivalent to 
		$$	\left\{  x^{q-1}+x^{q^3-1}+\theta x^{q^5-1} : x\in \F_{q^6}^*  \right\} =\left\{ d\left(x^{q-1}+x^{q^3-1}+\theta^\tau x^{q^5-1}\right): x\in \F_{q^6}^*  \right\}. $$
		
		Therefore, $L_f$ is mapped to itself under $M\tau$ where
		$M=
		\begin{pmatrix}
			1 & 0   \\ 
			0& d  
		\end{pmatrix}$ and $\tau$ satisfy conditions \eqref{eq:f_1,3,5_1} and \eqref{eq:f_1,3,5_d3}.

		\medskip
		\noindent\textbf{Case 2:} $b\neq 0$.  Without loss of generality, we assume that $b=1$. 
		First we concentrate on the proof of the following claim:
		
		\medskip
		\textbf{Claim:} $a=0$.
		
		Assume, by way of contradiction, that $a\neq 0$. Now
		\[\frac{cx+d\bar{f}(x)}{ax+b\bar{f}(x)}=d+\frac{(c-ad)x}{ax+\bar{f}(x)},\]
		where $\bar{f}(x)=x^q+x^{q^3}+\theta^\tau x^{q^5}.$
		
		As $M$ is nonsingular, $c-ad\neq 0$. Furthermore, by definition of maximum scattered linear sets in $\PG(1,q^{6})$, the map $x\mapsto ax+\bar{f}(x)$ must be invertible; otherwise the point $\langle (0,1)\rangle_{\F_{q^{6}}}$ would be in $L_f$ which is impossible.
				
		Let $ h(x)=\sum_{i=0}^{5} r_i x^{q^i} $ denote the inverse map of $x\mapsto \frac{ ax+\bar{f}(x)}{c-ad}$. Setting $y=\frac{ ax+\bar{f}(x)}{c-ad}$, we get
		\[
		d+\frac{(c-ad)x}{ax+\bar{f}(x)}=d+\frac{h(y)}{y}.
		\]
		By comparing the coefficients of $ah(Y)+\bar{f}(h(Y))\equiv (c-ad)Y \pmod{Y^{q^6}-Y}$, we obtain
		\begin{equation}\label{eq:d=0_2}
		a r_i +r_{i-1}^{q}+r_{i+3}^{q^3}+\theta^\tau r_{i+1}^{q^5} = 0,
		\end{equation} 	
		for $i=1,2,3,4,5$.
		
		Now \eqref{eq:LP_main_equivalence_1,3,5} becomes 
		\begin{equation*}
		\left\{ x^{q-1}+x^{q^3-1}+\theta x^{q^5-1} : x\in \F_{q^6}^*  \right\} = \left\{ d+\sum_{i=0}^{5}r_i y^{q^i-1}: y\in \F_{q^5}^*  \right\}.
		\end{equation*}
		By \eqref{eq:k_1_n-1} for $k=1$, 	
		\begin{equation*}
		r_1r_5^q=\theta^q,
		\end{equation*}
		which means  $r_1,r_5 \neq 0$ .
		By \eqref{eq:k_1_n-1} for $k=2$, 	
		\begin{equation*}
		r_2r_4^{q^2}=0,
		\end{equation*}
		which means at least one of $r_{2}$ and $r_{4}$ equals $0$. Without loss of generality, we assume that $r_2=0$. 
		By \eqref{eq:k_2_n-1} for $k=2$, 	
		\begin{equation*}
		r_1^{q+1}r_4^{q^2}+r_2r_5^{q^2+q}=0
		\end{equation*}
		which means  $r_1^{q+1}r_4^{q^2}= 0$, then $r_4=0$.
		
		By \eqref{eq:d=0_2} for $i=1,3,5$ and the assumption that $a\neq 0$, we get
		\begin{equation}\label{eq:i_1,3,5}
		\begin{cases}
		r_1=-\frac{r_0^q}{a},\\
		r_3=-\frac{r_0^{q^3}}{a},\\
		r_5=-\frac{\theta^\tau r_0^{q^5}}{a}.
		\end{cases}
		\end{equation}
		By \eqref{eq:d=0_2} for $i=2$, we get
		\begin{equation*}
		r_1^q+r_5^{q^3}+\theta^\tau r_3^{q^5}=0,
		\end{equation*}
		and since $\theta^\tau\in\F_{q^2}$, which is equivalent to	
		\begin{equation}\label{eq:i_2}
		r_1^{q^5}+r_5^{q}+\theta^\tau r_3^{q^3}=0,
		\end{equation}
		Similarly by \eqref{eq:d=0_2} for $i=4$, we get
		\begin{equation}\label{eq:i_4}
		r_1^{q^5}+r_3^{q^3}+\theta^\tau r_5^{q}=0.
		\end{equation}
		By \eqref{eq:i_2} and \eqref{eq:i_4}, we get $\left(r_3^{q^3}-r_5^{q}\right)(\theta^\tau-1)=0$, which implies
		\begin{equation*}
	    	r_3^{q^3}=r_5^{q}.
		\end{equation*}
		Plugging \eqref{eq:i_1,3,5} into the above equation, we have $\theta^\tau=a^{q-q^3}$
		which implies
		\begin{equation}
		\label{eq:theta_1}
		\theta^{3\tau }=N_{{q^6}/{q^2}}(\theta^\tau)=N_{{q^6}/{q^2}}\left(a^{q(1-q^2)}\right)=1.
		\end{equation}
		By $\theta ^2+ \theta =1$ and $\theta\in \F_{q^2}$, we get
		$$\theta^{3\tau }=(1-\theta^\tau)\theta^\tau=\theta^\tau+\theta^\tau-1=2\theta^\tau-1,$$
		together with \eqref {eq:theta_1}, we get $\theta=1$, which is a contradiction to the assumption that $\theta^2+\theta=1$. 
		
		Therefore, we have proved the claim that $a$ must be $0$.
		\medskip
		
		As $$
		\begin{pmatrix} 
		0 & 1   \\ 
		c & d  
		\end{pmatrix}^{-1}=	
		\begin{pmatrix} 
		-d/c & 1/c   \\
		1 & 0 
		\end{pmatrix},$$
		by applying \textbf{Claim} on  the inverse map of $M$ with $a=0$, we get $d = 0$. Thus
		$$M= 
		\begin{pmatrix} 
		0 & 1   \\ 
		c & 0  
		\end{pmatrix}.$$
		
		Let $h(x)=-(\theta^{\tau q}+1)x^q+x^{q^3}+x^{q^5}.$ 
		It can be readily verified that $h(\bar{f}(x))=x$, i.e.\ $h(x)$ is the inverse map of $\bar{f}(x)$. Consequently, \eqref{eq:LP_main_equivalence_1,3,5} equals 
		\begin{equation*}
		\left\{\frac{ch(x)}{x}:x\in \mathbb{F}_{q^6}^*\right\}=\left\{ \frac{f(x)}{x}:x\in \mathbb{F}_{q^6}^*\right\},
		\end{equation*}
		By \eqref{eq:k_1_n-1} for $k=1$ in Lemma \ref{lem1}, we get \begin{equation} 
		\label{eq:f_1,3,5_c1}
		c^{q+1}=\frac{-\theta^{q}}{\left(\theta^{\tau q}+1\right)},
		\end{equation}
		and by \eqref{eq:k_1_n-1} for $k=3$ we obtain
		\begin{equation} 
		\label{eq:f_1,3,5_c2}
		c^{q^3+1}=1.
		\end{equation}
		By \eqref{eq:f_1,3,5_c1} and \eqref{eq:f_1,3,5_c2}, we get 
		\begin{equation}\label{eq:f_1,3,5_b_1}
		\theta^{2q-1}=\left(-\theta^\tau-1\right)^{2q-1}.
		\end{equation}
		As $\theta$ and $-\theta^\tau-1$ are both roots of $X^2+X-1$, by Lemma \ref{lem:f_1,3,5}, \eqref{eq:f_1,3,5_b_1} is equivalent to $-\theta=\theta^{\tau}+1$. It follows that
		\begin{equation}\label{eq:f_1,3,5_3}
		\theta^{\tau+1}= -1.
		\end{equation}
		
		By \eqref{eq:f_1,3,5_c1} and $\theta^\tau+1=-\theta$,  \begin{equation}
		\label{eq:f_1,3,5_5}
		c^{q+1}=1.
		\end{equation} 
		Then  \eqref{eq:f_1,3,5_3} and \eqref{eq:f_1,3,5_5} are the necessary conditions. 
		
		Next we show that \eqref{eq:f_1,3,5_3} and \eqref{eq:f_1,3,5_5} are sufficient. 
		Suppose \eqref{eq:f_1,3,5_3} and \eqref{eq:f_1,3,5_5} hold.
		Let $h(x)=-(\theta^{\tau q}+1)x^q+x^{q^3}+x^{q^5}.$ 
		Then $h( \bar{f}(x))=x$ which means
		\begin{equation}\label{eq:f_1,3,5_b_1_1}	\left\{ \frac{cx}{\bar{f}(x)}:x\in \mathbb{F}_{q^6}^*\right\}=\left\{\frac{ch(x)}{x}:x\in \mathbb{F}_{q^6}^*\right\}.
		\end{equation}
		By a simple calculation, the adjoint $\hat{h}(x)$ of $h(x)$ is
		$$\hat{h}(x)= x^q +x^{q^3} + \left(-\theta^{\tau q}-1\right)^{q^5}x^{q^5}= x^q +x^{q^3} + \theta x^{q^5}=f(x).$$
		By Lemma \ref{le:LS_adjoint}, $L_{h}=L_{\hat{h}}$, which means
		\begin{equation}
		\label{eq:f_1,3,5_b_1_2}\left\{\frac{ch(x)}{x}:x\in \mathbb{F}_{q^6}^*\right\}=	\left\{\frac{c\hat{h}(x)}{x}:x\in \mathbb{F}_{q^6}^*\right\}=	\left\{\frac{ cf(x)}{x}:x\in \mathbb{F}_{q^6}^*\right\}.
		\end{equation}
		
		The \textbf{Case 1} of the proof tells us that if $c$ satisfies \eqref{eq:f_1,3,5_5}, then
		\begin{equation}
		\label{eq:f_1,3,5_b_1_3}
		\left\{\frac{ cf(x)}{x}:x\in \mathbb{F}_{q^6}^*\right\}=\left\{\frac{f(x)}{x}:x\in \mathbb{F}_{q^6}^*\right\}.
		\end{equation}
		By \eqref{eq:f_1,3,5_b_1_1}, \eqref{eq:f_1,3,5_b_1_2} and \eqref{eq:f_1,3,5_b_1_3}, we get that \eqref{eq:f_1,3,5_3} and \eqref{eq:f_1,3,5_5} are  the sufficient conditions. 
		
		Now the automorphism  group of $L_f$ is
			\[\Aut(L_f)=\mathcal{D}\cup\mathcal{C},	\]
		where
		$$\mathcal{D}:=\left\{M\tau: M=	\begin{pmatrix} 
			1 & 0   \\ 
			0 & d  
		\end{pmatrix} ,\theta^\tau=\theta,d^{q+1}=1 \right\},$$
		
and
		$$\mathcal{C}:=\left\{M\tau: M=	\begin{pmatrix} 
			0 & 1   \\ 
			c & 0  
		\end{pmatrix}, \theta^{\tau+1}=-1,  c^{q+1}=1 \right\}.$$
	By Lemma \ref{lem:f_1,3,5} and $\theta^2+\theta=1$, we get that the equation $\theta^{\tau+1}=-1$ holds if and only if  one of the following collections of conditions is satisfied: 	
	\begin{itemize}
		\item
		$q\equiv 0  \pmod 5$, $\theta^\tau=\theta$
		\item
		$q\equiv \pm 2 \pmod 5$, $\theta^\tau=\theta^q$.
	\end{itemize} 
	In particular, when $q\equiv \pm 1 \pmod{5}$,  $\theta^{\tau+1}=-1$  can never be satisfied and $\mathcal{C}=\varnothing$.
	\end{proof}

	\section{Csjab\'ok-Marino-Polverino-Zanella Families}\label{sec:4}
	
	In this section, we consider the equivalence problem for the two families $\CFA$ and $\CFB$ of maximum scattered linear sets constructed by Csjab\'ok, Marino, Polverino and Zanella in \cite{csajbok_classes_2018}.
	
	Recall that $\CF_{2m}$  contains maximum linear sets in $\PG(1,q^{2m})$ of the following shape
	\[
	\left\{\langle(1,x^{q^s-1}+\delta x^{q^{s+m}-1}) \rangle_{\F_{q^{2m}}} :x\in\F_{q^{2m}}^* \right\}
	\]
	for	$m=3,4$, where $\gcd(s,m)=1$ and $\delta$, $q$ satisfy certain conditions.
	
	By Lemma \ref{lem:subeq_seteq}, the following result tells us that we only have to consider the members in $\CF_{2m}$ with $s=1$.

	\begin{lemma}\cite{csajbok_classes_2018}
		Every subspace $U_{b',s}=\{(x,x^{q^s}+b'x^{q^{s+m}}):x\in\F_{q^{2m}}\}$ with $\gcd(s,m)=1$, is $\GL(2,q^{2m})$-equivalent to a subspace $$U_{b}=\{(x,x^q+bx^{q^{1+m}}):x\in\F_{q^{2m}}\}, $$for some $b\in \F_{q^{2m}}$.
	\end{lemma}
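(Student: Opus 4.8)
The plan is to realise the reduction through two elementary $\GL(2,q^{2m})$-moves, exploiting the very restricted range of admissible exponents in the setting of this section, $m\in\{3,4\}$. Since the exponents are read modulo $2m$, I would first take $0\le s<2m$. The first move is a \emph{monomial swap}: applying the diagonal matrix $\mathrm{diag}(1,1/b')$ to $U_{b',s}$ gives $\{(x,(1/b')x^{q^s}+x^{q^{s+m}})\}=U_{1/b',\,s+m}$, so $U_{b',s}$ and $U_{1/b',\,s+m}$ are $\GL$-equivalent and I may assume $1\le s\le m$. Because $\gcd(s,m)=1$ and $m\in\{3,4\}$, this forces $s\in\{1,m-1\}$. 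If $s=1$ there is nothing to prove, since $U_{b',1}=U_{b'}$ already has the desired shape with $b=b'$. Hence the entire content is the case $s=m-1$.

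For $s=m-1$ I would write $f(x)=x^{q^{m-1}}+b'x^{q^{2m-1}}$ and apply the second move, a \emph{coordinate swap}: the anti-diagonal matrix $\left(\begin{smallmatrix}0&1\\1&0\end{smallmatrix}\right)$ sends $U_f$ to the set $\{(f(x),x):x\in\F_{q^{2m}}\}$. Reindexing the parameter by the bijection $x\mapsto x^{q}$ (which does not alter the set) and using $x^{q^{2m}}=x$, this set equals $\{(x^{q^{m}}+b'x,\;x^{q}):x\in\F_{q^{2m}}\}$. I then want to read it as a graph over the first coordinate. Put $\lambda(x)=x^{q^m}+b'x$; if $\lambda$ is invertible, then writing $x=\lambda^{-1}(u)$ the set becomes $\{(u,\lambda^{-1}(u)^{q}):u\in\F_{q^{2m}}\}$, i.e.\ a subspace $U_g$ with $g(u)=\lambda^{-1}(u)^{q}$. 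A short computation I would carry out shows $\lambda^{-1}$ is again a binomial, $\lambda^{-1}(u)=\alpha u+\beta u^{q^m}$ with $\beta=(1-N_{q^{2m}/q^m}(b'))^{-1}$ and $\alpha=-\beta (b')^{q^m}$; consequently $g(u)=\alpha^{q}u^{q}+\beta^{q}u^{q^{m+1}}$ has exactly the two exponents $q$ and $q^{1+m}$. A final scaling by $\mathrm{diag}(1,\alpha^{-q})$ normalises the leading coefficient and yields $U_b$ with $b=(\beta/\alpha)^{q}$, completing the reduction.

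The hard part is exactly the invertibility of $\lambda(x)=x^{q^m}+b'x$ together with the explicit binomial form of its inverse; the value $\beta=(1-N_{q^{2m}/q^m}(b'))^{-1}$ shows that this step, and hence the whole argument, goes through precisely when $N_{q^{2m}/q^m}(b')\neq1$. This is the nondegeneracy hypothesis under which the members of $\CFA$ and $\CFB$ live (compare the standing assumptions in Lemma \ref{lem:s_1,s_2}), so it is harmless here; the degenerate case $N_{q^{2m}/q^m}(b')=1$, where $\lambda$ is singular, would have to be excluded or treated separately. I would also stress that the two moves only connect the exponents within the orbit $\{s,\,-s,\,m+s,\,m-s\}\pmod{2m}$, so the reduction to $s=1$ succeeds only because $m\in\{3,4\}$ makes every $s$ coprime to $m$ satisfy $s\equiv\pm1\pmod m$; this is where the restriction to these two families is genuinely used.
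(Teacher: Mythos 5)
The paper offers no proof of this lemma at all --- it is quoted from \cite{csajbok_classes_2018} as a black box --- so there is no internal argument to compare yours against; I can only assess your reconstruction on its own terms, and it is essentially correct. The two moves you use are exactly the right $\GL(2,q^{2m})$-operations on these binomials: $\mathrm{diag}(1,1/b')$ realises $s\mapsto s+m$ (with $b'\mapsto 1/b'$), and the coordinate swap followed by reparametrising via the inverse of $\lambda(x)=x^{q^m}+b'x$ realises $s\mapsto m-s$. Your formula for $\lambda^{-1}$ checks out: the conditions $\beta^{q^m}+b'\alpha=1$ and $\alpha^{q^m}+b'\beta=0$ give $\beta=(1-N_{q^{2m}/q^m}(b'))^{-1}\in\F_{q^m}$ and $\alpha=-\beta\, (b')^{q^m}$, so $g(u)=\alpha^q u^q+\beta^q u^{q^{m+1}}$ and the final scaling lands on $U_b$ with $b=-(b')^{-q^{m+1}}$; one can check $N_{q^{2m}/q^m}(b)=N_{q^{2m}/q^m}(b')^{-q}$, which is consistent with the equivalence criterion in Lemma \ref{lem:s_1,s_2} (taking $\sigma$ there to be the inverse Frobenius of $\F_{q^m}$). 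You are also right to stress that the whole reduction to $s=1$ rests on $m\in\{3,4\}$ forcing $s\equiv\pm1\pmod m$; for general $m$ the analogous statement would contradict Lemma \ref{lem:s_1,s_2}.

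Two caveats on hypotheses. You correctly flag that invertibility of $\lambda$ requires $N_{q^{2m}/q^m}(b')\neq1$, but there is a second degenerate value you do not flag: $b'=0$. Both your first move and your final normalisation divide by (a power of) $b'$, since $\alpha=-\beta\,(b')^{q^m}$ vanishes when $b'=0$; and indeed for $b'=0$, $s=m-1$ the swapped subspace is $\{(u,u^{q^{m+1}})\}$, which is not of the form $U_b$ for any $b$, so the lemma as literally quoted (with no restriction on $b'$) is not established --- and is doubtful --- in that case. Neither degeneracy affects the paper, because the lemma is only invoked for members of $\CFA$ and $\CFB$, where $N_{q^{2m}/q^m}(\delta)\notin\{0,1\}$ is a standing assumption, but a careful write-up should state these hypotheses explicitly rather than discover them mid-proof.
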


	To handle the equivalence problem for $\CFA$ and $\CFB$, we need to use Lemma \ref{lem1}. However, it is not enough to provide us a complete answer, and we need the following several extra lemmas.
	\begin{lemma}\label{D_6}
		Let $f(x)=a_1x^{q}+a_4x^{q^4}$ and   $g(x)=b_1x^{q}+b_4 x^{q^4}$  be two q-polynomials over $\mathbb{F}_{q^6}$. If $L_f=L_g$, then 
		\begin{equation}	\label{eq:(2,0)_1,1}
		a_4^{q^4+q^2+1}=b_4^{q^4+q^2+1}.
		\end{equation}
	\end{lemma}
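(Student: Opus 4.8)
The plan is to extract the claimed identity $a_4^{q^4+q^2+1}=b_4^{q^4+q^2+1}$ directly from the power-sum conditions of Lemma~\ref{lem1}, specialized to the binomial shape $f=a_1X^q+a_4X^{q^4}$ and $g=b_1X^q+b_4X^{q^4}$. Here the nonzero coefficients are precisely $a_1,a_4$ (with $a_2=a_3=a_5=a_0=0$), and similarly for $g$. I would first write out which of the relations \eqref{eq:a_0_b_0}, \eqref{eq:k_1_n-1}, \eqref{eq:k_2_n-1} survive after setting the other coefficients to zero, since most terms vanish. In particular \eqref{eq:a_0_b_0} gives $a_0=b_0=0$ trivially, and \eqref{eq:k_1_n-1} for the various $k$ reduces to a small number of nontrivial constraints because $a_ka_{n-k}^{q^k}$ is nonzero only when both indices land in $\{1,4\}$.

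Concretely, with $n=6$, the products $a_k a_{6-k}^{q^k}$ pair index $k$ with $6-k$; the pairs are $(1,5),(2,4),(3,3),(4,2),(5,1)$. Since only $a_1,a_4$ are nonzero, the only surviving instance of \eqref{eq:k_1_n-1} is $k=2$, giving $a_2 a_4^{q^2}=b_2b_4^{q^2}$, which is $0=0$ and useless, so the first-order relations alone carry no information here. The substance must therefore come from \eqref{eq:k_2_n-1}. I would evaluate \eqref{eq:k_2_n-1} for each $k=2,\dots,5$ and record which of the two summands $a_1a_{k-1}^q a_{6-k}^{q^k}$ and $a_k a_{5}^q a_{7-k}^{q^k}$ survive. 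Because $a_5=0$, the second summand $a_k a_{n-1}^q a_{n-k+1}^{q^k}$ vanishes identically for every $k$, so \eqref{eq:k_2_n-1} collapses to $a_1 a_{k-1}^q a_{6-k}^{q^k}=b_1 b_{k-1}^q b_{6-k}^{q^k}$. For this to be nonzero we need $k-1\in\{1,4\}$ and $6-k\in\{1,4\}$ simultaneously; scanning $k=2,\dots,5$ shows the useful case is the one where the three indices $1,k-1,6-k$ all lie in $\{1,4\}$, and I expect $k=5$ (indices $1,4,1$) to give a genuine relation involving $a_4$.

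From the surviving relation I anticipate obtaining something of the form $a_1 a_4^q a_1^{q^5}=b_1 b_4^q b_1^{q^5}$, i.e.\ $N(a_1)\,a_4^q$-type expression, and analogous relations for other admissible $k$; combined with any nontrivial instance of \eqref{eq:k_1_n-1} these should let me eliminate the $a_1,b_1$ factors and isolate a pure condition on $a_4,b_4$. The target norm-type exponent $q^4+q^2+1$ is exactly $N_{q^6/q^2}$, the relative norm down to $\F_{q^2}$, so I expect the elimination to produce $a_4^{q^4+q^2+1}=b_4^{q^4+q^2+1}$ after taking an appropriate product of the relations over the Frobenius orbit and cancelling the $a_1$-factors (whose own norm contributions match on both sides via the surviving first-order relation). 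The main obstacle will be the bookkeeping of which index triples survive and then arranging the cancellation of the $a_1,b_1$ (and their conjugates) cleanly, rather than any deep idea: one must verify that the $a_1$-dependence on the two sides is genuinely identical so that it divides out, leaving precisely the relative norm of $a_4$ versus $b_4$.
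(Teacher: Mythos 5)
Your reduction of Lemma \ref{lem1} to the binomial case is carried out correctly, but the elimination you hope for at the end does not go through, and this is a genuine gap. As you yourself observe, every instance of \eqref{eq:k_1_n-1} degenerates to $0=0$ here, so there is no ``surviving first-order relation'' available to control the $a_1$-factors. What \eqref{eq:k_2_n-1} actually yields is the relation for $k=2$, namely $a_1^{1+q}a_4^{q^2}=b_1^{1+q}b_4^{q^2}$, and the relation for $k=5$, namely $a_1^{1+q^5}a_4^{q}=b_1^{1+q^5}b_4^{q}$; raising the latter to the power $q$ recovers the former, so Lemma \ref{lem1} contributes exactly one constraint up to Frobenius. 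Setting $u=a_1/b_1$ and $v=a_4/b_4$ (when these are nonzero), that single constraint reads $v^{q^2}=u^{-(1+q)}$, i.e.\ $v=u^{-(q^4+q^5)}$, and hence
\begin{equation*}
v^{1+q^2+q^4}=u^{-(q^4+q^5)(1+q^2+q^4)}=u^{-(1+q+q^2+q^3+q^4+q^5)}=N_{q^6/q}(u)^{-1}.
\end{equation*}
So the claimed identity \eqref{eq:(2,0)_1,1} is equivalent to $N_{q^6/q}(a_1)=N_{q^6/q}(b_1)$, and nothing in Lemma \ref{lem1} forces this: the $a_1$-dependence on the two sides is \emph{not} identical and does not divide out. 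The missing ingredient is genuinely new information beyond the low-degree power sums packaged in Lemma \ref{lem1}.

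The paper closes this gap by going back to Lemma \ref{lem3} with the higher exponent $d=D=q^4+q^2+1$. Expanding $\left(f(x)/x\right)^D=\left(f(x)/x\right)\left(f(x)/x\right)^{q^2}\left(f(x)/x\right)^{q^4}$ and applying Lemma \ref{lem2}, one checks that the congruence $q^{u_1}+q^{u_2+2}+q^{u_3+4}\equiv 1+q^2+q^4 \pmod{q^6-1}$ with $u_1,u_2,u_3\in\{1,4\}$ admits only the solution $u_1=u_2=u_3=4$, so the power sum collapses to $-a_4^{1+q^2+q^4}$ and the identity follows directly, with no elimination needed and no interference from $a_1$. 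If you want to salvage your plan, you must add such a higher-degree power-sum computation; the relations \eqref{eq:a_0_b_0}--\eqref{eq:k_2_n-1} alone are provably insufficient for this lemma.
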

	\begin{proof}
		Let $D=(q^2-q+1)(q^2+q+1)=q^4+q^2+1$. Then 
		$$\sum_{x\in\mathbb{F}_{q^6}^*}\left(\frac{f(x)}{x}\right)^D=\sum_{u_1,u_2,u_3\in\{1,4\}}a_{u_1}a_{u_2}^{q^2}a_{u_3}^{q^4} \sum_{x\in\mathbb{F}_{q^6}^*}x^U,$$
		with $U={q^{u_1}+ q^{u_2+2}+q^{u_3+4}-(1+q^2+q^4) }$. 
		By Lemma \ref{lem3}, we obtain $\sum_{x\in\mathbb{F}_{q^6}^*}x^k=-1$ if and only if $q^6-1|k$. When $\sum_{x\in\mathbb{F}_{q^6}^*}x^U=-1$ ,
		\begin{equation*}
			q^{u_1}+ q^{u_2+2}+q^{u_3+4}\equiv 1+q^2+q^4 \pmod {q^6-1},
		\end{equation*}
		with $u_1,u_2,u_3\in\{1,4\}.$ A simple calculation shows that $u_1=u_2=u_3=4$.
		
		By $L_f=L_g$  and Lemma \ref{lem2}, we get
		\begin{equation*}
			\sum_{x\in\mathbb{F}_{q^6}^*}\left(\frac{f(x)}{x}\right)^D=\sum_{x\in\mathbb{F}_{q^6}^*}\left(\frac{g(x)}{x}\right)^D.
		\end{equation*}
		which means,  
		\[
			a_4a_4^{q^2}a_4^{q^4}=b_4b_4^{q^2}b_4^{q^4}. \qedhere
		\]
	\end{proof}

	\begin{lemma}\label{D_8}
		Let $f(x)=a_1x^{q}+a_5x^{q^5}$ and   $g(x)=b_1x^{q}+b_5 x^{q^5}$  be two q-polynomials over $\mathbb{F}_{q^8}$. If $L_f=L_g$, then we have 
		\begin{equation}	\label{eq:(3,2,1,0)_1,1}
			a_1^{q^2+q+1}a_5^{q^3}=b_1^{q^2+q+1}b_5^{q^3},
		\end{equation}
		and
	\begin{equation}	\label{eq:(3,2,0)(3,2,1,0)_1,1}
		a_1a_5^{q^6+q^3+q}=b_1b_5^{q^6+q^3+q}.
	\end{equation}
	\end{lemma}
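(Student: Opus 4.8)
The plan is to mirror the proof of Lemma \ref{D_6}: invoke Lemma \ref{lem3}, which turns the hypothesis $L_f=L_g$ into the family of power-sum identities $\sum_{x\in\F_{q^8}^*}(f(x)/x)^d=\sum_{x\in\F_{q^8}^*}(g(x)/x)^d$ valid for every non-negative integer $d$, and then choose $d$ so that the power-sum isolates precisely the monomial in the coefficients that appears in each claimed identity. Writing $f(x)/x=a_1x^{q-1}+a_5x^{q^5-1}$ and using that $(A+B)^{q^e}=A^{q^e}+B^{q^e}$ in characteristic $p$, for any set $E\subseteq\{0,1,\dots,7\}$ with $d=\sum_{e\in E}q^e$ one obtains the clean factorisation
\[
\left(\frac{f(x)}{x}\right)^{d}=\prod_{e\in E}\left(a_1^{q^e}x^{(q-1)q^e}+a_5^{q^e}x^{(q^5-1)q^e}\right),
\]
which expands into $2^{|E|}$ monomials, each of coefficient $1$, of the form $\big(\prod_{e\in E}a_{v_e}^{q^e}\big)x^{U}$ with $v_e\in\{1,5\}$ and $U=\sum_{e\in E}q^{v_e+e}-\sum_{e\in E}q^e$. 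By Lemma \ref{lem2}, after summing over $x\in\F_{q^8}^*$ only the monomials with $q^8-1\mid U$ survive, each contributing $-1$; equivalently, I must solve $\sum_{e\in E}q^{v_e+e}\equiv\sum_{e\in E}q^e\pmod{q^8-1}$.

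For the first identity \eqref{eq:(3,2,1,0)_1,1} I would take $E=\{0,1,2,3\}$, i.e.\ $d=1+q+q^2+q^3$, so the target exponent is $q^0+q^1+q^2+q^3$. Reducing each admissible exponent $v_e+e$ modulo $8$ gives the available digit positions $\{1,5\},\{2,6\},\{3,7\},\{4,0\}$ for $e=0,1,2,3$ respectively; matching the target digit multiset $\{0,1,2,3\}$ then forces $v_0=v_1=v_2=1$ and $v_3=5$ uniquely. Hence a single monomial survives, namely $a_1\cdot a_1^{q}\cdot a_1^{q^2}\cdot a_5^{q^3}=a_1^{q^2+q+1}a_5^{q^3}$, and equating the power-sums for $f$ and $g$ yields \eqref{eq:(3,2,1,0)_1,1}.

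For the second identity \eqref{eq:(3,2,0)(3,2,1,0)_1,1} I would instead take $E=\{0,1,3,6\}$, i.e.\ $d=1+q+q^3+q^6$, with target exponent $q^0+q^1+q^3+q^6$. The same reduction modulo $8$ gives the position choices $\{1,5\},\{2,6\},\{4,0\},\{7,3\}$ for $e=0,1,3,6$; matching the digit multiset $\{0,1,3,6\}$ forces $v_0=1$ and $v_1=v_3=v_6=5$ uniquely, so the surviving monomial is $a_1\cdot a_5^{q}\cdot a_5^{q^3}\cdot a_5^{q^6}=a_1a_5^{q^6+q^3+q}$, giving \eqref{eq:(3,2,0)(3,2,1,0)_1,1}.

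The only genuinely delicate point, and the step I expect to be the main (though elementary) obstacle, is the uniqueness of the surviving monomial, i.e.\ that the congruence $\sum_{e\in E}q^{v_e+e}\equiv\sum_{e\in E}q^e\pmod{q^8-1}$ admits no solution beyond the digit-matching one found above. Comparing base-$q$ representations is legitimate only when no carrying occurs: here each side is a sum of $|E|=4$ powers of $q$ with reduced exponents in $\{0,\dots,7\}$, so every base-$q$ digit is at most $4<q$ in the relevant range $q\ge5$, whence $\sum_{e\in E}q^{v_e+e}<q^8$ and both sides are determined by their digit multisets. The remainder is the routine bookkeeping of the admissible positions modulo $8$ together with the applications of Lemmas \ref{lem3} and \ref{lem2}.
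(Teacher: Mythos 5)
Your proposal is correct and follows essentially the same route as the paper's proof: the same two exponents $d=1+q+q^2+q^3$ and $d=q^6+q^3+q+1=(q^3-q^2+1)(q^3+q^2+q+1)$, the same expansion of $(f(x)/x)^d$ via Frobenius-linearity combined with Lemmas \ref{lem3} and \ref{lem2}, and the same digit-matching argument to show that exactly one monomial survives in each case. The only quibble is that your no-carrying justification needlessly restricts to $q\ge 5$: since the residue-class pairs $\{1,5\},\{2,6\},\{3,7\},\{4,0\}$ (resp.\ $\{1,5\},\{2,6\},\{4,0\},\{7,3\}$) are pairwise disjoint, every candidate exponent is a sum of four \emph{distinct} powers $q^j$ with $j\le 7$, hence at most $q^7+q^6+q^5+q^4<q^8-1$, so the comparison of base-$q$ digit sets is valid for every prime power $q$.
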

	\begin{proof}
		Let $D_1=q^3+q^2+q+1$, we get 
		$$\sum_{x\in\mathbb{F}_{q^8}^*}\left(\frac{f(x)}{x}\right)^{D_1}=\sum_{u_1,u_2,u_3,u_4\in\{1,5\}}a_{u_1}a_{u_2}^qa_{u_3}^{q^2}a_{u_4}^{q^3} \sum_{x\in\mathbb{F}_{q^8}^*}x^U,$$ with $U=q^{u_1}+ q^{u_2+1}+q^{u_3+2}+q^{u_4+3} -(1+q+q^2+q^3).$ 
		By Lemma \ref{lem3}, we obtain $\sum_{x\in\mathbb{F}_{q^8}^*}x^k=-1$ if and only if $(q^8-1)|k$. When $\sum_{x\in\mathbb{F}_{q^8}^*}x^U=-1$, 	
		\begin{equation*}
			q^{u_1}+ q^{u_2+1}+q^{u_3+2}+q^{u_4+3} \equiv 1+q+q^2+q^3 \pmod {q^8-1},
		\end{equation*}
		with $u_1,u_2,u_3,u_4\in\{1,5\}$. A direct computation shows that $u_1=u_2=u_3=1,u_4=5$.

	Let $D_2=(q^3-q^2+1)(q^3+q^2+q+1)=q^6+q^3+q+1$, we get 
	$$\sum_{x\in\mathbb{F}_{q^8}^*}\left(\frac{f(x)}{x}\right)^{D_2}=\sum_{i_1,i_2,i_3,i_4\in\{1,5\}}a_{i_1}a_{i_2}^qa_{i_3}^{q^3}a_{i_4}^{q^6} \sum_{x\in\mathbb{F}_{q^8}^*}x^I,$$ with $I=q^{i_1}+ q^{i_2+1}+q^{i_3+3}+q^{i_4+6} -(1+q+q^3+q^6)$.
   When $\sum_{x\in\mathbb{F}_{q^8}^*}x^I=-1$, 	
   \begin{equation*}
		q^{i_1}+ q^{i_2+1}+q^{i_3+3}+q^{i_4+6} \equiv 1+q+q^3+q^6 \pmod {q^8-1},
	\end{equation*}
	with $i_1,i_2,i_3,i_4\in\{1,5\}$. Hence we must have $i_1=1,i_2=i_3=i_4=5$.
	
	By $L_f=L_g$  and Lemma \ref{lem2}, we get
	\begin{equation*}
		\sum_{x\in\mathbb{F}_{q^6}^*}\left(\frac{f(x)}{x}\right)^{D_k}=\sum_{x\in\mathbb{F}_{q^6}^*}\left(\frac{g(x)}{x}\right)^{D_k},
	\end{equation*}
	with $k=1,2$,
	which means
			$$a_1a_1^qa_1^{q^2}a_5^{q^3}=b_1b_1^qb_1^{q^2}b_5^{q^3},$$
	and
	\begin{equation*}
		a_1a_5^{q}a_5^{q^3}a_5^{q^6}=b_1b_5^{q}b_5^{q^3}b_5^{q^6}.\qedhere
	\end{equation*}	
\end{proof}

	\begin{lemma}\label{d=0}
		Let $f(x)=x^{q^{s_1}}+\delta x^{q^{s_1+m}}$, $g(x)=x^{q^{s_2}}+\theta x^{q^{s_2+m}}$ with $\delta, \theta \in \F_{q^{2m}}$, $1\leq s_1,s_2<m$ and $\gcd(m,s_1)=\gcd(m,s_2)=1$. 
		Let $M=
		\begin{pmatrix}
			a & 1   \\ 
			c& d  
		\end{pmatrix}$ be an invertible matrix  over $\F_{q^{2m}}$. If 
		\begin{equation}\label{eq:d=0}
			\left\{ x^{q^{s_1}-1}+\delta x^{q^{s_1+m}-1} : x\in \F_{q^{2m}}^*  \right\} = \left\{ \frac{cx+dg(x)}{ax+g(x)}: x\in \F_{q^{2m}}^*  \right\}, 
		\end{equation}
		then $a=d=0$.
	\end{lemma}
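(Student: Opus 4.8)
The plan is to analyze the functional equation \eqref{eq:d=0} by expanding the right-hand side as a composition and then extracting constraints on the matrix entries $a,c,d$ via the coefficient comparisons available from Lemma~\ref{lem1}, together with the norm-type identities of Lemmas~\ref{D_6} and \ref{D_8}. Following the template already used in the proof of the automorphism theorem in Section~\ref{sec:3} (the \textbf{Claim} that $a=0$), I would first suppose for contradiction that $a\neq 0$ and rewrite the right-hand fraction as
\[
\frac{cx+dg(x)}{ax+g(x)}=d+\frac{(c-ad)x}{ax+g(x)}.
\]
Since $M$ is invertible, $c-ad\neq0$; and since $L_f$ is a maximum scattered linear set not containing $\langle(0,1)\rangle_{\F_{q^{2m}}}$, the map $x\mapsto ax+g(x)$ must be invertible as an $\F_q$-linear map over $\F_{q^{2m}}$. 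This lets me introduce the inverse $q$-polynomial $h(x)=\sum_{i=0}^{2m-1}r_i x^{q^i}$ of $x\mapsto (ax+g(x))/(c-ad)$ and rewrite the right-hand set of \eqref{eq:d=0} as $\{\,d+h(y)/y : y\in\F_{q^{2m}}^*\,\}$, which is of the form $L_{h}$ shifted by the constant $d$.

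**Extracting the coefficient relations.**

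Next I would compare coefficients in the defining identity $a\,h(Y)+g(h(Y))\equiv (c-ad)Y \pmod{Y^{q^{2m}}-Y}$. Because $g(x)=x^{q^{s_2}}+\theta x^{q^{s_2+m}}$ has only two nonzero terms, this produces, for each index $i$, a relation of the shape
\[
a\,r_i + r_{i-s_2}^{q^{s_2}} + \theta\, r_{i+m-s_2}^{q^{s_2+m}} = 0,
\]
exactly parallel to \eqref{eq:d=0_2}. Simultaneously, the equality of the two image sets in \eqref{eq:d=0} forces $L_f$ and $L_{h}$ to coincide after absorbing the constant $d$, so Lemma~\ref{lem1} applies to $h$: the relations \eqref{eq:a_0_b_0}, \eqref{eq:k_1_n-1} and \eqref{eq:k_2_n-1} together with the specialized identities of Lemma~\ref{D_6} (for $m=3$) or Lemma~\ref{D_8} (for $m=4$) pin down which $r_i$ vanish and give norm constraints on the surviving coefficients. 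I expect the pattern from Case~2 of the Section~\ref{sec:3} proof to recur: most of the $r_i$ are forced to be zero, a small number (indexed by the arithmetic progression determined by $s_2$ and $m$) survive, and the nonvanishing ones satisfy a relation of the form $r_\alpha^{q^\beta}=r_\gamma^{q^\delta}$.

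**Deriving the contradiction.**

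Substituting the surviving $r_i$ back into the linear recursion above and combining with the norm identities should yield a forced value for $\theta$ (or for a power of $\theta$) of the shape $\theta^{\,e}=N_{q^{2m}/q^m}(\cdot)=1$ for some exponent $e$, contradicting the standing hypothesis $N_{q^{2m}/q^m}(\theta)\neq1$ that makes $g$ scattered. This is the step parallel to \eqref{eq:theta_1} in the earlier proof. Once $a=0$ is established, the statement $d=0$ follows by the same symmetry argument used at the end of the Section~\ref{sec:3} proof: the inverse matrix
\[
\begin{pmatrix} a & 1 \\ c & d \end{pmatrix}^{-1}
=\frac{1}{ad-c}\begin{pmatrix} d & -1 \\ -c & a \end{pmatrix}
\]
realizes the inverse equivalence, interchanging the roles of $a$ and $d$, so applying the already-proven claim ``the top-left entry vanishes'' to this inverse yields $d=0$.

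**Main obstacle.**

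The hard part will be the bookkeeping of which coefficients $r_i$ survive and the precise norm relation they satisfy, since unlike the concrete trinomial of Section~\ref{sec:3} the polynomial $g$ here carries a free parameter $s_2$ with only $\gcd(s_2,m)=1$ fixed, so the vanishing pattern of the $r_i$ depends on $s_2$ through the index arithmetic modulo $2m$. I would handle this by treating the two cases $m=3$ and $m=4$ separately, invoking the tailored identities \eqref{eq:(2,0)_1,1} for the former and \eqref{eq:(3,2,1,0)_1,1}--\eqref{eq:(3,2,0)(3,2,1,0)_1,1} for the latter, and in each case reducing to the single forced norm equation that contradicts $N_{q^{2m}/q^m}(\theta)\neq1$. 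The subtlety is ensuring that the surviving-coefficient analysis is uniform enough across the admissible values of $s_2$ that a single contradiction suffices in each family.
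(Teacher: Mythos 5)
Your setup — the decomposition $\frac{cx+dg(x)}{ax+g(x)}=d+\frac{(c-ad)x}{ax+g(x)}$, the invertibility of $x\mapsto ax+g(x)$, the inverse $q$-polynomial $h(x)=\sum_{i=0}^{2m-1}r_ix^{q^i}$ with its coefficient recursion $ar_i+r_{i-s_2}^{q^{s_2}}+\theta r_{i-s_2+m}^{q^{s_2+m}}=0$, and the inverse-matrix symmetry used to transfer the conclusion from one diagonal entry to the other — all match the paper. The gap is in the core of your argument: you propose to assume $a\neq 0$, track the surviving $r_i$ via Lemma \ref{lem1} together with Lemmas \ref{D_6} and \ref{D_8}, and derive $N_{q^{2m}/q^m}(\theta)=1$, contradicting ``the standing hypothesis $N_{q^{2m}/q^m}(\theta)\neq 1$.'' But Lemma \ref{d=0} carries no such hypothesis: $\theta$ and $\delta$ are arbitrary elements of $\F_{q^{2m}}$ (possibly zero), $f$ and $g$ are not assumed scattered, and $m$ is not restricted to $\{3,4\}$, so neither the norm condition nor the $n=6$/$n=8$ power-sum identities \eqref{eq:(2,0)_1,1}, \eqref{eq:(3,2,1,0)_1,1}, \eqref{eq:(3,2,0)(3,2,1,0)_1,1} is available. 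Indeed, in the proof of Theorem \ref{th:LP_n_6} the lemma is invoked \emph{first}, and only afterwards is $N_{q^6/q^3}(\theta)\neq 1$ deduced as a consequence; a proof of the lemma therefore cannot presuppose it. Your planned contradiction step is exactly the one that would fail, e.g.\ already for $\theta=0$.

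The paper's actual argument needs no contradiction, no norm identities, and no case split on $m$ or $s_2$; it also establishes $d=0$ first rather than $a=0$. From \eqref{eq:a_0_b_0} one gets $r_0=-d$; from \eqref{eq:k_1_n-1} with $k=m$ one gets $r_m^{q^m+1}=0$, hence $r_m=0$; from \eqref{eq:k_1_n-1} with $k=s_2$ one of $r_{s_2},r_{2m-s_2}$ vanishes, and taking $r_{s_2}=0$ the recursion at the single index $i=s_2$ collapses to $r_0^{q^{s_2}}+\theta r_m^{q^{s_2+m}}=0$, forcing $r_0=0$ and hence $d=0$. Only then is the symmetry used: since $\left(\begin{smallmatrix}a&1\\c&0\end{smallmatrix}\right)^{-1}$ is proportional to $\left(\begin{smallmatrix}0&1\\c&-a\end{smallmatrix}\right)$, applying the same conclusion to the inverse equivalence yields $a=0$. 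To repair your proposal, drop the reductio on $a$ and the appeal to Lemmas \ref{D_6}--\ref{D_8}, and aim the coefficient bookkeeping directly at the single coefficient $r_0$ as above.
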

	\begin{proof}
		Now
		\[\frac{cx+dg(x)}{ax+bg(x)}=d+\frac{(c-ad)x}{ax+g(x)}.\]
		As $M$ is nonsingular, $c-ad\neq 0$. Furthermore, 
		the map $x\mapsto ax+g(x)$ must be invertible; otherwise the point $\langle (0,1)\rangle_{\F_{q^{2m}}}$ would be in $L_f$ which is impossible.
		
		Let $ h(x)=\sum_{i=0}^{2m-1} r_i x^{q^i} $ denote the inverse map of $x\mapsto \frac{ ax+g(x)}{c-ad}$. Setting $y=\frac{ ax+g(x)}{c-ad}$, we get
		\[
		d+\frac{(c-ad)x}{ax+g(x)}=d+\frac{h(y)}{y}.
		\]
		By comparing the coefficients of $ah(Y)+g(h(Y))\equiv (c-ad)Y \pmod{Y^{q^{2m}}-Y}$, we obtain
		\begin{equation}\label{eq:d=0_2m_2}
			a r_i +r_{i-s_2}^{q^{s_2}}+\theta r_{i-s_2+m}^{q^{s_2+m}} = 0,
		\end{equation} 	
		for $i=1,2,\cdots,2m-1$.
		Now \eqref{eq:d=0} becomes
		\begin{equation*}
			\left\{ x^{q^{s_1}-1}+\delta x^{q^{s_1+m}-1} : x\in \F_{q^{2m}}^*  \right\} = \left\{ d+\sum_{i=0}^{2m-1}r_i y^{q^i-1}: y\in \F_{q^{2m}}^*  \right\}.
		\end{equation*}
		We apply Lemma \ref{lem1} on the above equation. By \eqref{eq:a_0_b_0},
		\begin{equation}\label{eq:d=0_3}
			r_0=-d.
		\end{equation}
		Moreover, by \eqref{eq:k_1_n-1},
		\begin{equation}\label{eq:d=0_4}
			r_k r_{2m-k}^{q^k}=0,
		\end{equation}
		for $k=1,2,\cdots,2m-1$.
		By \eqref{eq:d=0_4} with $k=s_2$, 
		\[ r_{s_2} r_{2m-s_2}^{q^{s_2}}=0,\]
		which means at least one of $r_{s_2}$ and $r_{2m-s_2}$ equals $0$. Without loss of generality, we assume that $r_{s_2}=0$. 
		By \eqref{eq:d=0_4} with $k=m$,  $r_m^{q^m+1}=0$, which means 
		\begin{equation}\label{eq:d=0_5}
			r_m=0.
		\end{equation}
		By \eqref{eq:d=0_2m_2} with $i=s_2$, that is 
		\begin{equation}\label{eq:d=0_6}
			r_0^{q^{s_2}}+\theta r_m^{q^{s_2+m}}=0,
		\end{equation}
		By \eqref{eq:d=0_3}, \eqref{eq:d=0_5} and \eqref{eq:d=0_6}, we get $d=-r_0=0$.
		
		As $$\begin{pmatrix}a & 1   \\ c & 0 \end{pmatrix}^{-1}=\begin{pmatrix}0 & 1/c   \\ 1 & -a/c \end{pmatrix},$$
		by considering the inverse of $M$ acting on $L_g$, we get $a = 0$.
	\end{proof}

Now we are ready to consider the equivalence problem for $\CFA$. The following theorem completely determine the equivalence between different members in $\CFA$. 
\begin{theorem}\label{th:LP_n_6}
	Let $f(x)=x^{q}+\delta x^{q^4}$ and $g(x)=x^{q}+\theta x^{q^4}$. 
	Then  $L_f$ and $L_g$ are 
	$\PGamL$-equivalent if and only if  
	\begin{equation}\label{s_1,s_2_6}
		N_{q^6/{q^3}}(\delta ) = N_{q^6/{q^3}}(\theta )^\sigma,
	\end{equation}
	for some automorphism $\sigma\in \mathrm{Aut}(\F_{q^{3}})$.
\end{theorem}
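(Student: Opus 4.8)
The plan is to prove the two implications separately, obtaining the ``if'' direction immediately from the subspace classification and concentrating all the work on the converse. For the ``if'' direction, assume $N_{q^6/{q^3}}(\delta)=N_{q^6/{q^3}}(\theta)^{\sigma}$ for some $\sigma\in\Aut(\F_{q^3})$. Since here both polynomials correspond to $s_1=s_2=1$ and the alternative branch $s_1+s_2=m=3$ of Lemma \ref{lem:s_1,s_2} cannot occur, that lemma gives that $U_f$ and $U_g$ are $\GamL$-equivalent, and Lemma \ref{lem:subeq_seteq} then upgrades this to the $\PGamL$-equivalence of $L_f$ and $L_g$. So the whole content lies in the converse.

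For the converse I would assume $L_f^{M\tau}=L_g$ with $M=\left(\begin{smallmatrix}a&b\\ c&d\end{smallmatrix}\right)\in\PGL(2,q^6)$ and $\tau\in\Aut(\F_{q^6})$, and rewrite it, as in \eqref{eq:LP_main_equivalence_1,3,5}, in the image-set form
\[
\left\{\frac{g(x)}{x}:x\in\F_{q^6}^*\right\}=\left\{\frac{cx+d\bar f(x)}{ax+b\bar f(x)}:x\in\F_{q^6}^*\right\},\qquad \bar f(x)=x^q+\delta^{\tau}x^{q^4},
\]
splitting the analysis according to whether $b=0$. In the case $b=0$ I normalize $a=1$; then \eqref{eq:a_0_b_0} forces $c=0$, so the identity collapses to $L_g=L_{d\bar f}$ with $d\bar f(x)=dx^q+d\delta^{\tau}x^{q^4}$. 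Two relations then suffice: Lemma \ref{D_6} applied to $g$ and $d\bar f$ gives $(d\delta^{\tau})^{q^4+q^2+1}=\theta^{q^4+q^2+1}$, while \eqref{eq:k_2_n-1} with $k=5$ gives $d^{1+q+q^5}(\delta^{\tau})^q=\theta^q$, i.e. $\theta=d^{1+q^4+q^5}\delta^{\tau}$. Substituting the second into the first cancels $\delta^{\tau}$ and leaves $d^{(1+q^2+q^4)(q^4+q^5)}=1$; writing $(1+q^2+q^4)(q^4+q^5)=q^4(1+q+\cdots+q^5)$ and using that $t\mapsto t^{q^4}$ is a bijection of $\F_{q^6}^*$, this is exactly $N_{q^6/q}(d)=1$. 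Feeding this back into $\theta=d^{1+q^4+q^5}\delta^{\tau}$ and applying $N_{q^6/{q^3}}$, the exponent $(1+q^4+q^5)(1+q^3)$ reduces modulo $q^6-1$ to $1+q+\cdots+q^5$, so the $d$-contribution is $N_{q^6/q}(d)=1$ and I obtain $N_{q^6/{q^3}}(\theta)=(\delta^{\tau})^{1+q^3}=N_{q^6/{q^3}}(\delta)^{\tau}$; taking $\sigma=(\tau|_{\F_{q^3}})^{-1}$ yields the asserted identity.

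When $b\neq0$, Lemma \ref{d=0} (which uses that $f,g$ are scattered, so that $x\mapsto ax+\bar f(x)$ is invertible) forces $a=d=0$, reducing $M$ to $\left(\begin{smallmatrix}0&1\\ c&0\end{smallmatrix}\right)$ and the equivalence to $L_g=L_{ch}$ with $h=\bar f^{-1}=\sum_i r_ix^{q^i}$. Applying Lemma \ref{lem1} to the pair $(g,ch)$, \eqref{eq:a_0_b_0} gives $r_0=0$ and, since the two nonzero exponents $1,4$ of $g$ never sum to $6$, every product in \eqref{eq:k_1_n-1} vanishes, giving $r_3=0$ and $r_kr_{6-k}=0$; hence $h$ is forced to be at most a binomial. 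Imposing in addition the inversion identity $\bar f(h(x))=x$, which reads $r_{j-1}^q+\delta^{\tau}r_{j-4}^{q^4}=\varepsilon_j$ with $\varepsilon_j=1$ for $j\equiv0\pmod 6$ and $\varepsilon_j=0$ otherwise, determines the surviving coefficients in terms of $\delta^{\tau}$, after which feeding these into \eqref{eq:k_2_n-1} together with the norm relation of Lemma \ref{D_6} returns the identical condition $N_{q^6/{q^3}}(\delta)=N_{q^6/{q^3}}(\theta)^{\sigma}$.

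I expect the case $b\neq0$ to be the main obstacle. Unlike the diagonal case, the anti-diagonal map inverts $\bar f$, and the inverse of a binomial is not a binomial, so Lemma \ref{D_6} is not available directly and one must instead control the coefficients of $\bar f^{-1}$ by combining Lemma \ref{lem1} with the inversion identity. The delicate point is to verify that the vanishing forced by \eqref{eq:k_1_n-1} is consistent with $\bar f(h)=\mathrm{id}$ exactly when the norm condition holds, so that this branch produces no equivalences beyond those already coming from the diagonal case; this is in line with Lemma \ref{lem:s_1,s_2}, whose only relevant branch for $s_1=s_2=1$ is the diagonal one.
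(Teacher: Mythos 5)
Your overall architecture (case split on $b$, Lemma \ref{d=0} to kill the off-diagonal entries, Lemma \ref{lem1} plus Lemma \ref{D_6} to extract the norm relation) matches the paper's, and your $b=0$ computation with $k=5$ in \eqref{eq:k_2_n-1} is a correct variant of the paper's $k=2$ argument. But there are two genuine gaps. The main one is in the branch $b\neq 0$. You correctly deduce from \eqref{eq:a_0_b_0}, \eqref{eq:k_1_n-1} and the inversion identity that $h=\bar f^{-1}$ is a binomial, but it is a binomial supported on the exponents $q^2$ and $q^5$ (indeed $(-\delta^{\tau q^5}x^{q^2}+x^{q^5})\circ\bar f=(1-\delta^{\tau(q^2+q^5)})x$, so the inverse of this particular binomial \emph{is} a scalar multiple of a binomial — your remark that it is not one is what leads you astray). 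Lemma \ref{D_6} is stated only for polynomials supported on $\{q,q^4\}$, so it cannot be applied to the pair $(g,ch)$, and your plan ends by invoking ``the norm relation of Lemma \ref{D_6}'' for a polynomial to which that lemma does not apply. To close this you need either to pass to the adjoint: $\hat h_1(x)=x^q+(-\theta^{\tau q^5})^{q^4}x^{q^4}$ (up to the scalar) together with $L_{h_1}=L_{\hat h_1}$ from Lemma \ref{le:LS_adjoint}, which converts the problem back to one between two binomials of the original shape — this is exactly what the paper does — or to re-derive the power-sum identity of Lemma \ref{D_6} for the support $\{q^2,q^5\}$. As written, the $b\neq0$ case does not go through.

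The second gap is in the ``if'' direction. Lemma \ref{lem:s_1,s_2} carries the hypotheses $\delta,\theta\in\F_{q^{2m}}^*$ and $N_{q^{2m}/q^m}(\delta)\neq1$, $N_{q^{2m}/q^m}(\theta)\neq1$, whereas Theorem \ref{th:LP_n_6} imposes no such restriction (cf.\ Remark \ref{re:F6_equiv}: $f$ and $g$ need not be scattered). When $0\neq N_{q^6/q^3}(\delta)=N_{q^6/q^3}(\theta)^\sigma=1$ your appeal to that lemma is vacuous, and one must argue directly, as the paper does: choose $d$ with $d^{q^2+q+1}=(\delta/\theta^\tau)^{q^2}$, check $d^{(q^3+1)(q^2+q+1)}=1$ so that $d=\alpha^{q-1}$, and substitute $x\mapsto\alpha x$ to exhibit $L_f=L_{d\bar g}$. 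Finally, a small point: in the $b=0$ computation you cancel $(\delta^\tau)^{1+q^2+q^4}$, which silently assumes $\delta\neq0$; the degenerate cases $\delta=0$ or $\theta=0$ should be disposed of first, as the paper does at the start of its proof.
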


\begin{proof}
	If $\theta=\delta=0$,  the above conclusion clearly holds. Hence, we can assume that at least one of $\theta$ and $\delta$ is not equal to $0$.

	First,  we prove the necessity of \eqref{s_1,s_2_6}. 
	
	Assume that $L_f$ and $L_g$ are equivalent which implies the existence of invertible matrix $M =\begin{pmatrix}
	a & b \\ 
	c & d
	\end{pmatrix} $ over $\F_{q^6}$ and $\tau\in\Aut(\F_{q^3})$ such that
	\begin{equation}\label{eq:LP_equivalence_n_6}
	\left\{ x^{q-1}+\delta x^{q^4-1} : x\in \F_{q^6}^*  \right\} = \left\{ \frac{cx+d\bar{g}(x)}{ax+b\bar{g}(x)}: x\in \F_{q^6}^*  \right\},
	\end{equation}
	where $\bar{g}(x)=x^{q}+\theta^\tau x^{q^4} $.
	
	Depending on the value of $b$, we separate the rest part into two cases.
	
	\medskip
	
	\noindent\textbf{Case 1:} $b=0$.  As $M$ is viewed as an element in $\PGL(2,q^6)$, we can assume that $a=1$ and \eqref{eq:LP_equivalence_n_6} becomes
	\begin{equation}\label{eq:LP_equi_n=6}
	\left\{ x^{q-1}+\delta x^{q^4-1}  : x\in \F_{q^6}^*  \right\} = \left\{ c+d\frac{\bar{g}(x)}{x}: x\in \F_{q^6}^*  \right\}.
	\end{equation}
	By \eqref{eq:a_0_b_0} in Lemma \ref{lem1}, we derive $c=0$.
	Then \eqref{eq:LP_equi_n=6} becomes
	$$	\left\{ x^{q-1}+\delta x^{q^4-1}  : x\in \F_{q^6}^*  \right\}=	\left\{d\left(x^{q-1}+\theta^\tau x^{q^4-1}\right)  : x\in \F_{q^6}^*  \right\}.$$
	By \eqref{eq:k_2_n-1} with $k=2$ in Lemma \ref{lem1}, we get 
	\begin{equation}
	\label{eq:b_0_1}
	d^{q^2+q+1}\theta^{\tau q^2}=\delta^{q^2}.
	\end{equation}
	If  exactly one of $\theta$ and $\delta$ equals $0$,  then it has to be $\delta$ and $d$ must be $0$. However, this implies $M$ is singular which leads to a contradiction. Hence $\theta$ and $\delta\neq 0$.
	By \eqref{eq:(2,0)_1,1} in Lemma \ref{D_6}, we get 
	\begin{equation}
	\label{eq:b_0_2}
	d^{(q^2-q+1)(q^2+q+1)}=\frac{\delta^{q^4+q^2+1}}{\theta^{\tau\left(q^4+q^2+1\right)}}.
	\end{equation}
	Then by \eqref{eq:b_0_1} and \eqref{eq:b_0_2}, we get 
	\begin{equation*}
	\delta^{q^3+1}=\left(\theta^{q^3+1}\right)^\tau,
	\end{equation*}
let $\sigma=\tau|_{\F_{q^3}}$, then
	\begin{equation*}
	N_{q^{6}/{q^3}}(\delta) = N_{q^{6}/{q^3}}(\theta )^\sigma.
	\end{equation*}
	
	\medskip
	\noindent\textbf{Case 2:} $b\neq 0$. Without loss of generality, we assume that $b=1$. By Lemma \ref{d=0}, we get $a=d=0$, which means
	$$M=\begin{pmatrix}0 & 1   \\ c & 0 \end{pmatrix}.$$
	
	Let $h_0(x)=-\theta^{\tau q^5}x^{q^2}+x^{q^5}$, then $h_0(\bar{g}(x))=\left(1-\theta^{\tau(q^2+q^5)}\right)x$. 
	
	If  $N_{q^{6}/{q^3}}(\theta )=1$, which is equivalent to $1-\theta^{\tau(q^2+q^5)}=0$, then the map $x\mapsto g(x)$ must be not invertible. By \eqref{eq:LP_equivalence_n_6},  the point $\langle (0,1)\rangle_{\F_{q^{6}}}$ would be in $L_f$ which is impossible. Hence  $N_{q^{6}/{q^3}}(\theta )\neq 1 $.
	
	Let $$h(x)=\frac{h_0(x)}{1-\theta^{\tau(q^2+q^5)}}=-\frac{\theta^{\tau q^5}x^{q^2}}{1-\theta^{\tau(q^2+q^5)}}+\frac{x^{q^5}}{1-\theta^{\tau(q^2+q^5)}}.$$
	Then $h(\bar{g}(x))=x$, i.e. $h(x)$ is the inverse map of $\bar{g}(x)$.  Thus \eqref{eq:LP_equivalence_n_6} equals 
	\begin{equation}\label{eq:LP_U_6_b_1_2}
	\left\{  x^{q-1}+\delta x^{q^4-1} :x\in \mathbb{F}_{q^6}^*\right\}=	\left\{\frac{ch(x)}{x}:x\in \mathbb{F}_{q^6}^*\right\}.
	\end{equation}
	By a simple calculation, the adjoint $\hat{h}_1(x)$ of $h_1(x)$ is
	$$\hat{h}_1(x)= x^q + (-\theta^{q^5\tau})^{q^4}x^{q^4},$$ where $h_1(x)=h(x)(1-\theta^{\tau(q^2+q^5)})$.	
	By Lemma \ref{le:LS_adjoint}, $L_{h_1}=L_{\hat{h}_1}$, then \eqref{eq:LP_U_6_b_1_2} equals
	\begin{equation}\label{eq:LP_U_6_b_1_3}
	\left\{  x^{q-1}+\delta x^{q^4-1} :x\in \mathbb{F}_{q^6}^*\right\}=\left\{ \hat{c} \left(x^{q-1}+(-\theta^{q^5\tau})^{q^4} x^{q^4-1}\right) :x\in \mathbb{F}_{q^6}^*\right\},
	\end{equation}
	where $\hat{c}=\frac{c}{1-\theta^{\tau(q^2+q^5)}}$.
	
	By \eqref{eq:k_2_n-1} in Lemma \ref{lem1}, we get 
	\begin{equation}
	\label{eq:b_1_1}
	\hat{c}^{q^2+q+1}(-\theta^{q^5\tau})=\delta^{q^2}.
	\end{equation}
	If  exactly one of $\theta$ and $\delta$ equals $0$,  then it has to be $\delta$ and $c$ must be $0$ which means $M$ is singular. However, this contradicts our assumption on $M$. Hence $\theta$ and $\delta\neq 0$.
	By \eqref{eq:(2,0)_1,1} in Lemma \ref{D_6}, we get 
	\begin{equation}
	\label{eq:b_1_2}
	\hat{c}^{(q^2-q+1)(q^2+q+1)}=\frac{\delta^{q^4+q^2+1}}{(-\theta^{q^5\tau})^{q^4\left(q^4+q^2+1\right)}}.
	\end{equation}
	Then by \eqref{eq:b_1_1} and \eqref{eq:b_1_2}, we get 
	\begin{equation*}
	\delta^{q^3+1}=\left(-\theta^{q^5\tau}\right)^{q^4(q^3+1)},
	\end{equation*}
	let $\sigma=\tau|_{\F_{q^3}}$, then
	\begin{equation*}
	N_{q^{6}/{q^3}}(\delta) = N_{q^{6}/{q^3}}(\theta )^\sigma.
	\end{equation*}
	Therefore, we have finished the proof of the necessity of \eqref{s_1,s_2_6}. 
	
	Next, we prove the sufficiency of \eqref{s_1,s_2_6}. Assume that 
	$	N_{q^{6}/{q^3}}(\delta) = N_{q^{6}/{q^3}}(\theta )^\tau$ and $\theta, \delta \neq 0$.
	
	We only have to prove that there exists an invertible matrix $M$ such that \eqref{eq:LP_equivalence_n_6} holds. Let  $M =\begin{pmatrix}
	1 & 0 \\ 
	0 & d
	\end{pmatrix} $ where $d$ satisfies \eqref{eq:b_0_1}. Hence $$d^{(q^3+1)(q^2+q+1)}= \left(\frac{\delta}{\theta^\tau}\right)^{q^2(q^3+1)}=1.$$
	and there exists $\alpha \in \F_{q^6}^*$ such that $d=\alpha^{q-1}$. Replacing $x$ by $\alpha x$, we obtain
	\begin{equation}\label{eq:N_1}
	L_f=\left\{ (\alpha x)^{q-1}+\delta (\alpha x)^{q^4-1} : x\in \F_{q^6}^*  \right\} = \left\{ \alpha^{q-1}\left(x^{q-1}+ \alpha^{q^4-q} \delta x^{q^4-1}\right): x\in \F_{q^6}^*  \right\}.
	\end{equation}	
	As $\alpha^{q^4-q} \delta=d^{q(q^2+q+1)} \delta=\frac{\delta^{q^3+1}}{\theta^{q^3\tau}}=\theta^\tau$ and $d=\alpha^{q-1}$, \eqref{eq:N_1} is equivalent to
	\begin{equation*}
	L_f = \left\{ d\left(x^{q-1}+ \theta^\tau x^{q^4-1}\right): x\in \F_{q^6}^*  \right\}=L_{d\bar{g}}.\qedhere
	\end{equation*}
\end{proof}

\begin{remark}
	In fact, if $N_{q^6/{q^3}}(\delta ) = N_{q^6/{q^3}}(\theta )^\sigma \neq 1$, we can use Lemma \ref{lem:s_1,s_2} directly to prove the sufficient condition in Theorem \ref{th:LP_n_6}.
\end{remark}

\begin{theorem}\label{th:Aut_6}
		Let $f(x)=x^q+\theta x^{q^4}$ over $\F_{q^6}$ with $\theta\neq 0$. 
		The automorphism group of $L_f$ is		
	\begin{align*}
			\Aut(L_f)=\begin{cases}
				\mathcal{D}, &N_{q^6/{q^3}}(\theta )=1;\\
				\mathcal{D}\cup\mathcal{C}, & N_{q^6/{q^3}}(\theta )\neq 1,
			\end{cases}
		\end{align*}
	where 	$$\mathcal{D}=\left\{M\tau : 	N_{q^{6}/{q^3}}(\theta)^{\tau-1} = 1, M=
	\begin{pmatrix}
		1 & 0 \\ 
		0 & d
	\end{pmatrix}\in\mathrm{PGL}(2,q^6), \tau\in \mathrm {Aut}(\F_{q^6}),	d^{q^2+q+1}=\left(\frac{\theta}{\theta^\tau} \right)^{q^2}\right\},$$
and 	$$\mathcal{C}=\left\{	M\tau : 
	N_{q^{6}/{q^3}}(\theta)^{\tau -1}=1, 	M=\begin{pmatrix}
		0 & 1 \\ 
		c & 0
	\end{pmatrix}\in\mathrm{PGL}(2,q^6), \tau\in \mathrm {Aut}(\F_{q^6}), \left(\frac{c}{1-\theta^{\tau(q^2+q^5)}}\right)^{q^2+q+1}=\frac{\theta^{q^2}}{-\theta^{\tau q^5}}\right\}.$$
	\end{theorem}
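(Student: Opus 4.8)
The plan is to translate the condition that $M\tau\in\PGamL(2,q^6)$ is an automorphism of $L_f$ into the set identity
\[
\left\{ \frac{f(x)}{x} : x\in \F_{q^6}^*  \right\} = \left\{\frac{cx+d\bar{f}(x)}{ax+b\bar{f}(x)}:x\in \F_{q^6}^*\right\},
\]
where $M=\begin{pmatrix} a & b \\ c & d\end{pmatrix}$ and $\bar{f}(x)=x^q+\theta^\tau x^{q^4}$. This is precisely the equivalence equation of Theorem \ref{th:LP_n_6} with $g$ replaced by the Frobenius-twisted polynomial $\bar{f}$, so I would reuse that machinery essentially verbatim, splitting the argument according to whether $b=0$ or $b\neq 0$.

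For the case $b=0$, I normalize $a=1$. Equation \eqref{eq:a_0_b_0} of Lemma \ref{lem1} forces $c=0$, and \eqref{eq:k_2_n-1} with $k=2$ gives $d^{q^2+q+1}\theta^{\tau q^2}=\theta^{q^2}$, that is $d^{q^2+q+1}=(\theta/\theta^\tau)^{q^2}$. Combining this with \eqref{eq:(2,0)_1,1} of Lemma \ref{D_6} (now specialized to $\delta=\theta$) yields $N_{q^6/q^3}(\theta)^{\tau-1}=1$. These are exactly the defining relations of $\mathcal{D}$. Conversely, the substitution $x\mapsto\alpha x$ with $d=\alpha^{q-1}$ used in the sufficiency half of Theorem \ref{th:LP_n_6} shows that every such $M\tau$ fixes $L_f$, so the diagonal part of $\Aut(L_f)$ is exactly $\mathcal{D}$.

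For the case $b\neq 0$, I normalize $b=1$ and invoke Lemma \ref{d=0} (with $s_1=s_2=1$, $m=3$, $\delta=\theta$ and the lemma's $\theta$ equal to $\theta^\tau$) to conclude $a=d=0$, so $M=\begin{pmatrix} 0 & 1 \\ c & 0\end{pmatrix}$. As in Case 2 of Theorem \ref{th:LP_n_6}, the map $x\mapsto\bar{f}(x)$ must be invertible; the same kernel computation, via $h_0(\bar{f}(x))=(1-\theta^{\tau(q^2+q^5)})x$, shows this happens exactly when $N_{q^6/q^3}(\theta)\neq 1$. Hence $\mathcal{C}=\varnothing$ whenever $N_{q^6/q^3}(\theta)=1$, which gives the first branch of the statement. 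When $N_{q^6/q^3}(\theta)\neq 1$, I pass to the inverse $h$ of $\bar{f}$, rewrite the right-hand set as $\{ch(x)/x\}$, and apply the adjoint identity $L_{h_1}=L_{\hat{h}_1}$ of Lemma \ref{le:LS_adjoint} exactly as in that proof. This reduces the antidiagonal configuration to the already-solved diagonal one and produces the relations defining $\mathcal{C}$, namely $N_{q^6/q^3}(\theta)^{\tau-1}=1$ together with $\left(c/(1-\theta^{\tau(q^2+q^5)})\right)^{q^2+q+1}=\theta^{q^2}/(-\theta^{\tau q^5})$.

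The main obstacle I expect is the bookkeeping in the case $b\neq 0$: I must track the normalizing constant $\hat{c}=c/(1-\theta^{\tau(q^2+q^5)})$ through the adjoint reduction and check that the coefficient conditions coming from Lemmas \ref{lem1} and \ref{D_6} combine to the stated norm relation without admitting extraneous solutions. A secondary point requiring care is the sufficiency direction for $\mathcal{C}$, i.e.\ that the listed conditions genuinely produce automorphisms rather than merely being necessary; I would establish this by composing the antidiagonal $M$ with a diagonal element of $\mathcal{D}$ and using that the adjoint step is reversible, so that the two families together close up to a group.
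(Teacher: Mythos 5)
Your proposal follows the paper's own proof essentially step for step: the paper likewise reruns Theorem \ref{th:LP_n_6} with $\delta=\theta$, splits on $b=0$ versus $b\neq 0$, extracts the conditions on $d$ (resp.\ $\hat c$) and $\tau$ from Lemmas \ref{lem1} and \ref{D_6}, uses Lemma \ref{d=0} and the adjoint identity of Lemma \ref{le:LS_adjoint} in the antidiagonal case, and proves sufficiency in both cases via the substitution $x\mapsto \alpha x$ with $d=\alpha^{q-1}$ (resp.\ $\hat c=\alpha^{q-1}$). The plan is correct and matches the paper's argument.
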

	\begin{proof}
			We only have to follow the proof of Theorem \ref{th:LP_n_6} under the assumption that $\theta=\delta$.
				 Depending on the value of $b$, we consider two cases.
				
				\medskip
				
				\noindent\textbf{Case 1:} $b=0$.
				When $b=0$ in \eqref{eq:LP_equivalence_n_6}, we always have $a=1$ and $c=0$. Hence, we just need to determine $d\in \F_{q^6}$ and $\tau\in \Aut(\F_{q^6})$ such that $L_{d\bar{f}}=L_f$ where $\bar{f}(x)=x^{q}+\theta^\tau x^{q^4}$. By \eqref{eq:b_0_1} and \eqref{eq:b_0_2}, if $L_{d\bar{f}}=L_f$, then $d,\tau$ satisfy
				\begin{equation}
					\label{eq:aut_6_b_0_1}	d^{q^2+q+1}=\left(\frac{\theta}{\theta^\tau}\right)^{q^2},
				\end{equation}
				and 
				\begin{equation}
					\label{eq:aut_6_b_0_2}
					N_{q^{6}/{q^3}}(\theta) = N_{q^{6}/{q^3}}(\theta )^\tau.
				\end{equation}
					Thus  \eqref{eq:aut_6_b_0_1} and \eqref{eq:aut_6_b_0_2} are  the necessary condition.	
					
					Next, we show that \eqref{eq:aut_6_b_0_1} and \eqref{eq:aut_6_b_0_2} are also sufficient. By \eqref{eq:aut_6_b_0_1} and \eqref{eq:aut_6_b_0_2}, we get 
					$$d^{(q^3+1)(q^2+q+1)}=\left(\frac{\theta}{\theta^\tau}\right)^{q^2(q^3+1)}=N_{q^{6}/{q^3}}\left(\frac{\theta}{\theta^\tau}\right)^{q^2}=1.$$
						Hence there exists $\alpha \in \F_{q^6}^*$ such that $d=\alpha^{q-1}$. Replacing $x$ by $\alpha x$, we obtain
					\begin{equation}
						\label{eq:alpha_2}
						\left\{ (\alpha x)^{q-1}+\theta (\alpha x)^{q^4-1} : x\in \F_{q^6}^*  \right\} = \left\{ \alpha^{q-1}\left(x^{q-1}+ \alpha^{q^4-q} \theta x^{q^4-1}\right): x\in \F_{q^6}^*  \right\}.
					\end{equation}	
					As $\alpha^{q^4-q} \theta=d^{q(q^2+q+1)} \theta=\frac{\theta^{q^3+1}}{\theta^{q^3\tau}}=\theta^\tau$, \eqref{eq:alpha_2} is equivalent to
				\begin{equation*}
					\left\{ (\alpha x)^{q-1}+\theta (\alpha x)^{q^4-1} : x\in \F_{q^6}^*  \right\} = \left\{ d\left(x^{q-1}+ \theta^\tau x^{q^4-1}\right): x\in \F_{q^6}^*  \right\}.
				\end{equation*}

			\medskip
			
			\noindent\textbf{Case 2:} $b\neq0$.
				When $b\neq 0$ in \eqref{eq:LP_equivalence_n_6}, we have $N_{q^{6}/{q^3}}(\theta)\neq 1$, $b=1$, and $a=d=0$. Hence, we just need to determine $c\in \F_{q^6}$ and $\tau\in \Aut(\F_{q^6})$ such that $L_{ch}=L_f$ where \[h(x)=-\frac{\theta^{\tau q^5}x^{q^2}}{1-\theta^{\tau(q^2+q^5)}}+\frac{x^{q^5}}{1-\theta^{\tau(q^2+q^5)}}.\]
				By \eqref{eq:b_1_1} and \eqref{eq:b_1_2}, if $L_{ch}=L_f$, then $c,\tau$ satisfy
				\begin{equation}
					\label{eq:aut_6_b_1_1}
					\hat{c}^{q^2+q+1}=\frac{\theta^{q^2}}{-\theta^{q^5\tau}},
				\end{equation}
				where $\hat{c}=\frac{c}{1-\theta^{\tau(q^2+q^5)}}$, and 
				\begin{equation}
					\label{eq:aut_6_b_1_2}
					N_{q^{6}/{q^3}}(\theta) = N_{q^{6}/{q^3}}(\theta )^\tau.
				\end{equation}
				Hence  \eqref{eq:aut_6_b_1_1} and \eqref{eq:aut_6_b_1_2} are  the necessary condition.
				
				Next, we show that \eqref{eq:aut_6_b_1_1} and \eqref{eq:aut_6_b_1_2} are also sufficient.
				By \eqref{eq:LP_U_6_b_1_2}, \eqref{eq:LP_U_6_b_1_3} and $\delta=\theta$, we only need to proof that the sufficiency for \eqref{eq:LP_U_6_b_1_3} with $\delta=\theta$.
				By \eqref{eq:aut_6_b_1_1} and \eqref{eq:aut_6_b_1_2}, we get 
		$$\hat{c}^{(q^3+1)(q^2+q+1)}=\left(\frac{\theta^{q^2}}{-\theta^{q^5\tau}}\right)^{q^3+1}=\frac{N_{q^{6}/{q^3}}(\theta)^{q^2} }{N_{q^{6}/{q^3}}(\theta )^{q^2\tau}}=1.$$
		Hence there exists $\alpha \in \F_{q^6}^*$ such that $\hat{c}=\alpha^{q-1}$. Replacing $x$ by $\alpha x$, we obtain
		\begin{equation}
			\label{eq:alpha_3}
			\left\{ (\alpha x)^{q-1}+\theta (\alpha x)^{q^4-1} : x\in \F_{q^6}^*  \right\} = \left\{ \alpha^{q-1}\left(x^{q-1}+ \alpha^{q^4-q} \theta x^{q^4-1}\right): x\in \F_{q^6}^*  \right\}.
		\end{equation}
		As $\alpha^{q^4-q} \theta=\hat{c}^{q(q^2+q+1)} \theta=\frac{\theta^{q^3+1}}{(-\theta^{q^5\tau})^q}=(-\theta^{q^5\tau})^{q^4}$, \eqref{eq:alpha_3} is equivalent to
		\begin{equation*}
			\left\{ (\alpha x)^{q-1}+\theta (\alpha x)^{q^4-1} : x\in \F_{q^6}^*  \right\} = \left\{ \hat{c}\left(x^{q-1}+ (-\theta^{q^5\tau})^{q^4} x^{q^4-1}\right): x\in \F_{q^6}^*  \right\}.\qedhere
		\end{equation*}
	\end{proof}
	
	\begin{remark}\label{re:F6_equiv}
		It is worth pointing out that $L_f$ and $L_g$ are not required to be  maximum scattered in Theorem \ref{th:LP_n_6} and Theorem \ref{th:Aut_6}. In another word, we have determined the equivalence and the automorphism groups of the members in a larger family of linear sets in $\PG(1,q^6)$ which contains the family $\CFA$ of maximum scattered ones discovered by Csjab\'ok, Marino, Polverino and Zanella in \cite{csajbok_classes_2018}.
	\end{remark}
	
	Next we turn to the investigation of equivalence problem for the second family $\CFB$ constructed in \cite{csajbok_classes_2018}.
	\begin{theorem}\label{th:LP_n_8}
		Let $f(x)=x^{q}+\delta x^{q^5}$ and $g(x)=x^{q}+\theta x^{q^5}$. 
		The linear sets $L_f$ and $L_g$ are 
		$\PGamL-$equivalent is if and only if  
		\begin{equation}\label{s_1,s_2_8}
			N_{q^8/{q^4}}(\delta ) = N_{q^8/{q^4}}(\theta )^\sigma,
		\end{equation}
		for some automorphism $\sigma\in \mathrm{Aut}(\F_{q^{4}})$.
	\end{theorem}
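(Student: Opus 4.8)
The plan is to transplant the proof of Theorem~\ref{th:LP_n_6} to $\PG(1,q^8)$, using Lemma~\ref{D_8} in place of Lemma~\ref{D_6}. After disposing of the trivial case $\theta=\delta=0$ (both sets are of pseudoregulus type and \eqref{s_1,s_2_8} reads $0=0$), I would assume that at least one of $\theta,\delta$ is nonzero. For the necessity, an equivalence realised by $M=\begin{pmatrix}a&b\\c&d\end{pmatrix}$ and $\tau\in\Aut(\F_{q^8})$ translates into
\[ \left\{x^{q-1}+\delta x^{q^5-1}:x\in\F_{q^8}^*\right\}=\left\{\frac{cx+d\bar g(x)}{ax+b\bar g(x)}:x\in\F_{q^8}^*\right\},\qquad \bar g(x)=x^q+\theta^\tau x^{q^5}, \]
and I would split the argument according to whether $b=0$ or $b\neq 0$, exactly as before.

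If $b=0$, I normalise $a=1$; then \eqref{eq:a_0_b_0} forces $c=0$, leaving $L_f=L_{d\bar g}$. Feeding the coefficients $a_1=1,a_5=\delta$ and $b_1=d,b_5=d\theta^\tau$ into the two identities of Lemma~\ref{D_8} (and ruling out $\theta=0$ or $\delta=0$, which would make $M$ singular) yields, with $w:=\delta/\theta^\tau$,
\[ d^{E_1}=w^{q^3},\qquad d^{E_2}=w^{q^6+q^3+q},\qquad E_1=q^3+q^2+q+1,\ E_2=q^6+q^3+q+1. \]
The decisive observation is the factorisation $E_2=(q^3-q^2+1)E_1$: raising the first relation to the power $q^3-q^2+1$ eliminates $d$ and forces $w^{q^5+q}=1$, hence $w^{q^4+1}=1$ since $\gcd(q,q^8-1)=1$. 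This is precisely $N_{q^8/q^4}(\delta)=N_{q^8/q^4}(\theta^\tau)=N_{q^8/q^4}(\theta)^\sigma$ with $\sigma=\tau|_{\F_{q^4}}$, which is \eqref{s_1,s_2_8}.

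If $b\neq0$, I normalise $b=1$ and invoke Lemma~\ref{d=0} with $m=4,\ s_1=s_2=1$ to force $a=d=0$, so $M=\begin{pmatrix}0&1\\c&0\end{pmatrix}$ and the right-hand set becomes $\{cx/\bar g(x)\}$. As in Theorem~\ref{th:LP_n_6}, one must have $N_{q^8/q^4}(\theta)\neq1$, otherwise $\bar g$ is singular and $\langle(0,1)\rangle\in L_f$, a contradiction. I would then compute the compositional inverse of $\bar g$: eliminating $x^{q^5}$ between $\bar g(x)$ and $\bar g(x)^{q^4}=(\theta^\tau)^{q^4}x^q+x^{q^5}$ produces the binomial $h(x)=\Delta^{-q^3}\bigl(x^{q^7}-(\theta^\tau)^{q^7}x^{q^3}\bigr)$ with $\Delta=1-(\theta^\tau)^{1+q^4}$. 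Its adjoint (Lemma~\ref{le:LS_adjoint}) returns the exponents $q^3,q^7$ to $q^5,q$, giving $\hat h_1(x)=x^q-(\theta^\tau)^{q^4}x^{q^5}$, a binomial with the same exponents as $f$; using $L_{h_1}=L_{\hat h_1}$ I reduce to $L_f=L_{\hat c'\hat h_1}$ and re-run the Case~1 elimination with $w'=\delta/(-(\theta^\tau)^{q^4})$. Since $(-1)^{1+q^4}=1$ for every $q$, the same computation gives $N_{q^8/q^4}(w')=1$, again \eqref{s_1,s_2_8}.

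For the sufficiency, assuming \eqref{s_1,s_2_8} and (after the trivial case) $\theta,\delta\neq0$, I would exhibit a diagonal equivalence $M=\begin{pmatrix}1&0\\0&d\end{pmatrix}$: the norm condition places $\theta^\tau/\delta$ in $\ker N_{q^8/q^4}$, which is exactly the image of $\alpha\mapsto\alpha^{q^5-q}$, so there exists $\alpha\in\F_{q^8}^*$ with $\alpha^{q^5-q}=\theta^\tau/\delta$; setting $d=\alpha^{q-1}$ and substituting $x\mapsto\alpha x$ then turns $L_f$ into $L_{d\bar g}$, exactly as in the sufficiency part of Theorem~\ref{th:LP_n_6}. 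I expect the main obstacle to be the necessity: isolating the index-$q^4$ norm (rather than the coarser full norm $N_{q^8/q}$) from the two identities of Lemma~\ref{D_8}, which hinges entirely on the factorisation $E_2=(q^3-q^2+1)E_1$. The most calculation-heavy point is Case~2, where one must invert $\bar g$ explicitly and push its coefficients through the adjoint to recover a binomial with the original exponents before the same elimination applies.
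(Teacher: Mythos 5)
Your proposal is correct and follows essentially the same route as the paper's proof: the trivial case, the $b=0$/$b\neq 0$ split with Lemma \ref{lem1} and Lemma \ref{d=0}, the elimination of $d$ (resp.\ $\hat c$) via the two identities of Lemma \ref{D_8} and the factorisation $q^6+q^3+q+1=(q^3-q^2+1)(q^3+q^2+q+1)$, the explicit inverse of $\bar g$ and its adjoint to reduce Case 2 to Case 1, and the diagonal matrix for sufficiency. All the computational details you supply (the form of $h$, the adjoint $\hat h_1(x)=x^q-\theta^{\tau q^4}x^{q^5}$, and the identification of $\ker N_{q^8/q^4}$ with the image of $\alpha\mapsto\alpha^{q^5-q}$) match the paper's argument.
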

	\begin{proof}
			If $\theta=\delta=0$,  the above conclusion is clearly correct. Hence, we can assume that at least one of $\theta$ and $\delta$ is not equal to $0$.
	
	First,  we prove the necessity of \eqref{s_1,s_2_8}.
	
	Assume that $L_f$ and $L_g$ are equivalent which implies the existence of invertible matrix $M =\begin{pmatrix}
	a & b \\ 
	c & d
	\end{pmatrix} $ over $\F_{q^8}$ and $\tau\in\Aut(\F_{q^4})$ such that
	\begin{equation}\label{eq:LP_equivalence_n_8}
	\left\{ x^{q-1}+\delta x^{q^5-1} : x\in \F_{q^8}^*  \right\} = \left\{ \frac{cx+d\bar{g}(x)}{ax+b\bar{g}(x)}: x\in \F_{q^{2m}}^*  \right\},
	\end{equation}
	where $\bar{g}(x)=x^{q}+\theta^\tau x^{q^5}$.
	
	Next, we investigate \eqref{eq:LP_equivalence_n_8} in two different cases.
	
	\medskip
	
	\noindent\textbf{Case 1:} $b=0$.  As $M$ is viewed as an element in $\PGL(2,q^8)$, we can assume that $a=1$.
	By \eqref{eq:a_0_b_0} in Lemma \ref{lem1}, we derive $c=0$.
	Then \eqref{eq:LP_equivalence_n_8} becomes
	$$	\left\{ x^{q-1}+\delta x^{q^5-1}  : x\in \F_{q^8}^*  \right\}=	\left\{d\left(x^{q-1}+\theta^\tau x^{q^5-1}\right)  : x\in \F_{q^8}^*  \right\}.$$
	By \eqref{eq:(3,2,1,0)_1,1} in Lemma \ref{D_8}, we get 
	\begin{equation}
	\label{eq:b_0_1_n_8}
	d^{1+q+q^2+q^3}\theta ^{\tau q^3}=\delta^{q^3},
	\end{equation}
	If  exactly one of $\theta$ and $\delta$ equals $0$,  then it has to be $\delta$ and $d$ must be $0$. However, this implies $M$ is singular which leads to a contradiction. Hence $\theta$ and $\delta\neq 0$.
	
	By \eqref{eq:(3,2,0)(3,2,1,0)_1,1} in Lemma \ref{D_8}, 
	we get 	
	\begin{equation}
	\label{eq:b_0_2_n_8}
	d^{(q^3-q^2+1)(1+q+q^2+q^3)}=\frac{\delta^{q^6+q^3+q}}{\theta ^{\tau (q^6+q^3+q)}}.
	\end{equation}
	Then by \eqref{eq:b_0_1_n_8} and \eqref{eq:b_0_2_n_8}, we get 
	$$\delta^{q^5+q}=\theta^{\tau (q^5+q)},$$
let $\sigma=\tau|_{\F_{q^4}}$, then
	$$	N_{q^{8}/{q^4}}(\delta)= N_{q^{8}/{q^4}}(\theta )^{\tau}.$$
	
	\medskip
	\noindent\textbf{Case 2:} $b\neq 0$. Without loss of generality, we assume that $b=1$. By Lemma \ref{d=0}, we get $a=d=0$ which implies 
	$$M=\begin{pmatrix}0 & 1   \\ c & 0 \end{pmatrix}.$$
	
	Let $h_0(x)=-\theta^{\tau q^7}x^{q^3}+x^{q^7}$, then $h_0(\bar{g}(x))=\left(1-\theta^{\tau(q^3+q^7)}\right)x$. 
	
	If $N_{q^{8}/{q^4}}(\theta )=1$ which is equivalent to $1-\theta^{\tau(q^3+q^7)}=0$, then the map $x\mapsto g(x)$ must be not invertible. By \eqref{eq:LP_equivalence_n_8},  the point $\langle (0,1)\rangle_{\F_{q^{8}}}$ would be in $L_f$ which is impossible. Hence  $N_{q^{8}/{q^4}}(\theta )\neq 1 $.
	
	Let $$h(x)=-\frac{\theta^{\tau q^7}x^{q^3}}{1-\theta^{\tau(q^3+q^7)}}+\frac{x^{q^7}}{1-\theta^{\tau(q^3+q^7)}}.$$
	Then $h(\bar{g}(x))=x$, i.e. $h(x)$ is the inverse map of $\bar{g}(x)$ with $\bar{g}(x)=x^q+\theta^\tau x^{q^5}$.
	Thus \eqref{eq:LP_equivalence_n_8} equals 
	\begin{equation}\label{eq:b_1_1_n_8}
	\left\{ x^{q-1}+\delta x^{q^5-1} :x\in \mathbb{F}_{q^8}^*\right\}=\left\{\frac{ch(x)}{x}:x\in \mathbb{F}_{q^8}^*\right\}.
	\end{equation}
	By a simple calculation, the adjoint $\hat{h}_1(x)$ of $h_1(x)$ is
	$$\hat{h}_1(x)= x^q + (-\theta^{q^7\tau})^{q^5}x^{q^5},$$ where $h_1(x)=(1-\theta^{\tau(q^3+q^7)})h(x)$.
	
	By Lemma \ref{le:LS_adjoint}, $L_{h_1}=L_{\hat{h}_1}$, \eqref{eq:b_1_1_n_8} equals
	\begin{equation}\label{eq:b_1_2_n_8}
	\left\{  x^{q-1}+\delta x^{q^5-1} :x\in \mathbb{F}_{q^8}^*\right\}=\left\{  \hat{c}\left(x^{q-1}+(-\theta^{q^7\tau})^{q^5} x^{q^5-1}\right) :x\in \mathbb{F}_{q^8}^*\right\},
	\end{equation}
	where $\hat{c}=\frac{c}{1-\theta^{\tau(q^3+q^7)}}$.
	
	By \eqref{eq:(3,2,1,0)_1,1} in Lemma \ref{D_8}, we get 
	\begin{equation}
	\label{eq:b_1_3_n_8}
	\hat{c}^{1+q+q^2+q^3}(-\theta^{q^7\tau})=\delta^{q^3},
	\end{equation}
	If one of $\theta$ and $\delta$ equals $0$,  then it has to be $\delta$ from which it follows $\hat{c}=0$. It contradicts  the assumption that $M$ is invertible. Hence $\theta$ and  $\delta$ must be nonzero.
	
	Furthermore, by \eqref{eq:(3,2,0)(3,2,1,0)_1,1} in Lemma \ref{D_8}, 
	we get 	
	\begin{equation}
	\label{eq:b_1_4_n_8}
	\hat{c}^{(q^3-q^2+1)(1+q+q^2+q^3)}=\frac{\delta^{q^6+q^3+q}}{(-\theta^{q^7\tau})^{q^5 (q^6+q^3+q)}}.
	\end{equation}
	Then by \eqref{eq:b_1_3_n_8} and \eqref{eq:b_1_4_n_8}, we get 
	$$\delta^{q^5+q}=(-\theta^{q^7\tau})^{q^5(q^5+q)},$$
let $\sigma=\tau|_{\F_{q^4}}$, then
	\begin{equation*}
	N_{q^{8}/{q^4}}(\delta)= N_{q^{8}/{q^4}}(\theta )^{\tau}.
	\end{equation*}
	Therefore, we have finished the proof of the necessity of \eqref{s_1,s_2_8}.
	
	Now, we prove the sufficiency of \eqref{s_1,s_2_8}. Assume that 
	$N_{q^{8}/{q^4}}(\delta)= N_{q^{8}/{q^4}}(\theta )^{\tau}$ with $\theta, \delta \neq 0$.
	
	We only have to prove that there exists an invertible matrix $M$ such that \eqref{eq:LP_equivalence_n_8} holds. Let  $M =\begin{pmatrix}
	1 & 0 \\ 
	0 & d
	\end{pmatrix} $ where $d$ satisfies \eqref{eq:b_0_1_n_8}. Hence $$d^{(q^4+1)(q^3+q^2+q+1)}= \left(\frac{\delta}{\theta^\tau}\right)^{q^3(q^4+1)}=1.$$
	and there exists $\alpha \in \F_{q^8}^*$ such that $d=\alpha^{q-1}$. Replacing $x$ by $\alpha x$, we obtain
	\begin{equation}\label{eq:N_2}
	L_f=\left\{ (\alpha x)^{q-1}+\delta (\alpha x)^{q^5-1} : x\in \F_{q^8}^*  \right\} = \left\{ \alpha^{q-1}\left(x^{q-1}+ \alpha^{q^5-q} \delta x^{q^5-1}\right): x\in \F_{q^8}^*  \right\}.
	\end{equation}	
	As $\alpha^{q^5-q} \delta=d^{q(q^3+q^2+q+1)} \delta=\frac{\delta^{q^3+1}}{\theta^{q^3\tau}}=\theta^\tau$ and $d=\alpha^{q-1}$, \eqref{eq:N_2} is equivalent to
	\begin{equation*}
	L_f = \left\{ d\left(x^{q-1}+ \theta^\tau x^{q^5-1}\right): x\in \F_{q^8}^*  \right\}=L_{d\bar{g}}.\qedhere
	\end{equation*}
	\end{proof}

	\begin{theorem}\label{th:Aut_8}
		Let $f(x)=x^q+\theta x^{q^5}$ over $\F_{q^8}$ with $\theta\neq 0$. 
	 	The automorphism group of $L_f$ is 
	\begin{align*}
				\Aut(L_f)=\begin{cases}
					\mathcal{D}, &N_{q^8/{q^4}}(\theta )=1;\\
					\mathcal{D}\cup\mathcal{C}, & N_{q^8/{q^4}}(\theta )\neq 1,
				\end{cases}
	\end{align*}
	where 	$$\mathcal{D}=\left\{M\tau : 	N_{q^{8}/{q^4}}(\theta)^{\tau-1} = 1, M=
		\begin{pmatrix}
			1 & 0 \\ 
			0 & d
		\end{pmatrix}\in\mathrm{PGL}(2,q^8), \tau\in \mathrm {Aut}(\F_{q^8}),	d^{q^3+q^2+q+1}=\left(\frac{\theta}{\theta^\tau} \right)^{q^3}\right\},$$
		and
		$$\mathcal{C}=\left\{	M\tau : 
		N_{q^8/{q^4}}(\theta)^{\tau -1}=1, 	M=\begin{pmatrix}
			0 & 1 \\ 
			c & 0
		\end{pmatrix}\in\mathrm{PGL}(2,q^8), \tau\in \mathrm {Aut}(\F_{q^8}), \left(\frac{c}{1-\theta^{\tau(q^3+q^7)}}\right)^{q^3+q^2+q+1}=\frac{\theta^{q^3}}{-\theta^{\tau q^7}}\right\}.$$
	\end{theorem}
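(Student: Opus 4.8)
The plan is to specialize the proof of Theorem \ref{th:LP_n_8} to the case $\delta=\theta$ and $g=f$, exactly as Theorem \ref{th:Aut_6} specializes Theorem \ref{th:LP_n_6}. Indeed, an element $M\tau\in\PGamL(2,q^8)$ lies in $\Aut(L_f)$ precisely when it realizes a $\PGamL$-equivalence of $L_f$ with itself, i.e.\ when it satisfies \eqref{eq:LP_equivalence_n_8} with $\delta$ replaced by $\theta$ and $\bar g$ replaced by $\bar f(x)=x^q+\theta^\tau x^{q^5}$. I would split the analysis according to whether the entry $b$ of $M$ vanishes, and show that the two cases produce precisely the subgroups $\mathcal{D}$ and $\mathcal{C}$.

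For Case 1 ($b=0$), normalizing $a=1$ and invoking \eqref{eq:a_0_b_0} forces $c=0$, so the problem reduces to finding all $d$ and $\tau$ with $L_{d\bar f}=L_f$. Reading off \eqref{eq:b_0_1_n_8} and \eqref{eq:b_0_2_n_8} with $\delta=\theta$ yields the two necessary relations $d^{q^3+q^2+q+1}=(\theta/\theta^\tau)^{q^3}$ and $N_{q^8/q^4}(\theta)=N_{q^8/q^4}(\theta)^\tau$, which are exactly the conditions defining $\mathcal{D}$. For sufficiency I would combine these relations to obtain $d^{(q^4+1)(q^3+q^2+q+1)}=1$, hence $d=\alpha^{q-1}$ for some $\alpha\in\F_{q^8}^*$; then substituting $x\mapsto\alpha x$ and using $\alpha^{q^5-q}\theta=\theta^\tau$ identifies $L_f$ with $L_{d\bar f}$, just as in \eqref{eq:N_2}.

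For Case 2 ($b\neq0$) I would normalize $b=1$ and apply Lemma \ref{d=0} to force $a=d=0$, so $M=\begin{pmatrix}0&1\\c&0\end{pmatrix}$; this case can occur only when $N_{q^8/q^4}(\theta)\neq1$, since otherwise $x\mapsto\bar f(x)$ is non-invertible and $\langle(0,1)\rangle$ would lie in $L_f$. Forming the inverse $h$ of $\bar f$ and passing to the adjoint via Lemma \ref{le:LS_adjoint} reduces \eqref{eq:b_1_1_n_8} to \eqref{eq:b_1_2_n_8}; then \eqref{eq:b_1_3_n_8} and \eqref{eq:b_1_4_n_8} with $\delta=\theta$ give the necessary relations $\hat c^{q^3+q^2+q+1}=\theta^{q^3}/(-\theta^{\tau q^7})$, where $\hat c=c/(1-\theta^{\tau(q^3+q^7)})$, together with $N_{q^8/q^4}(\theta)=N_{q^8/q^4}(\theta)^\tau$, matching the definition of $\mathcal{C}$. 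Sufficiency then follows by deducing $\hat c^{(q^4+1)(q^3+q^2+q+1)}=1$, writing $\hat c=\alpha^{q-1}$, and substituting $x\mapsto\alpha x$ in \eqref{eq:b_1_2_n_8}.

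I expect the only delicate step to be the sufficiency in Case 2: unlike Case 1, it is not a direct substitution but requires first replacing $\bar f$ by the adjoint $\hat h_1$ of a scalar multiple of its inverse, so that the normalization argument of Case 1 can be re-applied to the resulting binomial $x^q+(-\theta^{q^7\tau})^{q^5}x^{q^5}$. Tracking how the norm condition $N_{q^8/q^4}(\theta)^{\tau-1}=1$ interacts with the Frobenius-power bookkeeping hidden in $\hat c$ is the part most likely to conceal a sign or exponent slip, but it is otherwise routine given Lemmas \ref{D_8} and \ref{le:LS_adjoint}.
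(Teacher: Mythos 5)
Your proposal is correct and follows essentially the same route as the paper: the paper likewise specializes the proof of Theorem \ref{th:LP_n_8} to $\delta=\theta$, splits on whether $b=0$, extracts the conditions \eqref{eq:aut_8_b_0_1}--\eqref{eq:aut_8_b_0_2} and \eqref{eq:aut_8_b_1_1}--\eqref{eq:aut_8_b_1_2} as necessary, and proves sufficiency by writing $d$ (resp.\ $\hat c$) as $\alpha^{q-1}$ and substituting $x\mapsto\alpha x$, with the Case 2 reduction to \eqref{eq:b_1_2_n_8} via the adjoint of the inverse map exactly as you describe.
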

	\begin{proof}
			We only have to follow the proof of Theorem \ref{th:LP_n_8} under the assumption that $\theta=\delta$.
		Depending on the value of $b$, we separate the rest part into two cases.
		
		\medskip
		
		\noindent\textbf{Case 1:} $b=0$.
		When $b=0$ in \eqref{eq:LP_equivalence_n_8}, we always have $a=1$ and $c=0$. Hence, we just need to determine $d\in \F_{q^8}$ and $\tau\in \Aut(\F_{q^8})$ such that $L_{d\bar{f}}=L_f$ where $\bar{f}(x)=x^{q}+\theta^\tau x^{q^5}$. By \eqref{eq:b_0_1_n_8} and \eqref{eq:b_0_2_n_8}, if $L_{d\bar{f}}=L_f$, then $d,\tau$ satisfy
		\begin{equation}
			\label{eq:aut_8_b_0_1}	d^{q^3+q^2+q+1}=\left(\frac{\theta}{\theta^\tau}\right)^{q^3},
		\end{equation}
		and 
		\begin{equation}
			\label{eq:aut_8_b_0_2}
			N_{q^{8}/{q^4}}(\theta) = N_{q^{8}/{q^4}}(\theta )^\tau.
		\end{equation}
		Then  \eqref{eq:aut_8_b_0_1} and \eqref{eq:aut_8_b_0_2} are  the necessary condition.
		
		Next, we show that \eqref{eq:aut_8_b_0_1} and \eqref{eq:aut_8_b_0_2} are also sufficient. By \eqref{eq:aut_8_b_0_1} and \eqref{eq:aut_8_b_0_2}, we get 
		$$d^{(q^4+1)(q^3+q^2+q+1)}=\left(\frac{\theta}{\theta^\tau}\right)^{q^3(q^4+1)}=N_{q^{8}/{q^4}}\left(\frac{\theta}{\theta^\tau}\right)^{q^3}=1.$$
		Hence there exists $\alpha \in \F_{q^8}^*$ such that $d=\alpha^{q-1}$. Replacing $x$ by $\alpha x$, we obtain
		\begin{equation}
			\label{eq:alpha_4}
			\left\{ (\alpha x)^{q-1}+\theta (\alpha x)^{q^5-1} : x\in \F_{q^8}^* \right\} = \left\{ \alpha^{q-1}\left(x^{q-1}+ \alpha^{q^5-q} \theta x^{q^5-1}\right): x\in \F_{q^8}^*  \right\}.
		\end{equation}	
		As $\alpha^{q^5-q} \theta=d^{q(q^3+q^2+q+1)} \theta=\frac{\theta^{q^4+1}}{\theta^{q^4\tau}}=\theta^\tau$, \eqref{eq:alpha_4} is equivalent to
		\begin{equation*}
			\left\{ (\alpha x)^{q-1}+\theta (\alpha x)^{q^5-1} : x\in \F_{q^8}^*  \right\} = \left\{ d\left(x^{q-1}+ \theta^\tau x^{q^5-1}\right): x\in \F_{q^8}^*  \right\}.
		\end{equation*}

		\medskip
		
		\noindent\textbf{Case 2:} $b\neq0$.
		When $b\neq 0$ in \eqref{eq:LP_equivalence_n_8}, we have $N_{q^{8}/{q^4}}(\theta)\neq 1$, $b=1$, and $a=d=0$. Hence, we just need to determine $c\in \F_{q^8}$ and $\tau\in \Aut(\F_{q^8})$ such that $L_{ch}=L_f$ where  $h(x)=-\frac{\theta^{\tau q^7}x^{q^3}}{1-\theta^{\tau(q^3+q^7)}}+\frac{x^{q^7}}{1-\theta^{\tau(q^3+q^7)}}$. By \eqref{eq:b_1_3_n_8} and \eqref{eq:b_1_4_n_8}, if $L_{ch}=L_f$, then $c,\tau$ satisfy
		\begin{equation}
			\label{eq:aut_8_b_1_1}
			\hat{c}^{q^3+q^2+q+1}=\frac{\theta^{q^3}}{-\theta^{q^7\tau}},
		\end{equation}
		where $\hat{c}=\frac{c}{1-\theta^{\tau(q^3+q^7)}}$, and 
		\begin{equation}
			\label{eq:aut_8_b_1_2}
			N_{q^{8}/{q^4}}(\theta) = N_{q^{8}/{q^4}}(\theta )^\tau.
		\end{equation}
		Thus \eqref{eq:aut_8_b_1_1} and \eqref{eq:aut_8_b_1_2} are  the necessary condition.	
		
		Next, we show that \eqref{eq:aut_8_b_1_1} and \eqref{eq:aut_8_b_1_2} are also sufficient.
		By \eqref{eq:b_1_1_n_8}, \eqref{eq:b_1_2_n_8} and $\delta=\theta$, we only need to proof that the sufficiency for \eqref{eq:b_1_2_n_8} with $\delta=\theta$.
		By \eqref{eq:aut_8_b_1_1} and \eqref{eq:aut_8_b_1_2}, we get 
		$$\hat{c}^{(q^4+1)(q^3+q^2+q+1)}=\left(\frac{\theta^{q^3}}{-\theta^{q^7\tau}}\right)^{q^4+1}=\frac{N_{q^{8}/{q^4}}(\theta)^{q^3} }{N_{q^{8}/{q^4}}(\theta )^{q^3\tau}}=1.$$
		Hence there exists $\alpha \in \F_{q^8}^*$ such that $\hat{c}=\alpha^{q-1}$. Replacing $x$ by $\alpha x$, we obtain
		\begin{equation}
			\label{eq:alpha_5}
			\left\{ (\alpha x)^{q-1}+\theta (\alpha x)^{q^5-1} : x\in \F_{q^8}^*  \right\} = \left\{ \alpha^{q-1}\left(x^{q-1}+ \alpha^{q^5-q} \theta x^{q^5-1}\right): x\in \F_{q^8}^*  \right\}.
		\end{equation}
		As $\alpha^{q^5-q} \theta=\hat{c}^{q(q^3+q^2+q+1)} \theta=\frac{\theta^{q^4+1}}{(-\theta^{q^7\tau})^q}=(-\theta^{q^7\tau})^{q^5}$, \eqref{eq:alpha_5} is equivalent to
		\begin{equation*}
			\left\{ (\alpha x)^{q-1}+\theta (\alpha x)^{q^5-1} : x\in \F_{q^8}^*  \right\} = \left\{ \hat{c}\left(x^{q-1}+ (-\theta^{q^7\tau})^{q^5} x^{q^5-1}\right): x\in \F_{q^8}^*  \right\}.
		\end{equation*}
	\end{proof}

	\begin{remark}
		As pointed our in Remark \ref{re:F6_equiv} on the equivalence problem of family $\CFA$,   we also have determined the equivalence and the automorphism groups of the members in a larger family of linear sets in $\PG(1,q^8)$ which contains $\CFB$, because $f$ and $g$ in Theorem \ref{th:LP_n_8} and Theorem \ref{th:Aut_8} are not required to be scattered.
	\end{remark}
	\section{Concluding remarks}\label{sec:5}
	In this paper, we consider the equivalence of the members in the family of maximum scattering linear sets constructed by Csjab\'ok, Marino, Montanucci and Zullo in \cite{csajbok_newMSLS_2018,marino_classes_2020} and the same problem for two families $\CFA$ and $\CFB$ constructed by Csjab\'ok, Marino, Polverino and Zanella in \cite{csajbok_classes_2018}. Their automorphism groups are determined, respectively. 
	
	For the Csjab\'ok-Marino-Montanucci-Zullo family, it is proved that there is only one element in it up to $\PGamL$-equivalence, and its automorphism group is determined. For the Csjab\'ok-Marino-Polverino-Zanella family, according to the result in \cite{csajbok_classes_2018}, we only need to consider the linear set equivalence problem for $s=1$ in $\CF_{2m}$ for which the necessary and sufficient conditions for their $\PGamL$-equivalence are given. The approach adopted in this paper for the maximum scattered linear set can be used to solve the equivalence problems of other linear sets which will be considered in our future work.
	
	\section*{Acknowledgment}
	This work is partially supported by the Natural Science	Foundation of Hunan Province (No.\ 2019RS2031) and the Training Program for Excellent Young  Innovators of Changsha (No.\ kq2106006).

\end{document}